\definecolor{citation}{rgb}{0.2,0.58,0.2} 
\definecolor{formula}{rgb}{0.1,0.2,0.6}
\definecolor{url}{rgb}{0.3,0,0.5}
\DeclareMathOperator*{\osc}{osc}
\DeclareMathOperator*{\tail}{Tail}
\definecolor{darkgreen}{rgb}{0.00, 0.50, 0.00}
\newcommand{\nocontentsline}[3]{}
\newcommand{\tocless}[2]{\bgroup\let\addcontentsline=\nocontentsline#1{#2}\egroup}
\title[Riesz potential estimates for mixed problems]{Riesz potential estimates\\ for mixed local-nonlocal problems with measure data}
\author{Iwona Chlebicka}\address{Iwona Chlebicka \\
Institute of Applied Mathematics and Mechanics, University of Warsaw \\ ul. Banacha 2, 02-097 Warsaw, Poland\\  \texttt{e-mail: i.chlebicka@mimuw.edu.pl}} 
\author{Kyeong Song}\address{Kyeong Song\\ School of Mathematics, Korea Institute for Advanced Study \\ Seoul 02455, Republic of Korea \\ \texttt{e-mail: kyeongsong@kias.re.kr}}
\author{Yeonghun Youn}\address{Yeonghun Youn\\ Department of Mathematics,
Yeungnam University, Gyeongsan 38541, Republic of Korea\\ \texttt{e-mail: yeonghunyoun@yu.ac.kr}}
\author{Anna Zatorska-Goldstein}\address{Anna Zatorska-Goldstein\\
Institute of Applied Mathematics and Mechanics, University of Warsaw \\ ul. Banacha 2, 02-097 Warsaw, Poland\\ \texttt{e-mail: azator@mimuw.edu.pl}}
\date{}
\begin{document}

\thanks{{\it Mathematics Subject Classification 2020}: 31C45, 35B65, 35R06
\vspace{1mm}}
\maketitle \sloppy

\thispagestyle{empty}

\belowdisplayskip=18pt plus 6pt minus 12pt \abovedisplayskip=18pt
plus 6pt minus 12pt\
\parskip 4pt plus 1pt
\parindent 0pt

\newcommand{\ic}[1]{\textcolor{teal}{#1}}

\def\tens#1{\pmb{\mathsf{#1}}}
\def\mean#1{\mathchoice%
          {\mathop{\kern 0.2em\vrule width 0.6em height 0.69678ex depth -0.58065ex
                  \kern -0.8em \intop}\nolimits_{\kern -0.4em#1}}%
          {\mathop{\kern 0.1em\vrule width 0.5em height 0.69678ex depth -0.60387ex
                  \kern -0.6em \intop}\nolimits_{#1}}%
          {\mathop{\kern 0.1em\vrule width 0.5em height 0.69678ex
              depth -0.60387ex
                  \kern -0.6em \intop}\nolimits_{#1}}%
          {\mathop{\kern 0.1em\vrule width 0.5em height 0.69678ex depth -0.60387ex
                  \kern -0.6em \intop}\nolimits_{#1}}}
          
\newcommand{\dv}{{\rm div}}
\def\aI{\texttt{(a1)}}
\def\aII{\texttt{(a2)}}
\newcommand{\opA}{{\mathcal{ A}}}
\newcommand{\opAo}{{\opA_o}}
\newcommand{\bopA}{{\bar{\opA}}}
\newcommand{\wt}{\widetilde}
\newcommand{\ve}{\varepsilon}
\newcommand{\vp}{\varphi}
\newcommand{\vt}{\vartheta}
\newcommand{\vr}{\varrho}
\newcommand{\vk}{\varkappa}
\newcommand{\pa}{\partial}
\newcommand{\cW}{{\mathcal{W}}}
\newcommand{\supp}{{\rm supp}}
\newcommand{\bk}[1]{{B^{#1}}}
\newcommand{\loc}{{\mathrm{loc}}}
\newcommand{\mfs}{{\mathfrak{s}}}

\def\R{{\mathbb{R}}}
\def\N{{\mathbb{N}}}
\def\rp{{[0,\infty)}}
\def\r{{\mathbb{R}}}
\def\n{{\mathbb{N}}}
\def\l{{\mathbf{l}}}
\def\bu{{\bar{u}}}
\def\bg{{\bar{g}}}
\def\bG{{\bar{G}}}
\def\ba{{\bar{a}}}
\def\bv{{\bar{v}}}
\def\wtgamma{{\wt\gamma}} 
\def\calV{{\mathcal{V}}} 
\def\sa{{s_{\rm app}}}
\def\cL{{\mathcal{L}}}
\def\dx{{\,\mathrm{d}x}}
\def\dy{{\,\mathrm{d}y}}
\def\ds{{\,\mathrm{d}s}}
\def\dtau{{\,\mathrm{d}\tau}}
\def\dt{{\,\mathrm{d}t}}

\def\cI{{\mathcal{I}}} 
\def\cB{{\mathcal{B}}}

\def\tedv{{\tens{\dv}}}
\def\temu{{\tens{\mu}}}
\def\btemu{{\overline{\temu}}}
\def\tea{\tens{a}}

\def\tI{\text{I}}
\def\tII{\text{II}}
\def\tIII{\text{III}}
\def\rn{{\mathbb{R}^{n}}}
\def\Rm{{\mathbb{R}^{m}}}
\def\Rn{{\mathbb{R}^{n}}}
\def\id{{\mathsf{Id}}} 
\def\Mb{{\mathcal{M}(\Omega,\Rm)}} 

\newtheorem{coro}{\bf Corollary}[section]
\newtheorem{theo}[coro]{\bf Theorem} 
\newtheorem{lem}[coro]{\bf Lemma}
\newtheorem{rem}[coro]{\bf Remark} 
\newtheorem{defi}[coro]{\bf Definition} 
\newtheorem{ex}[coro]{\bf Example} 
\newtheorem{fact}[coro]{\bf Fact} 
\newtheorem{prop}[coro]{\bf Proposition}

\newcommand{\data}{\textit{\texttt{data}}}


\parindent 1em

\begin{abstract}
We study gradient regularity for 
mixed local-nonlocal  problems modelled upon
\[ -\Delta_p u +(-\Delta_p)^su=\mu\qquad\text{for} \quad 2-\tfrac{1}{n}<p<\infty\quad \text{and}\quad s\in(0,1)\,,\]
where $\mu$ is a bounded Borel measure. We prove pointwise bounds for the gradient $Du$ in terms of the truncated 1-Riesz potential of $\mu$. 
\end{abstract}


\section{Introduction}

In this paper, we investigate mixed local-nonlocal nonlinear problems whose prototype is
\begin{equation}\label{eq:model}
    -\Delta_p u +(-\Delta_p)^su=\mu\qquad\text{for} \quad 2-\frac{1}{n}<p<\infty\quad \text{and}\quad s\in(0,1)\,.
\end{equation}
Here, $\Delta_p$ is the classical $p$-Laplacian, $(-\Delta_p)^s$ is the $s$-fractional $p$-Laplacian defined by
\[ (-\Delta_{p})^{s}u(x) \coloneqq P.V.\int_{\mathbb{R}^{n}}\frac{|u(x)-u(y)|^{p-2}(u(x)-u(y))}{|x-y|^{n+sp}}\dy, \qquad x \in \mathbb{R}^{n}, \]
and $\mu$ is a signed Borel measure with finite total mass. 
We investigate solutions to this kind of problem, which are understood in a properly general sense (see Definition~\ref{def.sola} below). Specifically, we provide pointwise potential estimates and infer fine properties of the gradient of the solutions. 
Such results in the relam of nonlinear potential theory have been extensively studied for various kinds of elliptic problems, see, e.g., \cite{Ba15,BaJDE22,ByunSong,ByYoun,CGZG-Wolff,CKW,CYZG-Wolff,KiMa94,KuMiCV14,KuMiSi,Trudinger-Wang}.  We refer to Sections~\ref{sec:main}--\ref{sec:coro} below for more details on our results.

\subsection{Mixed local-nonlocal operators} The last two decades brought new insight into the theory of mixed problems encompassing both local and nonlocal features. Operators used in such analysis also play a crucial role in probability theory as infinitesimal generators of stochastic processes that display both diffusion and jump. 
The early results on mixed operators, primarily focused on the linear case whose simplest model is $-\Delta + (-\Delta)^{s}$,
were obtained by using probabilistic \cite{CKSV,ChKu2010} and viscosity methods \cite{BarIm}. For various results concerning mixed local-nonlocal linear problems, including regularity, maximum principle, and other qualitative properties, see \cite{BDVV,BDVV23,DPLV,diva} and references therein.

Studies on regularity results for mixed local-nonlocal $p$-Laplacian type operators were initiated in the paper \cite{GarKin22}, where the De Giorgi--Nash--Moser theory was investigated by combining the techniques for the fractional $p$-Laplacian \cite{DKP14,DKP16} with those for the classical $p$-Laplacian. We refer to \cite{Balci,BDVV21,BLS,GarLin23,gauk,Nak} and references therein for further results concerning mixed local-nonlocal nonlinear problems. 
We would like to single out the paper \cite{DFM-mixed} where maximal regularity, including (local) gradient H\"older regularity, was proved for a larger class of mixed problems related to
\begin{equation*} 
-\Delta_{p} + (-\Delta_{\gamma})^{s}
\end{equation*}
with $p,\gamma>1$ and $s\in(0,1)$ satisfying $p > s\gamma$. 
A key idea in \cite{DFM-mixed} is to consider the nonlocal term as a lower order term (due to the conditions on the constants $p,s,\gamma$), which allows one to compare the mixed problem with a local problem to obtain gradient regularity. We also refer to \cite{BKL} for global gradient estimates for the same kind of operators with measurable nonlinearities. 

\subsection{Nonlinear potential theory} 
Nonlinear potential theory is a subject that aims to provide pointwise, oscillation estimates and fine properties of solutions for nonlinear equations, which extend in a most natural way the classical ones valid for linear equations via the representation formula. The fundamental work \cite{KiMa94} initiated the potential theory of degenerate/singular elliptic problems of $p$-Laplacian type; see also \cite{Trudinger-Wang} for a different proof. Specifically, the following pointwise estimate for solutions holds:
\begin{equation}\label{wolff.u}
|u(x_{0})| \lesssim \mathcal{W}^{|\mu|}_{1,p}(x_{0},R) + \mean{B_{R}(x_{0})}|u|\dx \,,
\end{equation}
where the nonlinear Wolff potential of $|\mu|$ is defined  as
\[ \mathcal{W}^{|\mu|}_{\beta,p}(x_{0},R) \coloneqq \int_{0}^{R}\left[\frac{|\mu|(B_{\varrho}(x_{0}))}{\varrho^{n-\beta p}}\right]^{1/(p-1)}\frac{\mathrm{d}\varrho}{\varrho}\,,  \qquad \beta>0 \,. \]
Also, a pointwise lower bound for $u$ via $\mathcal{W}^{|\mu|}_{1,p}$ is available when both $\mu$ and $u$ are nonnegative, which implies the sharpness of \eqref{wolff.u} in the sense that $\mathcal{W}^{|\mu|}_{1,p}$ cannot be replaced by any other smaller potentials.

Pointwise gradient estimates for nonlinear elliptic measure data problems were first obtained in \cite{Min11} for the case $p=2$, whose form is essentially the same as the classical one valid for the Poisson equation. In \cite{DuMiAJM2011}, the gradient bound via $\mathcal{W}^{|\mu|}_{1/p,p}$-Wolff potential was obtained for $p$-Laplacian type equations for $p \ge 2$. See also \cite{KuMiJFA2012} for some oscillation estimates that interpolate between Wolff potential estimates for solutions and their gradient. The paper \cite{DuMiAJM2011} introduced a method employing comparison and excess decay estimates, which is by now standard in gradient potential estimates. 

Surprisingly, in contrast with \eqref{wolff.u}, it was shown in \cite{KuMiARMA2013} that pointwise estimates for the gradient can be actually established via Riesz potentials for $p\geq 2$, which originally appeared in estimates for the linear Poisson equation:
\begin{equation}\label{Riesz.Du} 
|Du(x_{0})| \lesssim \left[\cI^{|\mu|}_{1}(x_{0},R)\right]^{1/(p-1)} + \mean{B_{R}(x_{0})}|Du|\dx \,, 
\end{equation}
where
\begin{equation*}
\cI^{|\mu|}_{1}(x_0,R) \coloneqq \int_0^R  \frac{|\mu|(B_{\varrho}(x_0))}{\varrho^{n-1}}\,\frac{\mathrm{d}\varrho}{\varrho}\,
\end{equation*} 
is the truncated 1-Riesz potential of $|\mu|$. In particular,  
\eqref{Riesz.Du} improves the Wolff potential estimate in \cite{DuMiAJM2011} in the sense that
\[ p \ge 2 \;\; \Longrightarrow \;\; \left[\cI^{|\mu|}_{1}(x_{0},R)\right]^{1/(p-1)} \lesssim \mathcal{W}^{|\mu|}_{1/p,p}(x_{0},2R)\,. \]
Moreover, it turns out that the Riesz potential estimates similar to \eqref{Riesz.Du} hold for any $p>1$, see \cite{DuMiJFA2010,NP23ARMA}. 
As a consequence, various gradient regularity results for the Poisson equation are carried over to those for nonlinear (possibly degenerate) equations of $p$-Laplacian type
.  
Nowadays, such potential theoretic arguments, as well as the original idea of \cite{Min11}, yield a unified approach to finding optimal conditions for Lipschitz estimates in a general class of nonuniformly elliptic and nonautonomous problems \cite{BM20, DM21}.  
We also note that the pointwise estimates \eqref{wolff.u} and \eqref{Riesz.Du} 
have been extended to cover more general local operators with nonstandard growth, see  e.g.~\cite{CGZG-Wolff,CYZG-Wolff} and \cite{Ba15,BaHa,BY17,BY18,CKW}, respectively.  
For a comprehensive overview of nonlinear potential theory for local problems, see \cite{KuMiguide}. 

Nonlinear potential estimates for nonlocal measure data problems involving $s$-fractional $p$-Laplacian type operators were first developed in \cite{KuMiSi,KuMiSi18}. The main outcomes therein include existence results and pointwise upper and lower estimates for solutions via $\mathcal{W}^{|\mu|}_{s,p}$. In particular, the nonlocal contribution of solutions, which was analyzed in the excess decay estimates, eventually results in the appearance of so-called nonlocal tail like 
\[ \left(r^{sp}\int_{\mathbb{R}^{n}\setminus B_{r}(x_{0})}\frac{|u(y)|^{p-1}}{|y-x_{0}|^{n+sp}}\dy\right)^{1/(p-1)} \]
in the final potential estimates. We also mention the paper \cite{DiNo} that establishes oscillation estimates for solutions in terms of fractional maximal functions of the data.  These estimates are analogous to those in \cite{KuMiJFA2012}; thereby, they lead to the Calder\'on--Zygmund type estimates in the scale of fractional Sobolev spaces. 
In \cite{KuNoSi}, first-order differentiability of solutions and gradient potential estimates were proved for a class of nonlocal linear equations with coefficients. 
However, the question of extending such gradient regularity results to fractional $p$-Laplacian type equations still remains open. In fact, even the Lipschitz regularity for the homogeneous fractional $p$-Laplace equation is not known so far. 

As for the case of mixed local-nonlocal equations modelled on \eqref{eq:model}, 
research in \cite{ByunSong} studied the existence and $\mathcal{W}^{|\mu|}_{1,p}$-Wolff potential estimates for solutions. As a consequence, integrability results and continuity criteria for solutions were also obtained. We point out that the results in \cite{ByunSong} reflect both local and nonlocal characters of the mixed operator. Moreover, similarly to \cite{DFM-mixed}, the approach of \cite{ByunSong} considers the local term as a leading term in view of the scaling property, thereby concluding with estimates via $\mathcal{W}^{|\mu|}_{1,p}$. 
This is why the nonlocal tail used in the analysis of mixed problems (see \eqref{Tail} below), set to the scaling property of the local $p$-Laplacian, is slightly different from the above one considered in \cite{KuMiSi}. 

In this paper, by developing the approaches in \cite{ByunSong,DFM-mixed} and combining them with the methods of local nonlinear potential theory, we aim at supplying the nonlinear potential theory for mixed operators with pointwise gradient estimates via Riesz potentials. To our knowledge, our results are the first ones that deal with gradient regularity for mixed local-nonlocal nonlinear problems like \eqref{eq:model} with measure data. We are in a position to show our main accomplishments.

\subsection{Assumptions and results}\label{sec:main}
Let us present in detail our setting. We study the following problem 
\begin{equation}
    \label{eq:main}
\left\{
\begin{aligned}
-\mathrm{div}\,A(Du) + \mathcal{L}u & = \mu & \text{in } & \Omega\,, \\
u & = g & \text{in } & \mathbb{R}^{n}\setminus \Omega\,,
\end{aligned}
\right.
\end{equation}
where $\Omega \subset \mathbb{R}^{n}$ is a bounded domain, {$n\geq 2$}, $g:\mathbb{R}^{n} \rightarrow \mathbb{R}$ is a boundary data and $\mu$ is a signed Borel measure with finite total mass on $\rn$. 
The vector field $A: \mathbb{R}^{n} \to \mathbb{R}^{n}$ is assumed to be $C^{1}$-regular on $\mathbb{R}^{n}$ for $p \ge 2$ and on $\mathbb{R}^{n}\setminus\{0\}$ for $p<2$.
It also satisfies the following growth and ellipticity assumptions:
\begin{equation}\label{growth}
\left\{
\begin{aligned}
|A(z)| + |\partial A(z)| |z| &\le L_A |z|^{p-1}\,,\\
\nu_A |z|^{p-2} |\xi|^{2} &\le \partial A(z) \xi \cdot \xi\,, \\
\end{aligned}
\right.
\end{equation}
for every $z , \xi \in \mathbb{R}^{n}$, where $0<\nu_{A} \le L_{A} <\infty$ are fixed.
The nonlocal operator $\cL(\cdot)$ is defined by 
\begin{equation}
    \label{def:Lu}
    \cL u(x) \coloneqq P.V.\int_{\rn} |u(x)-u(y)|^{p-2}(u(x)-u(y))K(x,y)\dy\,, \qquad x \in \mathbb{R}\,,
\end{equation}
where, as customary, $P.V.$ stands for the Cauchy’s principal value, whereas $K:\rn \times \rn \rightarrow \mathbb{R}$ is a measurable, symmetric kernel satisfying
\begin{equation}\label{kernel.growth}
    \frac{\nu_K}{|x-y|^{n+sp}} \le K(x,y) = K(y,x) \le \frac{L_K}{|x-y|^{n+sp}}\qquad \text{for  a.e. $x,y \in \rn\,$,}
\end{equation}
 where $0<\nu_{K} \le L_{K} <\infty$ are fixed. In this paper, we assume
\begin{equation}\label{s-p-range}
s \in (0,1) \qquad\text{and}\qquad p > 2-\frac{1}{n}\,.
\end{equation}
For the case of referring to the dependence of some quantities on the parameters of the problem, we collect them as \[\data=\data(n,s,p,\nu_{K}, \nu_{A}, L_{K}, L_{A})\,.\] 
Our accomplishment is the following result.
\begin{theo}\label{theo:intro}
Let $u$ be a SOLA to \eqref{eq:main} under assumptions \eqref{growth}--\eqref{s-p-range}. For any $\sigma \in (0,1)$, there exists a positive radius $\bar{R} = \bar R(\data,\sigma) \in (0,1)$ such that the following holds: 
if the truncated Riesz potential $\cI^{|\mu|}_{1}(x_0,R)$ is finite for a ball  $B_{R}(x_{0}) \subset \Omega$ with $R \le \bar{R}$, then $x_0$ is a Lebesgue point of $Du$ in the sense that there exists the precise representative of $Du$ at $x_0$:
\[Du(x_0) \coloneqq \lim_{\vr\to 0} (Du)_{B_\vr(x_0)}  = \lim_{\vr\to 0} \mean{B_{\varrho}(x_{0})} Du \dx.\]
Moreover, there exists a constant $c = c(\data,\sigma)$ such that the following estimate holds with $q_0=\max\{p-1,1\}$:
\begin{align*}
|Du(x_0 )| & \leq c \left[\cI^{|\mu|}_{1} (x_0 ,R)\right]^{1/(p-1)}+c\left(\mean{B_R(x_0)} |Du|^{q_0}\dx\right)^{1/q_0} \notag \\
& \quad + c\left(R^{\sigma}\int_{\rn\setminus B_{R}(x_{0})}\frac{|u(y)-(u)_{B_{R}(x_{0})}|^{p-1}}{|y-x_{0}|^{n+sp}}\dy \right)^{1/(p-1)}\,.
\end{align*} 
\end{theo}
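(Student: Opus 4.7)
The plan is to follow the Duzaar--Mingione gradient potential estimate paradigm of \cite{DuMiJFA2010,KuMiARMA2013,NP23ARMA} adapted to the mixed setting by combining the comparison strategy of \cite{DFM-mixed,ByunSong}, which treats the nonlocal operator $\cL$ as a lower-order perturbation of the local $p$-Laplace type operator, with the Riesz potential iteration of nonlinear potential theory. The argument naturally breaks into four stages: (i) comparison with an $A$-harmonic replacement, (ii) excess decay for this replacement, (iii) a dyadic iteration producing Morrey-type control of the averages $(Du)_{B_\vr(x_0)}$, and (iv) passage to the limit to identify $Du(x_0)$ as a precise representative.

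For stage (i), fix $B_r=B_r(x_0)\subset B_R(x_0)$ and let $v\in u+W^{1,p}_0(B_r)$ solve $-\dv A(Dv)=0$ in $B_r$ with $v=u$ on $\rn\setminus B_r$. Standard $p$-Laplace type comparison arguments, with $\cL u$ absorbed as in \cite{DFM-mixed,ByunSong}, yield with $q_0=\max\{p-1,1\}$
\[
\left(\mean{B_r}|Du-Dv|^{q_0}\dx\right)^{1/q_0} \lesssim \left[\frac{|\mu|(B_r)}{r^{n-1}}\right]^{1/(p-1)} + r^{\sigma/(p-1)}\tail(u;B_r),
\]
where the nonlocal tail enters through testing the weak formulation against $u-v$ and estimating $\cL u$ via \eqref{kernel.growth}. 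Stage (ii) invokes the classical gradient H\"older regularity for homogeneous local $A$-equations: for some $\alpha=\alpha(n,p,\nu_A,L_A)\in(0,1]$,
\[
\mean{B_{\tau r}}|Dv-(Dv)_{B_{\tau r}}|^{q_0}\dx \lesssim \tau^{\alpha q_0}\mean{B_r}|Dv|^{q_0}\dx, \qquad \tau\in(0,1/4].
\]
Combining (i)--(ii), with $E(B_\vr)\coloneqq \bigl(\mean{B_\vr}|Du-(Du)_{B_\vr}|^{q_0}\dx\bigr)^{1/q_0}$, gives the fundamental excess decay
\[
E(B_{\tau r}) \le c\,\tau^\alpha E(B_r) + c\,\tau^{-n/q_0}\!\left\{\left[\frac{|\mu|(B_r)}{r^{n-1}}\right]^{1/(p-1)} + r^{\sigma/(p-1)}\tail(u;B_r)\right\}.
\]

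Stage (iii) chooses $\tau$ so that $c\tau^\alpha \le 1/2$, iterates the above along the dyadic scales $B_{\tau^k R}(x_0)$, and telescopes $|(Du)_{B_{\tau^{k+1}R}}-(Du)_{B_{\tau^k R}}|\lesssim E(B_{\tau^k R})$. The local contributions form a discrete Riemann sum bounded by $[\cI^{|\mu|}_1(x_0,R)]^{1/(p-1)}$, while the nonlocal tails at each scale must be relocated to the single tail at $B_R(x_0)$ through a comparison lemma of the form $\tail(u;B_\vr)\lesssim \tail(u;B_R)+(\text{sum of intermediate excesses})$, the excesses being reabsorbed on the left. Finiteness of $\cI^{|\mu|}_1(x_0,R)$ forces the telescoping series to converge, so $\{(Du)_{B_\vr(x_0)}\}$ is Cauchy, $Du(x_0)$ equals its limit, and the claimed pointwise bound is read off the geometric sum, the zeroth term contributing the initial average $\bigl(\mean{B_R}|Du|^{q_0}\dx\bigr)^{1/q_0}$.

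The main obstacle I anticipate is exactly the stage (iii) bookkeeping. The nonlocal tail does not share the scaling of the local Riesz contribution, so the iteration is not a clean geometric series. The parameter $\sigma\in(0,1)$ in the statement encodes precisely the slack required to absorb intermediate-scale tails: the factor $r^{\sigma/(p-1)}$ kills an arbitrarily small power at each dyadic step, ensuring summability of the nonlocal remainders at the price of a mild, $\sigma$-dependent loss. A secondary technical point is unifying the nondegenerate ($p\ge 2$) and singular ($2-\tfrac{1}{n}<p<2$) regimes through the exponent $q_0=\max\{p-1,1\}$; the lower bound $p>2-\tfrac{1}{n}$ is exactly what guarantees that SOLA have gradients in $L^{q_0}_\loc$ so that $E(B_\vr)$ is meaningful throughout the iteration.
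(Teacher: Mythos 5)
Your four-stage plan correctly identifies the Duzaar--Mingione paradigm (comparison, excess decay, dyadic iteration, telescoping to a Lebesgue point) that the paper follows, and the passage to the limit in stage~(iv) is sound once stage~(iii) converges. However, the iteration as you set it up fails to deliver a Riesz potential estimate when $p>2$; it only gives the weaker Wolff potential bound. Your stage~(iii) telescopes $|(Du)_{B_{\tau^{k+1}R}}-(Du)_{B_{\tau^kR}}|\lesssim E(B_{\tau^kR})$ and your decay inequality feeds in the measure at each scale through $[\,|\mu|(B_{\tau^kR})/(\tau^kR)^{n-1}\,]^{1/(p-1)}$. Summing over $k$ this produces
\[
\sum_k \left[\frac{|\mu|(B_{\tau^kR}(x_0))}{(\tau^kR)^{n-1}}\right]^{1/(p-1)} \approx \mathcal{W}^{|\mu|}_{1/p,p}(x_0,R)\,,
\]
and when $p>2$ the exponent $1/(p-1)$ is strictly less than one, so this sum \emph{dominates} $[\cI^{|\mu|}_1(x_0,R)]^{1/(p-1)}$ rather than being bounded by it; the ``discrete Riemann sum'' only collapses to a single Riesz potential when $1/(p-1)\ge 1$, i.e.\ when $p\le 2$. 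This is exactly the obstruction that \cite{KuMiARMA2013} overcame by linearization. The paper, following \cite{AvKuMi,BSY}, runs the entire decay/comparison machinery for the quantity $A(Du)$ rather than $Du$ in the superquadratic regime: Section~\ref{sec:alternatives} sets up a two-scale degenerate/non-degenerate alternative, culminating in Lemma~\ref{lem:A-comp}, where the measure enters \emph{linearly} as $|\mu|(B_{Mr})/(Mr)^{n-1}$; summation over scales now gives $\cI^{|\mu|}_1$, and the $(p-1)$-th root is taken only at the very end when passing from $|A(Du)|$ to $|Du|$. Without this ingredient your proposal does not prove Theorem~\ref{theo:intro} for $p>2$.

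Two smaller discrepancies from the paper's route are also worth flagging. First, you compare $u$ with a local $A$-harmonic replacement in a single step; the paper proceeds in two steps, first $u$ versus a solution $v$ of the \emph{homogeneous mixed} equation (so the nonlocal parts cancel and only the measure survives, Lemmas~\ref{lem:mix-comp1}--\ref{lem:mix-comp2}), and only then $v$ versus a local $A$-harmonic $w$ (Lemma~\ref{mix.loc.comp}). This decoupling is what cleanly separates the measure contribution from the nonlocal tail, and it also sidesteps the fact that a SOLA lacks the $W^{s,p}$ regularity needed to test the nonlocal term directly. Second, your one-step comparison drops the extra term $\chi_{\{p<2\}}[\,|\mu|(B_r)/r^{n-1}\,]^{q}\big(\mean{B_r}|Du|^{q}\dx\big)^{2-p}$ of Lemma~\ref{lem:mix-comp2}, which is precisely the source of the second summand in~\eqref{theo:main-est2} and which must be absorbed by Young's inequality when deducing Theorem~\ref{theo:intro}. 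Likewise, ``the excesses being reabsorbed on the left'' understates the tail bookkeeping: the tail decay of Lemma~\ref{lem:u-tail} carries factors of order $(r/\rho)^{n+p}$, and the paper controls them only through a uniform induction over all scales with carefully tuned reference quantities $\lambda_A$, $\lambda_u$ in Section~\ref{sec.pf}, not by a one-line absorption.
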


In fact, the above pointwise estimate is obtained as a consequence of the following oscillation estimates.
\begin{theo}\label{theo:main}
Under the assumptions of Theorem~\ref{theo:intro}, 
there exist constants $c \ge 1$ and $\kappa \in (0,1)$, both depending only on $\data$ and $\sigma$, such that the following estimates hold:
\begin{enumerate} [(i)]
\item If $p \geq 2$, then 
\begin{align}\label{theo:main-est1}
    |A(Du(x_{0})) - (A(Du))_{B_{R}(x_{0})}|  & \le c \cI^{|\mu|}_{1}(x_{0},R)+c\mean{B_{R}(x_{0})}|A(Du)-(A(Du))_{B_{R}(x_{0})}|\dx \notag\, \\
    & \quad + c R^{\kappa} \mean{B_{R}(x_0)} |A(Du)| \dx + cR^{\sigma}\int_{\rn\setminus B_{R}(x_{0})}\frac{|u(y)-(u)_{B_{R}(x_{0})}|^{p-1}}{|y-x_{0}|^{n+sp}}\dy\,.
\end{align}
\item If $2-1/n < p < 2$, then 
\begin{align}\label{theo:main-est2}
|Du(x_{0})-(Du)_{B_{R}(x_{0})}| & \le  c\left[\cI^{|\mu|}_{1}(x_{0},R)\right]^{1/(p-1)}+ c\left[\cI^{|\mu|}_{1}(x_{0},R)\right]\left(\mean{B_{R}(x_{0})}|Du|\dx\right)^{2-p} \notag\\
& \quad +c\mean{B_{R}(x_{0})}|Du-(Du)_{B_{R}(x_{0})}|\dx+ cR^{\kappa}\mean{B_{R}(x_{0})}|Du|\dx \notag \\
& \quad + c\left(R^{\sigma}\int_{\rn\setminus B_{R}(x_{0})}\frac{|u(y)-(u)_{B_{R}(x_{0})}|^{p-1}}{|y-x_{0}|^{n+sp}}\dy \right)^{1/(p-1)} \,.
\end{align}
\end{enumerate} 
\end{theo}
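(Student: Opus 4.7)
The plan is to prove both oscillation estimates via the comparison-and-iteration scheme pioneered in \cite{DuMiAJM2011,KuMiARMA2013}, adapted to the mixed setting following \cite{ByunSong,DFM-mixed}. First I would freeze scales: on a generic ball $B_r = B_r(x_0) \subset B_R(x_0)$, introduce the homogeneous local comparison $v$ solving $-\dv A(Dv) = 0$ in $B_r$ with $v = u$ on $\pa B_r$. Since the local $p$-Laplacian scales like $r^{-1}$ whereas the fractional $p$-Laplacian scales like $r^{-s}$ with $s<1$, the nonlocal term can be treated as a bounded lower-order source, and quantitative comparison estimates should yield
\[
\mean{B_r} |Du - Dv|^{q_0}\dx \lesssim \left[\frac{|\mu|(B_r)}{r^{n-1}}\right]^{q_0/(p-1)} + (\text{properly scaled nonlocal tail})\,,
\]
with an analogous bound for $|A(Du)-A(Dv)|$ when $p \geq 2$. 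These comparison estimates are the mixed-operator analogue of the ones underlying the local theory, and should largely follow by combining energy methods for the $p$-Laplacian with the tail bookkeeping developed in \cite{ByunSong}.

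Next I would invoke the known decay for the reference map $v$: its gradient is locally H\"older continuous (and Lipschitz in the case $p\geq 2$), so for any $\tau \in (0,1)$,
\[
\mean{B_{\tau r}} |Dv - (Dv)_{B_{\tau r}}|\dx \lesssim \tau^\alpha \mean{B_r}|Dv - (Dv)_{B_r}|\dx\,,
\]
with the analogous statement for $A(Dv)$. Combined with the comparison bound, this produces an excess decay inequality on the nested pair $B_r \supset B_{\tau r}$, with an excess functional involving either $\mean{B_r}|Du-(Du)_{B_r}|\dx$ or $\mean{B_r}|A(Du)-(A(Du))_{B_r}|\dx$, plus an additive perturbation of order $|\mu|(B_r)/r^{n-1}$ and a suitably scaled mixed tail term.

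Iterating this excess decay on a dyadic sequence $r_k = \tau^k R$ is the engine of the proof. For $p \geq 2$ I would iterate in the variable $A(Du)$: the monotonicity $|A(z_1)-A(z_2)|\simeq(|z_1|+|z_2|)^{p-2}|z_1-z_2|$ makes the telescoping clean, and the geometric summation $\sum_k |\mu|(B_{r_k})/r_k^{n-1}\lesssim \cI^{|\mu|}_1(x_0,R)$ produces the leading Riesz term in \eqref{theo:main-est1}. The $R^\kappa\mean{B_R(x_0)}|A(Du)|\dx$ term comes from first-step absorption of the local energy, while the last nonlocal contribution accumulates as the non-iteratable residue of the $\cL$-perturbation. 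For $2-1/n<p<2$ I would instead iterate $Du$ directly, since the singular vector field $A$ is not regular enough to telescope at the zero set of $Du$; the singular comparison estimates carry a factor $(\mean|Du|\dx)^{2-p}$, which is precisely why the cross term $\cI^{|\mu|}_1(x_0,R)(\mean|Du|\dx)^{2-p}$ appears in \eqref{theo:main-est2}, and the restriction $p > 2-\tfrac{1}{n}$ secures $q_0/(p-1)\geq 1$ so that the comparison estimates close.

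The main obstacle will be controlling the nonlocal interplay across scales. Unlike in the purely local case of \cite{KuMiARMA2013}, at each iteration step the $\cL$-perturbation splits into an inner piece (absorbed by the comparison with $v$) and an outer tail, and the latter is \emph{not} simply the original tail of $u$ at scale $R$: it involves $u$ over all intermediate annuli, and one must quantify how these assemble into the single tail written at scale $R$ in the statement. Gaining a small factor $r^\sigma$ at each step is what drives the geometric summability across scales, and this is the reason for the free parameter $\sigma\in(0,1)$ and the factor $R^\sigma$ in the final tail term. A secondary technical point is that the whole scheme must be run in the SOLA framework: one first derives the estimate for energy solutions with smooth truncated data uniformly in the approximation parameter, and then passes to the limit by standard stability.
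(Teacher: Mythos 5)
Your overall strategy skeleton is right (comparison $\to$ excess decay $\to$ dyadic iteration $\to$ geometric summation producing $\cI^{|\mu|}_1$, with a cross term in the singular case, and a final SOLA approximation), but the choice of comparison map creates two gaps that are not cosmetic, and you also omit the mechanism behind the Riesz scaling for $p\geq 2$.

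First, you compare $u$ directly with the \emph{local} $p$-harmonic replacement $v$ ($-\dv A(Dv)=0$ in $B_r$, $v=u$ on $\partial B_r$). Testing $u-v$ against the two equations then leaves the entire nonlocal term $\iint |u(x)-u(y)|^{p-2}(u(x)-u(y))(h(x)-h(y))K$ as a source, with no cancellation. Estimating this requires a $W^{s,p}$ (or at least a clean $W^{1,p}$) energy bound on $u$, which a SOLA — and even the approximating weak solutions with measure right-hand side — does not possess in a scale-invariant form: the Caccioppoli inequality for $u$ carries an extra $|\mu|\cdot\sup|u-k|/r^n$ contribution. The paper circumvents this with a two-step comparison: $u$ is compared to the \emph{homogeneous mixed} solution $v$ (so the nonlocal terms of $u$ and $v$ cancel, leaving only the measure, cf.\ Lemma~\ref{lem:mix-comp2}), and the local $p$-harmonic $w$ enters only as a comparison for $v$ (Lemma~\ref{mix.loc.comp}), where the required energy and sup estimates for $v$ (Lemmas~\ref{lem:bdd}, \eqref{CCP1.ineq}) are available since $v$ solves a homogeneous equation.

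Second, you correctly identify the tail bookkeeping as the main obstacle, but you have no mechanism for it. The iteration only closes if the tail of $u$ at scales $r_k=\tau^k R$ is geometrically summable. In the paper this comes from Lemma~\ref{lem:u-tail}, whose proof transfers to $u$ the tail decay (Lemma~\ref{lem:tail-dec-v}) of the homogeneous \emph{mixed} solution $v$; that decay in turn is driven by the H\"older regularity of $v$ (Lemma~\ref{lem:mix-hol}), a consequence of $v$ solving the nonlocal equation. A local $p$-harmonic comparison map has no such property — outside the ball it equals $u$, and there is no equation linking its interior oscillation to the exterior tail. Finally, for $p\geq 2$ you invoke monotonicity to claim the telescoping is ``clean,'' but monotonicity alone yields $|\mu|^{1/(p-1)}$ (Wolff) behavior. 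Getting a \emph{linear} dependence on $|\mu|(B_r)/r^{n-1}$ requires the intrinsic linearization of \cite{AvKuMi}: the paper's two-scale alternatives (Section~\ref{sec:alternatives}), with the degenerate/non-degenerate split, the further measure/tail smallness dichotomy, the auxiliary map $v_*$ on $B_{Mr}$, and the pointwise bounds $|Dv_*|\approx\Lambda$, are all needed to establish Lemma~\ref{lem:A-comp}. This machinery is essential, not a technicality, and is entirely missing from your sketch.
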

 
 We remark that, due to the same reasons as in \cite[Remark~1.13]{ByunSong}, our results of Theorems~\ref{theo:intro} and \ref{theo:main} continue to hold when the nonlocal operator $\mathcal{L}(\cdot)$ given in \eqref{def:Lu} is replaced by
\[ \mathcal{L}_{\Phi}u(x) \coloneqq P.V.\int_{\rn}\Phi(u(x)-u(y))K(x,y)\dy, \]
where  $K(\cdot,\cdot)$ is not necessarily symmetric and $\Phi:\mathbb{R}\rightarrow\mathbb{R}$ is a continuous function satisfying
\[ \nu_\Phi |t|^{p} \le \Phi(t)t \le L_\Phi |t|^{p} \qquad \text{for all }\ t\in\mathbb{R}\qquad \text{with fixed constants}\ 0<\nu_{\Phi} \le L_{\Phi} <\infty\,. \] 

\subsection{Gradient regularity as corollary}\label{sec:coro}
A remarkable consequence of Theorems~\ref{theo:intro} and \ref{theo:main} is a reduction of the problem of getting sharp gradient estimates for~\eqref{eq:main} to the study of Riesz potentials, whose properties are well-understood. In fact, this way, we can infer several transfers of regularity from data to the gradient of solutions.  Before presenting the definition of Lorentz spaces,  we recall the definition of decreasing rearrangement of a function due to~\cite{BeSa88}. 
Let $f:\Omega\to\R$ be a measurable function; we assume that $f$ is extended to the whole $\rn$ by letting $f\equiv0$ on $\rn\setminus\Omega$. 
The decreasing rearrangement $f^* : [0, \infty) \to [0, \infty]$ is given by 
\begin{equation*}
f^\ast(t) \coloneqq \sup\Big \{ s \geq 0 \colon \big|\{x\in \R^n:|f(x)|>s\}\big| > t  \Big\}\,,
\end{equation*}
where we assume that $\sup\emptyset=0$. Equivalently it can also be defined as the function which is right-continuous, non-increasing, and equimeasurable with $f$, i.e., $|\{x : |f(x)| > t\}| = |\{x: f^{*}(x) > t\}|$ for all $t > 0$. 

Let $0 < \gamma,q \leq \infty$. The Lorentz space $L^{\gamma,q}(\Omega)$ is defined as the set of measurable functions $f : \Omega \to \R$ such that $\|f\|_{L^{\gamma,q}(\Omega)} < \infty$, where
\begin{equation*}
\|f\|_{{L^{\gamma,q}}(\Omega)} \coloneqq 
\left\| (\cdot)^{1/\gamma-1/q}f^{\ast}(\cdot) \right\|_{L^{q}(0,\infty)} \,.
\end{equation*}
For $q=\infty$ this space is usually called the Marcinkiewicz space. 
If we consider the maximal rearrangement defined by
\begin{equation*}
f^{\ast\ast}(t) \coloneqq \frac 1t \int_0^t f^\ast(s) \, \mathrm{d}s\,, \qquad t>0\,, 
\end{equation*}
then for any $\gamma \in (1,\infty]$ we have \begin{equation*}
\|f\|_{L^{\gamma,q}(\Omega)} \approx 
\left\| (\cdot)^{1/\gamma-1/q}f^{\ast\ast}(\cdot) \right\|_{L^{q}(0,\infty)} \,,
\end{equation*}
where $\approx$ means that the two quantities are comparable up to constants depending on the parameters of the space. Moreover, the quantity on the right-hand side makes $L^{\gamma,q}(\Omega)$ a rearrangement-invariant Banach space when $\gamma \in (1,\infty)$, $q \in [1,\infty]$ or $\gamma=q=\infty$. We also have strict inclusions
\[ L^{\gamma} = L^{\gamma,\gamma} \subsetneq L^{t,q} \subsetneq L^{t,t} \subsetneq L^{t,\gamma} \subsetneq L^{q,q} = L^{q} \quad \text{for any}\quad 0<q<t<\gamma<\infty\,. \] 

We also consider the space $L\log  L(\Omega)$, which is the subset of integrable functions $f:\Omega\to\R$ such that 
\[ \|f\|_{L \log  L(\Omega)} \coloneqq \inf\left\{\lambda>0: \int_\Omega \frac{|f|}{\lambda}\log\left(e +\frac{|f|}{\lambda} \right)\dx\leq 1\right\} <\infty\,. \]

The following Calder\'on--Zygmund-type results are direct consequences of Theorem~\ref{theo:intro}. As the transfer of regularity is the same as in the local $p$-Laplacian type problems, we refer to \cite[Section~9]{KuMiguide} and \cite[Section~10]{KuMi2018jems} for detailed discussions. In fact, for  $u$ being a SOLA  to~\eqref{eq:main} under assumptions \eqref{growth}--\eqref{s-p-range}, the following implications hold true:
\begin{enumerate}[{\it (i)}]
    \item  If $1 < \gamma <n$, $0 <q\leq\infty$, and $\mu \in L^{\gamma,q}_{\mathrm{loc}}(\Omega)$, then $|Du|^{p-1}\in L^{n\gamma/(n-\gamma),q}_{\mathrm{loc}}(\Omega)$;
   \item   If locally $\mu \in \mathcal{M}_{b}(\Omega)$, then $|Du|^{p-1}\in L^{n/(n-1),\infty}_{\mathrm{loc}}(\Omega)$;
   \item  If locally $\mu \in L\log L (\Omega)$, then $|Du|^{p-1}\in L_{\mathrm{loc}}^{n/(n-1)}(\Omega)$.
\end{enumerate}
Note that the above results were known in the local problems but are new for the mixed problem~\eqref{eq:main}.

Another notable new result, which is a consequence of Theorem~\ref{theo:main}, is a borderline gradient continuity criterion. We also refer to \cite{KuMiCV14} for a different proof. 

\begin{coro}\label{coro:cont}
Let $u$ be a SOLA to~\eqref{eq:main}  under assumptions \eqref{growth}--\eqref{s-p-range}. There exists $\sigma = \sigma(n,p) \in (0,1)$ such that the following holds: if $B \Subset\Omega$ is a ball with radius less than $\bar{R}$ given in Theorem~\ref{theo:main} and
\begin{equation*}
\lim_{\vr\to 0}\sup_{x\in B}\cI^{|\mu|}_{1}(x,\vr)=0\,,
\end{equation*}
then $Du$ is continuous in $B$. In particular, if $\mu \in L^{n,1}_{\loc}(\Omega)$, then $Du$ is continuous in $\Omega$. 
\end{coro}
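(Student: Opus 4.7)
The strategy is to show that the precise representative of $Du$---which by Theorem~\ref{theo:intro} is well-defined at every point of $B$ thanks to the uniform vanishing of $\cI^{|\mu|}_{1}(\cdot,\vr)$ on $B$---is in fact continuous on $B$. To this end I verify that for every $\varepsilon>0$ there exists $\rho_0>0$ such that $|Du(x_1)-Du(x_2)|<\varepsilon$ for all $x_1,x_2\in B$ with $|x_1-x_2|<\rho_0$. Given such $x_1,x_2$, I fix a small radius $R\le\bar R$ with $|x_1-x_2|\ll R$ and split
\[
|Du(x_1)-Du(x_2)|\le\sum_{i=1,2}|Du(x_i)-(Du)_{B_R(x_i)}|+|(Du)_{B_R(x_1)}-(Du)_{B_R(x_2)}|,
\]
bounding the last term by the mean oscillation of $Du$ on an enlarged ball containing both $B_R(x_i)$. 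In the degenerate case $p\ge2$ I work at the level of $A(Du)$ and use the monotonicity of $A$ to transfer continuity back to $Du$.

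The two outer summands I estimate by Theorem~\ref{theo:main} applied at $x_i$ at scale $R$, and then verify that each term on the right-hand side of \eqref{theo:main-est1} or \eqref{theo:main-est2} is uniformly small for $x\in B$. The Riesz potential is small by hypothesis. The term carrying $R^\kappa$ multiplies $\mean{B_R(x)}|A(Du)|\dx$ (or $\mean{B_R(x)}|Du|\dx$ if $p<2$), which is uniformly bounded for $R\le\operatorname{dist}(B,\partial\Omega)/2$ by local $L^{q_0}$-integrability of $Du$ granted by the SOLA framework, so the $R^\kappa$ factor kills the term. The tail prefaced by $R^\sigma$ is handled by splitting the integration domain into $\{R<|y-x|\le1\}$ and $\{|y-x|>1\}$: the far part is uniformly bounded by the standard SOLA tail norm, while the near part is uniformly bounded for $x\in B$ via local $L^{p-1}$ bounds on $u$, so again $R^\sigma$ suffices.

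The remaining and most delicate contribution is the mean oscillation $\mean{B_R(x)}|A(Du)-(A(Du))_{B_R(x)}|\dx$ (and its analogue for $Du$), which must tend to $0$ as $R\to0$ \emph{uniformly} in $x\in B$. This forces me to open up the proof of Theorem~\ref{theo:main} rather than use it as a black box: the essential intermediate ingredient is an excess decay inequality
\[
\mean{B_{\tau R}(x)}|A(Du)-(A(Du))_{B_{\tau R}(x)}|\dx\le c\tau^{\alpha}\mean{B_R(x)}|A(Du)-(A(Du))_{B_R(x)}|\dx+c\,\Psi(x,R)
\]
valid for some $\tau,\alpha\in(0,1)$, with $\Psi(x,R)$ built from the Riesz potential at scale $R$ together with the polynomial prefactors and tail terms already controlled above. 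Iterating over dyadic scales $\tau^k R$ and summing the geometric series, combined with the uniform smallness of $\Psi(x,R)$ for small $R$, yields uniform vanishing of the mean oscillation.

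Combining the three ingredients gives uniform continuity of $Du$ on $B$. For the final assertion, the standard Lorentz-space bound $\cI^{|\mu|}_{1}(x,\vr)\le c(n)\|\mu\|_{L^{n,1}(B_\vr(x))}$ forces $\sup_{x\in B}\cI^{|\mu|}_{1}(x,\vr)\to0$ as $\vr\to0$ whenever $\mu\in L^{n,1}_{\loc}(\Omega)$ and $B\Subset\Omega$, so the first part applies and $Du$ is continuous throughout $\Omega$. The main obstacle is the iteration argument of the third paragraph: one must extract a genuine excess decay from the internal mechanics of the proof of Theorem~\ref{theo:main} and check that the dependence on the center is uniform across $B$.
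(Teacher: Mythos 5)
Your overall strategy mirrors the paper's: apply the pointwise bounds of Theorem~\ref{theo:main} to nearby points at a common scale, then show every term on the right-hand side of \eqref{theo:main-est1} or \eqref{theo:main-est2} vanishes as the scale shrinks. For the mean oscillation and the $L^{n,1}$ corollary you are correct, and you rightly identify the need to re-enter the proof of Theorem~\ref{theo:main} rather than use it as a black box. The paper's proof is set up to show plain continuity (oscillation around an arbitrary fixed $x_0$ tends to zero), whereas you aim for uniform continuity; that difference is harmless, though unnecessary.

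There is, however, a genuine gap in your second paragraph, in the treatment of the tail term $R^\sigma\int_{\rn\setminus B_{R}(x)}\frac{|u(y)-(u)_{B_{R}(x)}|^{p-1}}{|y-x|^{n+sp}}\dy$. You claim the near part $\{R<|y-x|\le 1\}$ is uniformly bounded by local $L^{p-1}$ bounds on $u$, so that the prefactor $R^\sigma$ makes the whole expression vanish. This is false: already for locally Lipschitz $u$ the near part scales like $\int_R^1 \rho^{(1-s)p-2}\,{\rm d}\rho\sim R^{(1-s)p-1}$, which blows up as $R\to0$ whenever $(1-s)p<1$; a crude $L^{p-1}$ bound gives the even worse rate $R^{-n-sp}$. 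Hence one cannot conclude anything from boundedness, and the claim ``so again $R^\sigma$ suffices'' does not hold. To close the gap one must (i) choose $\sigma=\sigma(s,p)$ close enough to~$1$ so that $\sigma+(1-s)p-1>0$ (this is precisely the paper's hypothesis \eqref{sigma.add}), and (ii) use the \emph{refined} decay of the weighted tail extracted from inside the proof of Theorem~\ref{theo:main}, namely the summability bound \eqref{Du-pot-ind2} (resp.\ \eqref{ADu-pot-ind2} when $p\ge2$), which forces $\varrho^{\sigma/(p-1)}\bigl[\varrho^{-p'}\tail(u-(u)_{B_\varrho};\varrho)\bigr]\to0$. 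In other words, the tail term needs the same iteration/VMO machinery you correctly invoke for the mean oscillation in your third paragraph; a boundedness argument is not enough.
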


\subsection{Strategy and novelty} 
As in the case of local equations, our basic strategy for gradient potential estimates is to examine the decay property of certain excess functionals of the gradient. However, the presence of the nonlocal term in our problem \eqref{eq:main} gives rise to several difficulties in dealing with gradient estimates.
Indeed, even for homogeneous mixed problems, while the gradient H\"older regularity is proved in \cite{DFM-mixed, GarLin23}, an explicit form of excess decay estimate for the gradient is not known as far as we know. 
Therefore, our first aim is to obtain such an estimate for the homogeneous mixed equation
\[ -\mathrm{div}\,A(Dv) + \mathcal{L}v  = 0\,. \]
To this aim, we start by establishing a comparison estimate (see Lemma~\ref{mix.loc.comp})
with a relevant comparison problem involving the homogeneous local equation
\[ -\mathrm{div}\,A(Dw) = 0 \,. \] 
The basic idea is the same as in \cite[Lemma~4.2]{DFM-mixed}. However, in our measure data problems, we have to make use of tails ($L^{p-1}$-nonlocal averages) instead of snails ($L^{p}$-nonlocal averages) employed in \cite{DFM-mixed}. Note that the use of snails is restricted to the energy range only. 
We then combine such a comparison estimate between $Dv$ and $Dw$ with the known excess decay estimate for $Dw$ given in Lemma~\ref{localeq.ed}. 
Indeed, the main difficulty here is that in Lemma~\ref{lem:ex1}, the tail does not display the same rate of decay property as that of the gradient. To overcome this difficulty, we further restrict the radii of balls and then show a uniform density estimate for the gradient and tail terms over a sequence of concentric balls via an induction argument. Consequently, we are able to infer quantified H\"older estimates for $Dv$ (Theorem~\ref{theo:homo-mixed-hol}), which is also a new result of its own interest, of the following form 
\begin{align*}
\lefteqn{ \mean{B_{\rho}} |Dv - (Dv)_{B_{\rho}}|\dx } \\
&\leq c \left( \frac{\rho}{r} \right)^{\beta_{v}} \Bigg[ \mean{B_{r}} |Dv - (Dv)_{B_{r}}| \dx + r^{\bar a_1 - \ve_{1}} \left( \mean{B_{r}}|Dv|^{q_0} \dx \right)^{1/q_0}  + r^{\bar a_2 - \ve_{1}}\left[\frac{1}{r^{p'}}\tail(v-(v)_{B_{r}};r)\right] \Bigg]\,,
\end{align*} 
where $\beta_v\in(0,1)$ is a universal constant, $\bar a_1,\bar a_2>0$ are given in \eqref{bar a def}, and $\tail(\cdot)$ is defined in \eqref{Tail}. We note that in view of the scaling property of the problem (see also \cite[Theorem~1.4]{ByunSong}), this procedure yields a sub-optimal decay of tails in the sense that we cannot allow $\varepsilon_{1} = 0$ in Theorem~\ref{theo:homo-mixed-hol}, thereby prohibiting us from taking $\sigma=1$ in Theorems~\ref{theo:intro} and \ref{theo:main}. Nevertheless, we believe that the estimates we provide are the best ones that can be obtained by the applied method. 
In fact, Theorem~\ref{theo:homo-mixed-hol} brings a quantitative version of \cite[Theorem~5]{DFM-mixed} in the case $\gamma=p$ and $f\equiv 0$. On the other hand, the study of \cite{DFM-mixed} was carried out for a more general class of operators, as mentioned above.  

We next explain the process of transferring such decay property of $Dv$ to $Du$ via additional comparison estimates. 
Some basic estimates that we owe to \cite{ByunSong} are recalled in Lemma~\ref{lem:mix-comp2}. 
In this stage, as typical in the study of measure data problems involving $p$-Laplacian type operators (see e.g. \cite{AvKuMi, ByYoun, DuMiJFA2010, DuMiAJM2011, KuMiARMA2013, KuMiJEP2016}), the strategy splits into the superquadratic and subquadratic cases, since the problem exhibits different monotonicity properties according to the range of $p$. 

\subsubsection*{The superquadratic case} 
In this case, the main point is that we have to consider the degeneracy of the problem in order to obtain estimates via Riesz potentials, bypassing Wolff potentials. 
For this, we intend to establish all the comparison and excess estimates in terms of $A(Du)$ instead of $Du$, relying on an intrinsic linearization technique which was introduced in~\cite{AvKuMi} and applied to gradient potential estimates in \cite{BSY}. 
Consequently, the main result is concerned with an excess decay estimate for $A(Du)$. Such an estimate is obtained by first proving an excess decay estimate for $A(Dv)$ (contained in Proposition~\ref{prop:homo-mixed-hol2}) and then transferring such an estimate to $A(Du)$ by means of enhanced comparison estimates between $A(Du)$ and $A(Dv)$ (contained in Lemma~\ref{lem:A-comp}). 
To achieve the comparison estimate involving the $A(\cdot)$-map, we perform a two-scale alternative scheme involving the excess and mean value of $A(Du)$, that is
\[E(A(Du);B_{Mr}) \coloneqq \mean{B_{Mr}} |A(Du) - (A(Du))_{B_{Mr}}| \dx\qquad \text{and}\qquad |(A(Du))_{B_{r/M}}|\,, \]
with $M$ being a free parameter, which will be eventually fixed as a universal constant. 
We distinguish between the situation when $|(A(Du))_{B_{r/M}}|$ is small (resp. large) comparing to $E(A(Du);B_{Mr})$. 
Precisely, the alternative scheme splits the analysis into the following cases: 
\begin{enumerate}[{\it i)}]
    \item  {\it degenerate} -- $|(A(Du))_{B_{r/M}}|$ can be controlled from above by $E(A(Du);B_{Mr})$;
    \item {\it non-degenerate} -- $|(A(Du))_{B_{r/M}}|$ is much larger than $E(A(Du);B_{Mr})$. 
\end{enumerate} 
The non-degenerate case requires more careful treatment and is further divided into two subcases. If the mean value of $|Du|^{p-1}$ is larger than the measure term and the tail term, then we can further strengthen the comparison in an intrinsic form. This results from the fact that 
the oscillation of $Du$ is small in the relevant sense, which implies that the leading local part of the operator is non-degenerate. 
In this case, we consider an additional comparison map $v_{*}$ (defined on $B_{Mr}$ instead of $B_r$) and make use of another comparison estimate given in Lemma~\ref{lem:mix-comp3}. 
Finally, we can conclude with estimates for the excess of $A(Du)$ and related tail terms in 
Lemmas~\ref{lem:ADuexcess-pge2} and \ref{lem:u-tail}, respectively. Summing up and iterating the resulting estimates, we conclude with \eqref{theo:main-est1}.

\subsubsection*{The subquadratic case} This situation cannot be handled by the same reasoning as in the abovementioned case since it differs in the monotonicity properties of the vector field $A(\cdot)$. 
In the local case, the techniques from \cite{AvKuMi, BSY} can be applied in the subquadratic case (see also \cite{BSY2}), but this requires several reverse H\"older type estimates for the gradient.  In the mixed case, the proof of such estimates is not clear due to the presence of nonlocal tails. Therefore we use directly $Du$ in this case. Accordingly, the proof in the subquadratic case requires the control on the excess of $Du$, which is obtained as a result of basic comparison between $Du$ and $Dv$ given in Section~\ref{sec:comp-homo-mixed-meas-mixed}.  This requires the control of an additional term appearing in Lemma~\ref{lem:mix-comp2}, which is responsible for the second term on the right-hand side of \eqref{theo:main-est2}. Moreover, again, due to the monotonicity of the problem, in Theorem~\ref{theo:homo-mixed-hol}, the exponents related to the decay rate of the gradient and tail are different compared to the superquadratic case. Here, an additional free parameter $m \in (0,1-s)$ appears. It is eventually fixed as a universal constant in the later proof. 
Taking into account all these features, we conclude with excess decay estimates for $Du$ and tail estimates whose final forms are analogous to the ones obtained in the superquadratic case. Then, a similar iteration procedure as described above leads to \eqref{theo:main-est2}.

\subsection{Organization} 
Section~\ref{sec:prelim} introduces basic notation and known regularity results for reference problems. In Section~\ref{sec:quant-hold} we provide quantitative gradient H\"older regularity for homogeneous mixed equations via the comparison with a homogeneous local problem. Excess decay estimates for measure data mixed problems are presented in Section~\ref{sec:ex-dec-measure-data}. Section~\ref{sec.pf} is devoted to the proofs of the main result (Theorem~\ref{theo:main}) and its consequences.

\section{Preliminaries}\label{sec:prelim}
\subsection{Notation}

We shall adopt the customary convention of denoting by $c$ a constant greater than or equal to 1,  whose value may vary from line to line. To skip rewriting a constant, we use $\lesssim$. By $a\approx b$, we mean $a\lesssim b$ and $b\lesssim a$. 
By $B_R$ we denote a ball skipping prescribing its center, when {it} is not important. By $cB_R=B_{cR}$ we mean a ball with the same center as $B_R$, but with rescaled radius $cR$. 
In the sequel, unless otherwise mentioned, we always assume that every ball has a radius of less than one. For any $p>1$ we set $p'=p/(p-1)$.

Let $\mathcal{M}_b(U)$ for an open set $U\subset\rn$ denote the set of bounded signed Borel measures on $U$. For a~measurable set $U\subset \rn$  with finite and positive $n$-dimensional Lebesgue measure $|U|>0$ and $f\colon U\to \R^{k}$, $k\ge 1$ being a measurable map, we denote
\[ (f)_U \coloneqq \mean{U}f(x) \dx =\frac{1}{|U|}\int_{U}f(x) \dx\,. \]
We also denote 
\[ \osc_{U} f \coloneqq \sup_{x,y\in U}|f(x)-f(y)|\,. \]

Let us consider the tail space defined by
\begin{equation*}
L^{p-1}_{sp}(\mathbb{R}^{n}) \coloneqq \left\{f:\mathbb{R}^{n}\rightarrow\mathbb{R}  : \ \int_{\mathbb{R}^{n}}\frac{|f(x)|^{p-1}}{(1+|x|)^{n+sp}}\dx < \infty \right\}\,.
\end{equation*}
It is not difficult to see that, whenever $f \in L^{p-1}_{sp}(\rn)$, the nonlocal tail of $f$
\begin{equation}\label{Tail}
    {\rm Tail}(f;x_0,r) \coloneqq \left(r^p\int_{\rn\setminus B_r(x_0)}\frac{|f(x)|^{p-1}}{|x-x_0|^{n+sp}}\dx\right)^{1/(p-1)}
\end{equation}
is finite for any $x_0$ and $r>0$. Note that $L^{p}(\rn)\subset L^{p-1}_{sp}(\rn)$.

We consider the vector field $V:\rn\to\rn$ defined by
\begin{equation}\label{V}
V(z) \coloneqq |z|^\frac{p-2}{2}z \quad \text{for }\ z\in \rn\,.
\end{equation}

\begin{lem}\label{lem:mono}
Let $V$ be given by~\eqref{V} and $A$ satisfy \eqref{growth}. Then both $V$ and $A$ are locally bi-Lipschitz bijections of $\rn$. Moreover, the following inequalities hold for any $z_1,z_2 \in \rn$:
\begin{equation*}
\begin{aligned}
|A(z_{1})-A(z_{2})| & \approx (|z_{1}|+|z_{2}|)^{p-2}|z_{1}-z_{2}|\,, \\
(A(z_{1})-A(z_{2})) \cdot (z_{1}-z_{2}) & \approx (|z_{1}|+|z_{2}|)^{p-2}|z_{1}-z_{2}|^{2} \approx |V(z_{1})-V(z_{2})|^{2}\,.
\end{aligned}
\end{equation*}
\end{lem}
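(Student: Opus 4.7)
The plan is to reduce every claim to the fundamental identity
\[
A(z_1)-A(z_2)=\int_0^1 \partial A\bigl(z_2+t(z_1-z_2)\bigr)(z_1-z_2)\dt,
\]
combined with the pointwise bounds \eqref{growth}. Under \eqref{growth}, $A\in C^1(\rn)$ when $p\ge 2$ and $A\in C^1(\rn\setminus\{0\})$ when $p<2$; in the latter case the identity is legitimate whenever the segment $t\mapsto z_2+t(z_1-z_2)$ avoids the origin, and the resulting inequalities extend to the full range of $(z_1,z_2)$ by continuity.

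Applying \eqref{growth} to the integrand gives directly
\[
|A(z_1)-A(z_2)|\le L_A|z_1-z_2|\int_0^1|z_2+t(z_1-z_2)|^{p-2}\dt,
\]
while pairing the identity with $z_1-z_2$ and using ellipticity yields
\[
(A(z_1)-A(z_2))\cdot(z_1-z_2)\ge \nu_A|z_1-z_2|^2\int_0^1|z_2+t(z_1-z_2)|^{p-2}\dt.
\]
The remaining ingredient is the elementary equivalence
\[
\int_0^1|z_2+t(z_1-z_2)|^{p-2}\dt\approx (|z_1|+|z_2|)^{p-2},
\]
with constants depending only on $p$. For $p\ge 2$ this is immediate by convexity and the pointwise bound $|z_2+t(z_1-z_2)|\le |z_1|+|z_2|$; for $1<p<2$ one splits the interval of integration according to whether $t$ is close to the possible zero of $|z_2+t(z_1-z_2)|$ on the segment and uses $p-1>0$ to keep the integral of the correct order. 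The matching lower bound on $|A(z_1)-A(z_2)|$ then follows by combining the monotonicity estimate just obtained with Cauchy--Schwarz, $(A(z_1)-A(z_2))\cdot(z_1-z_2)\le |A(z_1)-A(z_2)||z_1-z_2|$; this settles both equivalences involving $A$.

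The identity $(|z_1|+|z_2|)^{p-2}|z_1-z_2|^2\approx |V(z_1)-V(z_2)|^2$ is proved by the same scheme applied to $V(z)=|z|^{(p-2)/2}z$: a direct computation shows that $V\in C^1(\rn\setminus\{0\})$ with $|\partial V(z)|\lesssim |z|^{(p-2)/2}$ and $\partial V(z)\xi\cdot\xi\gtrsim |z|^{p-2}|\xi|^2$, so the fundamental-theorem representation of $V(z_1)-V(z_2)$ reduces the claim to the integral equivalence above. Finally, the bi-Lipschitz bijection property of $A$ (respectively $V$) follows from the two-sided bounds just proved: on any compact set in $\rn\setminus\{0\}$ one has genuine bi-Lipschitz control, global injectivity is a consequence of strict monotonicity, and surjectivity onto $\rn$ is obtained from coercivity $|A(z)|\approx|z|^{p-1}\to\infty$ via the Hadamard--Minty theorem for maximal monotone operators. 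The only real subtlety is the subquadratic case $p<2$, where $\partial A$ and $\partial V$ blow up at the origin, so the integral equivalence above is the sole point where the argument is not a direct substitution from \eqref{growth}; everything else is a mechanical application of growth and ellipticity.
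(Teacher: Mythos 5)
The paper states Lemma~\ref{lem:mono} without proof, treating it as a standard fact from the literature on $p$-Laplacian type vector fields (see, e.g., the references cited in \cite{KuMiguide}), so there is no ``paper's own proof'' to compare against. Your argument is the classical one: represent $A(z_1)-A(z_2)$ by integrating $\partial A$ along the segment, bound the integrand with \eqref{growth}, reduce everything to the integral equivalence $\int_0^1 |z_2+t(z_1-z_2)|^{p-2}\,\mathrm{d}t \approx (|z_1|+|z_2|)^{p-2}$, and recover the lower bound on $|A(z_1)-A(z_2)|$ from the monotonicity estimate plus Cauchy--Schwarz. The bi-Lipschitz bijection claim is correctly reduced to the two-sided pointwise bounds plus strict monotonicity and coercivity. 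Up to the remark below, this is a complete and correct proof of the lemma.

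One small slip: for $V(z)=|z|^{(p-2)/2}z$ a direct computation gives
\[
\partial V(z) = |z|^{(p-2)/2}\left(\id + \tfrac{p-2}{2}\tfrac{z\otimes z}{|z|^2}\right),
\]
whose eigenvalues are $|z|^{(p-2)/2}$ and $\tfrac{p}{2}|z|^{(p-2)/2}$. Hence $\partial V(z)\xi\cdot\xi \gtrsim |z|^{(p-2)/2}|\xi|^2$, not $|z|^{p-2}|\xi|^2$ as you wrote. This is the exponent one actually wants: the FTC representation of $V(z_1)-V(z_2)$, combined with the integral equivalence for the exponent $(p-2)/2$ (valid since $(p-2)/2>-1$ for $p>0$), then yields $|V(z_1)-V(z_2)|\approx(|z_1|+|z_2|)^{(p-2)/2}|z_1-z_2|$, whose square is the asserted estimate. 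With that correction the argument for $V$ goes through exactly as for $A$.

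A further point worth making explicit for the subquadratic case $1<p<2$: the upper bound in the integral equivalence needs a one-line justification that the integral does not blow up when the segment passes near the origin. Writing $|\gamma(t)|^2=d^2+(t-t_*)^2|z_1-z_2|^2$ with $d$ the distance from the line to the origin, one gets $|\gamma(t)|\geq|t-t_*|\,|z_1-z_2|$, so $\int_0^1|\gamma(t)|^{p-2}\,\mathrm{d}t\leq \tfrac{2}{p-1}|z_1-z_2|^{p-2}$; and when $d<\tfrac14\max\{|z_1|,|z_2|\}$ one checks that $|z_1-z_2|\gtrsim\max\{|z_1|,|z_2|\}$, which (since $p-2<0$) gives the required bound. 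Your ``split the interval according to the zero'' sketch is exactly this, but this is the only nonmechanical step and deserves the estimate spelled out.
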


We next recall the definition and basic properties of fractional Sobolev spaces; a general reference is \cite{DPV}.
Let $p \ge 1$ and $s \in (0,1)$. For any open set $\mathcal{O}\subseteq \mathbb{R}^{n}$, the fractional Sobolev space $W^{s,p}(\mathcal{O})$ is the set of all functions $f \in L^{p}(\mathcal{O})$ satisfying
\begin{align*}
\lVert f \rVert_{W^{s,p}(\mathcal{O})} \coloneqq \lVert f \rVert_{L^{p}(\mathcal{O})} + [f]_{s,p;\mathcal{O}} \coloneqq \left(\int_{\mathcal{O}}|f|^{p}\dx\right)^{1/p} + \left(\int_{\mathcal{O}}\int_{\mathcal{O}}\frac{|f(x)-f(y)|^{p}}{|x-y|^{n+sp}}\dx\dy\right)^{1/p} < \infty\,.
\end{align*}
We define Sobolev space of functions with zero trace as follows
\[ \mathcal{X}^{1,p}_{0}(\Omega) \coloneqq \left\{ f \in W^{1,p}(\mathbb{R}^{n}) : \ \ f|_{\Omega} \in W^{1,p}_{0}(\Omega),\ f = 0\ \text{ a.e. in } \mathbb{R}^{n}\setminus\Omega \right\}\,. \]

Due to \cite[Lemma~2.2]{DFM-mixed} (see also \cite[Proposition~2.2]{DPV}), there holds the following fact.
\begin{lem}
    Let $1 \leq  p < \infty$, $s \in (0,1)$ and $B_{r} \subset \rn$ be a ball. If $h \in W^{1,p}_{0}(B_{r})$, then $h \in W^{s,p}(B_{r})$ and for a constant $c = c(n,p,s)>0$ it holds that
    \begin{equation}\label{Sob.ineq}
        \left( \int_{B_{r}}\mean{B_{r}} \frac{|h(x) - h(y)|^{p}}{|x-y|^{n+sp}} \dx \dy \right)^{1/p} \leq c r^{1-s} \left( \mean{B_{r}} |Dh|^{p} \dx \right)^{1/p}\,.
    \end{equation}
\end{lem}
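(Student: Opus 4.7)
The plan is to reduce the inequality to unit scale by a dilation argument, and then to establish the unit-scale version by combining a standard Gagliardo-type embedding of $W^{1,p}$ into $W^{s,p}$ with the Poincar\'e inequality afforded by the zero boundary condition.

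\textbf{Scaling.} Given $h \in W^{1,p}_0(B_r)$, setting $\tilde h(x) := h(rx)$ produces $\tilde h \in W^{1,p}_0(B_1)$. The substitutions $x \mapsto rx$, $y \mapsto ry$ in both sides of the claimed inequality show that the left-hand side carries an $r^{s}$ scaling and the right-hand side an $r$ scaling, so the asserted factor $r^{1-s}$ reduces the problem to the case $r = 1$.

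\textbf{The core estimate on $B_1$.} Extend $h$ by zero to $\rn$, giving $h \in W^{1,p}(\rn)$ with $\|Dh\|_{L^p(\rn)} = \|Dh\|_{L^p(B_1)}$. The core claim is that
\[
[h]_{s,p;B_1}^{p} \,\leq\, \int_{\rn}\int_{\rn}\frac{|h(x)-h(y)|^{p}}{|x-y|^{n+sp}}\dx\dy \,\leq\, c\bigl(\|Dh\|_{L^{p}(B_{1})}^{p} + \|h\|_{L^{p}(B_{1})}^{p}\bigr)
\]
with $c = c(n,s,p)$; combined with the Poincar\'e inequality $\|h\|_{L^{p}(B_{1})} \le c(n,p)\|Dh\|_{L^{p}(B_{1})}$, available because $h$ has zero trace on $\partial B_{1}$, this absorbs the second term. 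Dividing by $|B_{1}|$ and taking $p$-th roots then yields the claim. To prove the displayed estimate, split the double integral at $|x-y| = 1$. On the set $\{|x-y| \geq 1\}$, apply the elementary bound $|h(x)-h(y)|^{p} \leq 2^{p-1}(|h(x)|^{p} + |h(y)|^{p})$; the kernel is integrable at infinity thanks to $sp > 0$, so the contribution is bounded by a constant times $\|h\|_{L^{p}(\rn)}^{p}$. On $\{|x-y| < 1\}$, pass to polar coordinates $x = y + \rho\omega$ centered at $y$, use $|h(x)-h(y)| \leq \int_{0}^{\rho}|Dh(y+t\omega)|\dt$, raise to the $p$-th power via H\"older in $t$, and swap the $\rho$- and $t$-integrations by Fubini to reduce matters to a radial computation of $\int_{t}^{1} \rho^{p(1-s)-2}\,\mathrm{d}\rho$.

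\textbf{Main obstacle.} The one technical point is precisely that last radial integral: the sign of $p(1-s)-1$ depends on the parameters, so the three cases $p(1-s) \gtrless 1$ must be treated separately, with the resulting $t$-powers reabsorbed into the outer gradient integral against $|Dh(y+t\omega)|^{p}$. This is a bookkeeping issue rather than a conceptual one, and it is where the assumption $s \in (0,1)$ (which secures $p(1-s) > 0$) is used and where the final constant picks up its joint dependence on $s$ and $p$.
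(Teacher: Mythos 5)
Your argument is correct, and it is the standard route to this estimate: the paper does not give a proof (it simply cites Lemma~2.2 of De Filippis--Mingione and Proposition~2.2 of Di~Nezza--Palatucci--Valdinoci), but the argument in those references is precisely the combination you use --- zero extension of $h\in W^{1,p}_0$, a near/far split of the Gagliardo double integral, the FTC/polar-coordinates/Fubini computation on the near part (with the three cases $p(1-s)\gtrless 1$ handled as you indicate, the logarithmic case being harmless because $\int_0^1\ln(1/t)\,\mathrm{d}t<\infty$), and Poincar\'e to absorb $\|h\|_{L^p}$. The dilation $\tilde h(x)=h(x_0+rx)$ correctly produces the $r^{1-s}$ factor once the two sides are rewritten in non-averaged form, since both sides carry the same $|B_r|^{1/p}$ after multiplying through.
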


\subsection{Weak solutions and SOLA}

When the datum is regular, we define weak solutions to~\eqref{eq:main} as follows.

\begin{defi}[Weak solution]\label{weak.sol}
Let $\mu \in W^{-1,p'}(\Omega)$ and $1<p<\infty$. We say that a function $u \in
W^{1,p}_{\loc}(\Omega) \cap W^{s,p}(\mathbb{R}^{n})$ is a~weak solution to the equation
\begin{equation}
    \label{eq-def-weak}
-\mathrm{div}\,A(Du) + \mathcal{L}u = \mu \quad \text{in } \Omega\,, \end{equation} 
under assumptions
\eqref{growth}--\eqref{kernel.growth} with $p > 1$ and $s\in(0,1)$,
if for any $\varphi \in C^{\infty}_{0}(\Omega)$ it holds
\begin{equation*}
\int_{\Omega}A(Du)\cdot D\varphi\dx +
\int_{\mathbb{R}^{n}}\int_{\mathbb{R}^{n}}|u(x)-u(y)|^{p-2}(u(x)-u(y))(\varphi(x)-\varphi(y))K(x,y)\dx\dy = \langle\mu,\varphi\rangle\,.
\end{equation*}
Moreover, given any boundary data $g \in W^{1,p}(\mathbb{R}^{n})$, we say that $u$ is a  weak solution to the problem~\eqref{eq:main}, 
if $u$ is a weak solution to~\eqref{eq-def-weak} and, in addition, $u-g \in \mathcal{X}^{1,p}_{0}(\Omega)$.
\end{defi}
The existence and uniqueness of weak solutions to \eqref{eq:main} can be proved by standard monotonicity methods, see \cite[Appendix]{ByunSong}.

Since weak solutions do not need to exist in the case of an arbitrary measure datum, we shall consider solutions obtained as a limit of approximation (SOLA for short). They were introduced in~\cite{bgSOLA} for local $p$-Laplace-type problems. In the case of mixed local-nonlocal $p$-Laplace-type problems, the existence of SOLA is provided in \cite{ByunSong} upon the following definition.

\begin{defi}[SOLA]\label{def.sola}
Let $\mu \in \mathcal{M}_{b}(\mathbb{R}^{n})$, $g \in W^{1,p}_{\mathrm{loc}}(\mathbb{R}^{n}) \cap L^{p-1}_{sp}(\mathbb{R}^{n})$, $2-1/n<p<\infty$.
We say that a~function $u:\mathbb{R}^{n}\rightarrow\mathbb{R}$ satisfying $u \in W^{1,q}(\Omega)$ for any $\max\{p-1,1\} \le q < \min\left\{\frac{n(p-1)}{n-1},p\right\}$ 
is a SOLA to \eqref{eq:main}, under assumptions
\eqref{growth}--\eqref{s-p-range}, if it is a distributional solution,
i.e.,
\begin{equation*}
    \int_{\Omega}A(Du)\cdot D\varphi\dx + \int_{\mathbb{R}^{n}}\int_{\mathbb{R}^{n}}|u(x)-u(y)|^{p-2}(u(x)-u(y))(\varphi(x)-\varphi(y))K(x,y)\dx\dy = \int_{\mathbb{R}^{n}}\varphi\,\mathrm{d}\mu
\end{equation*}
holds for any $\varphi \in C^{\infty}_{0}(\Omega)$, and $u=g$ a.e. in $\mathbb{R}^{n}\setminus\Omega$. Moreover, there exists a sequence of weak solutions $ \{u_{j}\} \subset W^{1,p}(\mathbb{R}^{n})$ to the Dirichlet problems
\begin{equation*}
\left\{
\begin{aligned}
-\mathrm{div}\,A(Du_{j}) + \mathcal{L}u_{j} & = \mu_{j} & \text{ in } & \Omega\,, \\
u_{j} & = g_{j} & \text{ in } & \mathbb{R}^{n}\setminus \Omega\,,
\end{aligned}
\right.
\end{equation*}
in the sense of Definition~\ref{weak.sol}, such that $u_{j}$ converges to $u$ a.e. in $\mathbb{R}^{n}$ and in $W^{1,q}(\Omega)$.  
Here the sequence $\{\mu_{j}\}\subset C^{\infty}_{0}(\mathbb{R}^{n})$ satisfies $ \mu_{j} \xrightharpoonup{*} \mu $ in the {sense} of measures and \[\limsup_{j\rightarrow\infty} |\mu_{j}|(B) \le |\mu|(\overline{B})\quad \text{
for every ball }B \subset \mathbb{R}^{n}\,.\] 
The sequence $\{g_{j}\} \subset C^{\infty}_{0}(\mathbb{R}^{n})$ converges to $g$ in the following sense: for any ball $B_{r} \equiv B_{r}(x_0)$, it holds that
\begin{equation*}
    \lim_{j\rightarrow\infty}\lVert g_{j} - g \rVert_{W^{1,p}(B_{r})} = 0 \qquad \textrm{and} \qquad \lim_{j\rightarrow\infty}\tail(g_{j}-g;x_0,r) = 0\,.
\end{equation*}
\end{defi}
The lower bound for $p$ in the definition of SOLA is needed to ensure that $u$ possesses a distributional gradient that is locally integrable.
The existence of $\{ \mu _{j} \} \subset C_{0}^{\infty}(\mathbb{R}^{n})$ in Definition~\ref{def.sola} is guaranteed by the standard mollification argument.

\begin{rem}
\rm In Definition~\ref{weak.sol}, we may use the function space
\[ \left\{ f \in W^{1,p}(\Omega_{0}) \cap L^{p-1}_{sp}(\mathbb{R}^{n}) : f|_{\Omega} \in W^{1,p}_{0}(\Omega), \ f = 0 \text{ a.e. in }\mathbb{R}^{n}\setminus \Omega \right\} \]
for a fixed bounded open set $\Omega_{0} \Supset \Omega$, instead of $\mathcal{X}^{1,p}_{0}(\Omega)$, thereby allowing boundary data in $W^{1,p}(\Omega_{0}) \cap L^{p-1}_{sp}(\mathbb{R}^{n})$, see for instance \cite{BKL,GarLin23}. Accordingly, in Definition~\ref{def.sola} we can consider boundary data in $W^{1,p}_{\loc}(\Omega_{0}) \cap L^{p-1}_{sp}(\mathbb{R}^{n})$. However, in this paper we will use $\mathcal{X}^{1,p}_{0}(\Omega)$ for simplicity.
\end{rem}

\subsection{Regularity for auxiliary homogeneous equations}
Here we recall several regularity estimates for homogeneous mixed equations and homogeneous local equations.

We first consider $v \in W^{1,p}_{\loc}(\Omega) \cap W^{s,p}(\rn)$ being a weak solution to the homogeneous mixed equation
\begin{equation}
    \label{eq:homo-mixed}
    -\dv\,A(Dv)+\cL v=0\qquad\text{in }\ \Omega\,.
\end{equation}
The combination of \cite[Lemma 4.2]{GarKin22} 
and \cite[Remark 6.12]{G03} yields the following local sup-estimate for $v$. 
\begin{lem}\label{lem:bdd}
Let $v$ be a weak solution to \eqref{eq:homo-mixed} under assumptions \eqref{growth}--\eqref{kernel.growth} with $p > 1$ and $s \in (0,1)$.
Then there exists $c = c(\data)$ such that for any $B_{r} \Subset \Omega$ and $k \in \R$, it holds
\[ \sup_{B_{r/2}} |v-k| \leq c \mean{B_{r}} |v-k| \dx + \tail(v-k; r/2)\,. \]
\end{lem}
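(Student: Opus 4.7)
Since the local and nonlocal parts of the operator both annihilate additive constants (the $p$-Laplacian operator has divergence form with an $A$ depending only on $Du$, and the kernel of $\mathcal{L}$ depends on differences $u(x)-u(y)$), the shifted function $\bv \coloneqq v-k$ is itself a weak solution of the same homogeneous mixed equation. So it suffices to prove the estimate for $k=0$, rewritten in terms of $\bv$, namely
\[
\sup_{B_{r/2}} |\bv| \;\le\; c \mean{B_{r}} |\bv| \dx + \tail(\bv; r/2)\,.
\]

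The first ingredient is \cite[Lemma~4.2]{GarKin22}, which is a De Giorgi-type local boundedness estimate for weak solutions to homogeneous mixed equations of $p$-Laplacian type. Applied to $\bv$ on a pair of concentric balls $B_\rho \subset B_R \subset B_{r}$ with $r/2 \le \rho < R \le r$, it yields an inequality of the form
\[
\sup_{B_\rho} |\bv| \;\le\; \frac{c}{(R-\rho)^{n/p}}\left(\int_{B_R}|\bv|^{p}\dx\right)^{1/p} + \tail(\bv;\rho)\,.
\]
Since $\tail(\bv;\rho)$ is, by an elementary splitting of the defining integral, controlled by $c\,\tail(\bv;r/2)$ whenever $\rho \in [r/2,r]$ (one only loses a constant depending on $n,s,p$ when comparing the two tails on concentric balls of comparable radii), we obtain
\[
\sup_{B_\rho} |\bv| \;\le\; \frac{c}{(R-\rho)^{n/p}}\|\bv\|_{L^{p}(B_r)} + c\,\tail(\bv;r/2)
\]
valid uniformly in $\rho,R$ as above.

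The second ingredient is the standard interpolation-iteration lemma recorded as \cite[Remark~6.12]{G03}. The point is to reduce the exponent on the $L^p$-norm to an $L^1$-norm. One writes
\[
\|\bv\|_{L^p(B_R)} \;\le\; \bigl(\sup_{B_R}|\bv|\bigr)^{(p-1)/p}\|\bv\|_{L^{1}(B_R)}^{1/p}\,,
\]
substitutes into the previous inequality, and applies Young's inequality to absorb a fraction of $\sup_{B_R}|\bv|$. The resulting inequality, setting $\Phi(\rho) \coloneqq \sup_{B_\rho}|\bv|$, has the form
\[
\Phi(\rho) \;\le\; \tfrac{1}{2}\Phi(R) + \frac{c}{(R-\rho)^{n}}\|\bv\|_{L^{1}(B_r)} + c\,\tail(\bv;r/2)\,,\qquad r/2 \le \rho < R \le r\,.
\]
The iteration lemma \cite[Remark~6.12]{G03} then absorbs the first term on the right and yields
\[
\Phi(r/2) \;\le\; \frac{c}{r^{n}}\|\bv\|_{L^{1}(B_r)} + c\,\tail(\bv;r/2) \;=\; c\mean{B_r}|\bv|\dx + c\,\tail(\bv;r/2)\,,
\]
which is the claim.

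\textbf{Main obstacle.} No real obstacle: both ingredients are classical and well suited. The only delicate point is the bookkeeping of the tail under the variation of the inner radius $\rho \in [r/2,r]$, which is handled by a trivial comparison of the two $\tail$-quantities, losing only a universal constant.
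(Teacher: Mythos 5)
Your proposal is correct and follows exactly the approach the paper itself indicates: the paper's ``proof'' of Lemma~\ref{lem:bdd} is precisely the sentence preceding it, citing the combination of \cite[Lemma~4.2]{GarKin22} (local boundedness with an $L^p$-average and a tail) and \cite[Remark~6.12]{G03} (the interpolation/iteration device to lower the $L^p$-average to an $L^1$-average). You have simply spelled out the intermediate steps — the translation invariance justifying the reduction to $k=0$, the elementary comparison $\tail(\bv;\rho)\lesssim\tail(\bv;r/2)$ for $\rho\in[r/2,r]$, the interpolation $\|\bv\|_{L^p(B_R)}\le(\sup_{B_R}|\bv|)^{(p-1)/p}\|\bv\|_{L^1(B_R)}^{1/p}$ followed by Young, and the iteration lemma — all of which are sound.
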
 
We also have the Caccioppoli-type estimate for \eqref{eq:homo-mixed} provided e.g., in \cite[Lemma~3.4]{ByunSong}.
\begin{lem}
Let $v$ be a weak solution to \eqref{eq:homo-mixed} under assumptions \eqref{growth}--\eqref{kernel.growth} with $p>1$ and $s \in (0,1)$.
Then there exists  $c = c(\data)$ such that for any $B_{r} \Subset \Omega$ and $k \in \R$, it holds
\begin{align}\label{CCP1.ineq}
 \mean{B_{r/2}} |Dv|^{p} \dx +  \int_{B_{r/2}} \mean{B_{r/2}} \frac{|v(x) - v(y)|^{p}}{|x-y|^{n+ sp}} \dx \dy \leq c \left[ \mean{B_{r}} \frac{|v-k|}{r} \dx + \frac{1}{r}\tail(v-k; r/2) \right]^{p}\,.
\end{align}
\end{lem}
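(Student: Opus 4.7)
The plan is to test the weak formulation of \eqref{eq:homo-mixed} against the standard truncated test function $\varphi=(v-k)\eta^{p}$, where $\eta\in C_{c}^{\infty}(B_{3r/4})$ is a cutoff with $\eta\equiv 1$ on $B_{r/2}$, $0\le\eta\le 1$, and $|D\eta|\le c/r$. Since $\varphi\in W^{1,p}_{0}(\Omega)$ (extended by zero outside) and is admissible for both the local and nonlocal parts, plugging $\varphi$ in the weak formulation yields
\[
\int_{B_{r}} A(Dv)\cdot D\varphi\dx + \iint_{\rn\times\rn} |v(x)-v(y)|^{p-2}(v(x)-v(y))(\varphi(x)-\varphi(y))K(x,y)\dx\dy=0\,,
\]
and the proof reduces to showing that both the $|Dv|^{p}$ term on $B_{r/2}$ and the Gagliardo-type nonlocal seminorm over $B_{r/2}\times B_{r/2}$ appear with positive sign on the left, while all remaining terms can be bounded by the right-hand side of the claim.

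For the local term I would compute $D\varphi=Dv\,\eta^{p}+p\eta^{p-1}(v-k)D\eta$, and use the ellipticity bound $A(Dv)\cdot Dv\gtrsim |Dv|^{p}$ from \eqref{growth} to produce the main good term $\nu_{A}\int |Dv|^{p}\eta^{p}\dx$. The crossed term $\int A(Dv)\cdot D\eta\,\eta^{p-1}(v-k)\dx$ is controlled by the growth bound $|A(Dv)|\le L_{A}|Dv|^{p-1}$ combined with Young's inequality in the form $ab\le \varepsilon a^{p}+c(\varepsilon)b^{p}$, which lets us absorb a fraction of $\int|Dv|^{p}\eta^{p}\dx$ into the left-hand side and leaves the standard contribution $c\,r^{-p}\int_{B_{r}}|v-k|^{p}\dx$. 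This is exactly the local part of the target estimate.

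For the nonlocal term I would split $\rn\times\rn=(B_{r}\times B_{r})\cup(B_{r}\times B_{r}^{c})\cup(B_{r}^{c}\times B_{r})\cup(B_{r}^{c}\times B_{r}^{c})$; the last piece vanishes because $\varphi\equiv 0$ on $B_{r}^{c}$. On $B_{r}\times B_{r}$ I would use the pointwise algebraic inequality (standard in the fractional $p$-Laplacian literature, cf.\ \cite{DKP14,DKP16})
\[
|a-b|^{p-2}(a-b)(a\eta(x)^{p}-b\eta(y)^{p}) \gtrsim |a-b|^{p}\min\{\eta(x),\eta(y)\}^{p} - c|a-b|^{p-1}|\eta(x)-\eta(y)|\,(|a|\vee|b|)\,,
\]
applied with $a=v(x)-k$, $b=v(y)-k$, combined with $K(x,y)\gtrsim |x-y|^{-n-sp}$ to recover the good seminorm term $\iint_{B_{r/2}\times B_{r/2}}|v(x)-v(y)|^{p}|x-y|^{-n-sp}\dx\dy$, and with $|\eta(x)-\eta(y)|\le c|x-y|/r$ together with Young's inequality to dispose of the error. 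On the two symmetric cross terms, symmetry of $K$ and the fact that $\varphi(y)=0$ for $y\in B_{r}^{c}$ leaves, after expansion and estimation, terms of the form
\[
\int_{B_{3r/4}}|v(x)-k|\eta(x)^{p}\left(\int_{\rn\setminus B_{r}}\frac{|v(y)-k|^{p-1}}{|x-y|^{n+sp}}\dy\right)\dx\,,
\]
and a symmetric one with the roles exchanged, both of which are bounded via $|x-y|\gtrsim|y-x_{0}|$ for $x\in B_{3r/4}$, $y\in B_{r}^{c}$, thereby producing $r^{-p}\tail(v-k;r/2)^{p-1}$ against the average of $|v-k|$; a final use of Young's inequality converts this into the claimed $[r^{-1}\tail(v-k;r/2)]^{p}$.

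The main technical obstacle is the nonlocal pointwise inequality: choosing signs and the right exponents so that after Young's inequality the negative contributions can be absorbed into either the Gagliardo seminorm on the left or into the local gradient term, uniformly in $s\in(0,1)$ and $p>1$. Once this is handled, averaging by $|B_{r/2}|$ and combining the local and nonlocal contributions yields exactly the stated inequality with a constant $c=c(\data)$.
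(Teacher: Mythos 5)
Your outline reproduces the \emph{basic} Caccioppoli inequality: testing against $(v-k)\eta^{p}$ and absorbing the cross terms via Young's inequality yields
\[
\mean{B_{r/2}}|Dv|^{p}\dx+\int_{B_{r/2}}\mean{B_{r/2}}\frac{|v(x)-v(y)|^{p}}{|x-y|^{n+sp}}\dx\dy
\;\lesssim\;\mean{B_{r}}\Big(\frac{|v-k|}{r}\Big)^{p}\dx+\Big[\frac{1}{r}\tail(v-k;r/2)\Big]^{p},
\]
i.e.\ with the $L^{p}$-average of $|v-k|$ on the right. The lemma, however, asserts the stronger estimate with $\big(\mean{B_{r}}\frac{|v-k|}{r}\dx\big)^{p}$, i.e.\ the $L^{1}$-average raised to the $p$-th power, which by Jensen is smaller. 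Your statement that the contribution $c\,r^{-p}\int_{B_r}|v-k|^{p}\dx$ ``is exactly the local part of the target estimate'' is therefore false, and the same mismatch reappears in the error produced by the nonlocal pointwise inequality (after using $|\eta(x)-\eta(y)|\le c|x-y|/r$ and Young one again gets an $L^{p}$-average of $|v-k|$, not the $p$-th power of the $L^{1}$-average).

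The missing ingredient is the local sup-bound stated just before this lemma: by Lemma~\ref{lem:bdd}, $\sup_{B_{\rho/2}}|v-k|\lesssim\mean{B_{\rho}}|v-k|\dx+\tail(v-k;\rho/2)$. One should first derive the basic Caccioppoli between $B_{r/2}$ and, say, $B_{3r/4}$ with the $L^{p}$-average on the right, and then upgrade it by writing
\[
\mean{B_{3r/4}}|v-k|^{p}\dx\le\big(\sup_{B_{3r/4}}|v-k|\big)^{p}
\lesssim\Big[\mean{B_{r}}|v-k|\dx+\tail(v-k;r/2)\Big]^{p},
\]
where the sup is controlled by Lemma~\ref{lem:bdd} applied on a suitable intermediate pair of balls. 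Without this step the argument proves a genuinely weaker inequality than \eqref{CCP1.ineq}. (The remaining parts of your plan — the splitting of $\rn\times\rn$, the DKP-type pointwise inequality, the geometric bound $|x-y|\gtrsim|y-x_0|$ for the cross terms — are the standard route and are fine; the paper itself does not reprove the lemma but refers to \cite[Lemma~3.4]{ByunSong}.)
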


Using the argument similar to \cite{KuMiSi}, one can modify the local H\"older regularity result in  \cite{GarKin22} to involve the exponent $1$ (instead of $p$) in the first term on the right hand side (cf. also \cite[Lemma~3.3]{ByunSong}).

\begin{lem}\label{lem:mix-hol}
Let $v$ be a weak solution to \eqref{eq:homo-mixed} under assumptions \eqref{growth}--\eqref{kernel.growth} with $p>1$ and $s \in (0,1)$. 
Then $v$ is locally H\"older continuous in $\Omega$.
In particular, there exist constants $\alpha \in (0,1)$ and $c \geq 1$, both depending only on $\data$, such that for any concentric balls $B_{\rho} \subset B_{2r} \Subset \Omega$ and $k \in \R$, it holds
\begin{equation}\label{mix.hol}
\osc_{B_{\rho}} v \leq c \left( \frac{\rho}{r} \right)^{\alpha} \left[ \mean{B_{2r}} |v- k| \dx + \tail(v-k; r) \right]\,.
\end{equation}
\end{lem}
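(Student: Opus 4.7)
The idea is to upgrade the Hölder estimate already proved in \cite{GarKin22} -- which comes with an $L^{p}$-average (or $L^{\infty}$-norm) of $v-k$ on its right-hand side -- to the $L^{1}$-averaged form \eqref{mix.hol}, by combining it with the local boundedness estimate of Lemma~\ref{lem:bdd}. This is the standard modification carried out in \cite{KuMiSi} for the purely nonlocal fractional $p$-Laplacian, and it readily adapts to the mixed setting because \eqref{CCP1.ineq} and Lemma~\ref{lem:bdd} already provide the two tail-compatible ingredients needed.

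Concretely, I would first invoke \cite{GarKin22}, applied to the translate $v-k$, to get the intermediate estimate
\[
\osc_{B_{\rho}}v \le c\Big(\frac{\rho}{r}\Big)^{\alpha}\!\left[\Big(\mean{B_{r}}|v-k|^{p}\dx\Big)^{1/p}+\tail(v-k;r)\right]
\]
for concentric $B_{\rho}\subset B_{r}\Subset\Omega$, with $\alpha=\alpha(\data)\in(0,1)$ and $c=c(\data)$. To lower the integrability exponent on the mean term, I would use the elementary interpolation $|v-k|^{p}\le(\sup_{B_{r}}|v-k|)^{p-1}|v-k|$, together with Lemma~\ref{lem:bdd} applied at the enlarged scale,
\[
\sup_{B_{r}}|v-k|\le c\mean{B_{2r}}|v-k|\dx+\tail(v-k;r),
\]
followed by Young's inequality $a^{(p-1)/p}b^{1/p}\le a+b$, to obtain
\[
\Big(\mean{B_{r}}|v-k|^{p}\dx\Big)^{1/p}\le c\Big[\mean{B_{2r}}|v-k|\dx+\tail(v-k;r)\Big].
\]

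Substituting this back into the intermediate estimate yields \eqref{mix.hol}, up to a harmless rescaling $r\mapsto r/2$ to match the $B_{\rho}\subset B_{2r}\Subset\Omega$ configuration in the statement; local Hölder continuity then follows by Campanato's criterion, e.g.\ with the choice $k=(v)_{B_{2r}}$. The only routine-but-delicate point is the bookkeeping of tails at different scales: one must verify that $\tail(v-k;r/2)\le c\tail(v-k;r)+c\sup_{B_{r}}|v-k|$ by splitting the defining integral on $B_{r}\setminus B_{r/2}$ and $\mathbb{R}^{n}\setminus B_{r}$, the sup being itself controlled via Lemma~\ref{lem:bdd}; this guarantees that every tail appearing at the intermediate scale is absorbed into $\tail(v-k;r)$ plus the mean on $B_{2r}$. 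No genuine obstacle is expected, since the bulk of the analytic work is already contained in \cite{GarKin22} and in Lemma~\ref{lem:bdd}.
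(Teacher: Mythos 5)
Your proposal follows exactly the route the paper indicates (the paper gives no proof, only the one-line remark that one should ``modify the local H\"older regularity result in \cite{GarKin22} to involve the exponent $1$ (instead of $p$) in the first term on the right hand side'' via the argument of \cite{KuMiSi}, cf.\ also \cite[Lemma~3.3]{ByunSong}). Your interpolation step $|v-k|^{p}\le(\sup_{B_r}|v-k|)^{p-1}|v-k|$, combined with Lemma~\ref{lem:bdd} at scale $2r$ and Young's inequality, is precisely the standard device for lowering the integrability exponent on the mean-oscillation term, and your tail-bookkeeping remark is the only genuinely delicate point, which you have identified correctly. The argument is correct and matches the paper's intended (though unwritten) proof.
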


We shall also make use of some regularity properties of weak solutions to the homogeneous local equation
\begin{equation}\label{eq:homo-local}
    -\dv\, A(Dw)=0\qquad\text{in }\ \Omega\,.
\end{equation}
\begin{lem}\label{localeq.ed}
    Let $w \in W^{1,p}_{\loc}(\Omega)$ be a weak solution to \eqref{eq:homo-local} under assumptions \eqref{growth} with $p>1$, and $B_{\rho} \subset B_{r} \Subset \Omega$ are concentric balls.
    Then for any $q \in [1, \infty)$, there exist constants $\beta_w = \beta_{w}(\data) \in (0,1)$ and $c= c(\data, q) \ge 1$ such that
    \begin{equation}\label{ex-decay}
        \mean{B_{\rho}}|Dw-(Dw)_{B_{\rho}}|^{q} \dx \le c\left(\frac{\rho}{r}\right)^{\beta_w q}\mean{B_{r}}|Dw-(Dw)_{B_{r}}|^{q} \dx\,.
    \end{equation}
\end{lem}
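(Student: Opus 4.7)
The plan is to deduce this excess decay from the classical interior $C^{1,\beta_w}$-regularity of weak solutions to the local $p$-Laplace-type equation $-\dv A(Dw)=0$ under the structure assumptions \eqref{growth}. The foundational theorems of Uhlenbeck, Ural'tseva, Evans, DiBenedetto, Tolksdorf, and Lieberman supply a universal Hölder exponent $\beta_w=\beta_w(\data)\in(0,1)$ together with quantitative interior Lipschitz and Hölder estimates for $Dw$: specifically, the bound $\|Dw\|_{L^{\infty}(B_{r/2})}\le c(\mean{B_r}|Dw|^q\dx)^{1/q}$ combined with the pointwise oscillation decay $\osc_{B_\rho}Dw\le c(\rho/r)^{\beta_w}\|Dw\|_{L^{\infty}(B_{r/2})}$ for $\rho\le r/4$.

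I would then split the analysis into two regimes of $\rho$. For $\rho\in[r/4,r]$ the factor $(\rho/r)^{\beta_w q}$ is bounded below by a universal positive constant, and the desired inequality is immediate from Jensen's inequality together with $|B_r|/|B_\rho|\le 4^n$:
\begin{equation*}
\mean{B_\rho}|Dw-(Dw)_{B_\rho}|^q\dx \le \mean{B_\rho}|Dw-(Dw)_{B_r}|^q\dx \le 4^n\mean{B_r}|Dw-(Dw)_{B_r}|^q\dx.
\end{equation*}
For $\rho<r/4$, the idea is to apply the above Hölder oscillation decay not to $w$ itself but to the translated map $\bar w(x):=w(x)-(Dw)_{B_r}\cdot x$. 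Since $\bar w$ differs from $w$ by a smooth linear function, $D\bar w=Dw-(Dw)_{B_r}$ has exactly the same oscillation and Hölder seminorm as $Dw$, yet its $L^q$-average on $B_r$ coincides with the desired excess. The transferred bounds then give
\begin{equation*}
\osc_{B_\rho}Dw=\osc_{B_\rho}D\bar w\le c\Bigl(\tfrac{\rho}{r}\Bigr)^{\beta_w}\left(\mean{B_{r/2}}|D\bar w|^q\dx\right)^{1/q}\le c\Bigl(\tfrac{\rho}{r}\Bigr)^{\beta_w}\left(\mean{B_r}|Dw-(Dw)_{B_r}|^q\dx\right)^{1/q}.
\end{equation*}
Raising to the $q$-th power and using the elementary bound $\mean{B_\rho}|Dw-(Dw)_{B_\rho}|^q\dx\le (\osc_{B_\rho}Dw)^q$ concludes.

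The main subtle point is that $\bar w$ solves the shifted equation $-\dv A(D\bar w+(Dw)_{B_r})=0$ whose associated vector field $\tilde A(z):=A(z+(Dw)_{B_r})$ no longer satisfies $\tilde A(0)=0$, so the textbook $C^{1,\beta_w}$-estimate must be invoked in a scale-invariant manner to keep the constants depending only on $\data$ and $q$. This is classical and can be justified either via a compactness/blow-up argument exploiting the uniform monotonicity and growth in \eqref{growth}, or by directly noting that the interior Hölder seminorm of $Dw$ on $B_{r/2}$ is invariant under subtracting a constant vector field from $Dw$, while the accompanying $L^q$-Lipschitz bound transfers to the centered quantity with the same structure constants. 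All other steps are routine Campanato-type manipulations.
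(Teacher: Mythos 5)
Your argument has a genuine gap at the step where, for $\rho < r/4$, you write
\begin{equation*}
\osc_{B_\rho}D\bar w\le c\left(\frac{\rho}{r}\right)^{\beta_w}\left(\mean{B_{r/2}}|D\bar w|^q\dx\right)^{1/q}.
\end{equation*}
The quantitative $L^\infty$-bound and oscillation decay that classical $C^{1,\beta_w}$-theory supplies hold for solutions of $-\dv A(Dw)=0$ under \eqref{growth}; they do not automatically transfer to $\bar w$, because the shifted field $\tilde A(z)\coloneqq A(z+(Dw)_{B_r})$ fails \eqref{growth}: for $p>2$, say, the degenerate ellipticity $\partial\tilde A(z)\xi\cdot\xi\ge\nu_A|z|^{p-2}|\xi|^2$ is violated whenever $|z+(Dw)_{B_r}|\ll|z|$. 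What applying the standard estimates to $w$ actually yields is $\osc_{B_\rho}Dw\le c(\rho/r)^{\beta_w}\sup_{B_{r/2}}|Dw|$, and $\sup_{B_{r/2}}|Dw|$ is not dominated by $(\mean{B_r}|Dw-(Dw)_{B_r}|^q\dx)^{1/q}$: take $Dw$ nearly constant and large, so the excess is tiny while $\sup|Dw|$ is not. The translation-invariance you invoke is valid for the H\"older \emph{seminorm} of $Dw$, but not for the sup-to-average bound; in its centered form $\sup_{B_{r/2}}|Dw-(Dw)_{B_r}|\le c\left(\mean{B_r}|Dw-(Dw)_{B_r}|^q\dx\right)^{1/q}$ this is precisely the nontrivial ``strong excess estimate,'' whose proof requires a degenerate/nondegenerate dichotomy and iteration and essentially amounts to proving \eqref{ex-decay} itself, so the step is circular.

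The paper avoids this by citing \cite[Theorem~3.3]{DuMiJFA2010} for $p\ge 2$ and \cite[Theorem~3.1]{DuMiAJM2011} for $1<p<2$ to obtain \eqref{ex-decay} for $q=1$ (those references carry out exactly this delicate iteration), and then bootstraps to every $q\ge1$ via the Campanato embedding, much as the final lines of your argument do. Your $\rho\in[r/4,r]$ case and the passage from an $L^1$-level Campanato/H\"older bound to general $q$ via $\osc$ and Jensen are both fine; what is missing is a justification of the $q=1$ base case, for which the cited references (or an explicit dichotomy argument) are needed.
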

In fact, \eqref{ex-decay} for $q=1$ is obtained in \cite[Theorem~3.3]{DuMiJFA2010} for $p\geq 2$ and in \cite[Theorem~3.1]{DuMiAJM2011} for $1<p<2$. 
Then the Campanato embedding implies the H\"older regularity of $Dw$, and so \eqref{ex-decay} holds for every $q \geq 1$.
One can find further excess decay estimates regarding $A(Dw)$ and $V(Dw)$ and under natural structural assumptions in \cite[Theorems~4.3~and~4.4]{BSY}. 

\section{Quantitative gradient H\"older regularity for homogeneous mixed equations}\label{sec:quant-hold}
With $m \in (0,1-s)$ being a free parameter, let us define 
\begin{equation}
    \label{bar a def}
    \bar a_1 \coloneqq \begin{cases}\frac{1-s}{p-1}\quad& \text{if } p\geq 2\,, \\
        m\quad& \text{if } 1<p<2\,,
    \end{cases}\qquad\qquad
      \bar a_2 \coloneqq \begin{cases}\frac{1}{p-1}\quad& \text{if } p\geq 2\, ,\\
        \frac{1-m(2-p)}{p-1}\quad &\text{if }1<p<2\,,
    \end{cases}
\end{equation}
Note that $\bar a_1 < \bar a_2$. Later in the proof of Theorem~\ref{theo:homo-mixed-hol}, we will find a lower bound of $\bar{a}_{2}-\bar{a}_{1}>0$, which plays an important role in the control of tails over a sequence of concentric balls.

\subsection{Comparison between homogeneous mixed problem and homogeneous local problem} Let $v \in W^{1,p}_{\loc}(\Omega) \cap W^{s,p}(\rn)$ be a weak solution to \eqref{eq:homo-mixed} under assumptions \eqref{growth}--\eqref{kernel.growth} with $p > 1$ and $s \in (0,1)$. We fix a ball $B_{r/4} = B_{r/4}(x_{0}) \Subset \Omega$ and introduce the weak solution $w$ to
\begin{equation}\label{eq:homo-local-sec4}
\left\{
\begin{aligned}
- \mathrm{div}\, A(Dw) & = 0 &\text{in }& B_{r/4}\,,\\
w & = v &\text{on }& \partial B_{r/4}\,.
\end{aligned} 
\right.
\end{equation}
Then by \cite[Theorem~6.1]{G03}, $w$ is a quasi-minimizer of the $p$-Dirichlet functional. In other words, there exists $c = c(\data)$ such that
\begin{equation}\label{p.min}
    \int_{B_{r/4}} |Dw|^{p} \dx \leq c \int_{B_{r/4}} |D \varphi|^p \dx \qquad \text{for any } \varphi \in w + W^{1,p}_{0}(B_{r/4})\,.
\end{equation}

We are to establish a basic comparison estimate between \eqref{eq:homo-mixed} and \eqref{eq:homo-local-sec4}. The approach is quite similar to \cite[Lemma~4.2]{DFM-mixed}. 

\begin{lem}\label{mix.loc.comp}
    Let $v \in W^{1,p}_{\loc}(\Omega) \cap W^{s,p}(\rn)$ be a weak solution to \eqref{eq:homo-mixed} and $w \in v + W^{1,p}_{0}(B_{r/4})$ be the weak solution to \eqref{eq:homo-local-sec4} under assumptions \eqref{growth}--\eqref{kernel.growth} with $p > 1$ and $s \in (0,1)$. Let $m \in (0, 1-s)$ be arbitrary. Recall $q_0=\max\{p-1,1\}$ and $\bar a_1,\bar a_2$ from~\eqref{bar a def}.    Then there exists $c=c(\data)$ such that
    \begin{align}\label{mix.loc.est1}
        \mean{B_{r/4}(x_0)} |Dv-Dw|^p \dx
        & \leq c r^{{\bar a_1 p}} \left( \mean{B_{r}(x_0)} |Dv|^{q_0} \dx \right)^{p/q_0} + c r^{{\bar a_2 p}} \left[ \frac{1}{r^{p'}} \tail(v-(v)_{B_{r}(x_0)}; x_0, r) \right]^{p}\,.
    \end{align}  
\end{lem}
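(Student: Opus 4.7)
The plan is to take $\psi := v-w$ (extended by zero outside $B_{r/4}$) as a common test function in both weak formulations. Since $w=v$ on $\partial B_{r/4}$, we have $\psi\in\mathcal{X}^{1,p}_{0}(B_{r/4})$, and substituting $\psi$ into the mixed weak form for $v$ and the local weak form for $w$ and subtracting gives
\[
\int_{B_{r/4}} (A(Dv)-A(Dw))\cdot D\psi\,dx = -\mathcal{N},
\]
where $\mathcal{N}$ is the nonlocal bilinear form associated to $\mathcal{L}v$ tested against $\psi$. By Lemma~\ref{lem:mono}, the left-hand side is $\gtrsim \int_{B_{r/4}}|V(Dv)-V(Dw)|^2\,dx$, which for $p\geq 2$ directly controls $\|Dv-Dw\|_{L^p(B_{r/4})}^p$. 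For $1<p<2$ I would convert the $V$-bound into an $L^p$-bound via H\"older with exponents $2/p$ and $2/(2-p)$, handling the resulting mass factor $\int(|Dv|+|Dw|)^p\,dx$ by the $p$-quasi-minimality \eqref{p.min} of $w$ together with the Caccioppoli estimate \eqref{CCP1.ineq} for $v$; the free parameter $m\in(0,1-s)$ enters here through the subsequent Young inequality used to balance the mass factor against the $V$-term.

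Next I would split $\mathcal{N}=\mathcal{N}_{\mathrm{in}}+2\mathcal{N}_{\mathrm{tail}}$ using symmetry of $K$ and $\psi\equiv 0$ outside $B_{r/4}$, so that $\psi(x)-\psi(y)=\psi(x)$ in the tail piece. H\"older with exponents $p/(p-1)$ and $p$ separates the $v$- and $\psi$-factors. For $\mathcal{N}_{\mathrm{in}}$, the $v$-factor $\big(\int\int_{B_{r/4}\times B_{r/4}}|v(x)-v(y)|^p|x-y|^{-(n+sp)}\,dxdy\big)^{(p-1)/p}$ is controlled by the Caccioppoli inequality \eqref{CCP1.ineq} applied on $B_{r/2}$ with $k=(v)_{B_r}$, followed by Poincar\'e to convert the average of $|v-(v)_{B_r}|$ into $r\mean_{B_r}|Dv|$, while the tail piece becomes $\tail(v-(v)_{B_r};x_0,r/4)$. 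The $\psi$-factor is exactly $c r^{1-s}\|D\psi\|_{L^p(B_{r/4})}$ by the Sobolev inequality \eqref{Sob.ineq}.

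For $\mathcal{N}_{\mathrm{tail}}$ I would rewrite $\psi(x)=\psi(x)-\psi(y)$ for $y\in B_{r/4}^c$ and apply the same H\"older. The $\psi$-factor then becomes the half-Gagliardo seminorm of the zero extension, which by the standard zero-extension version of \eqref{Sob.ineq} (proved via a fractional Hardy inequality applied to $\int_{B_{r/4}}|\psi|^p\,d(x,\partial B_{r/4})^{-sp}\,dx$) is again $\leq cr^{1-s}\|D\psi\|_{L^p(B_{r/4})}$. The $v$-factor is bounded via $|v(x)-v(y)|^p\leq c|v(x)-(v)_{B_r}|^p+c|v(y)-(v)_{B_r}|^p$ combined with the geometric split $B_{r/4}^c=(B_{r/2}\setminus B_{r/4})\cup B_{r/2}^c$: on $B_{r/2}^c$ one has $|x-y|\approx|y-x_0|$, producing a direct bound by $\tail(v-(v)_{B_r};x_0,r)^p$; on the annulus $B_{r/2}\setminus B_{r/4}$ and for the $v(x)$-contribution I would invoke the sup bound of Lemma~\ref{lem:bdd} together with an elementary rescaling converting $\tail(\cdot;r/4)$ into $\tail(\cdot;r)$ up to a term of order $r^{1-s}(\mean_{B_r}|Dv|^{p-1})^{1/(p-1)}$, which is later absorbed via Poincar\'e.

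Collecting everything yields $|\mathcal{N}|\leq c\|D\psi\|_{L^p(B_{r/4})}\cdot X$, with $X$ of order $r^{\alpha}\big(r\mean_{B_r}|Dv|+\tail(v-(v)_{B_r};x_0,r)\big)^{p-1}$ for an explicit $\alpha=\alpha(n,s,p)$. Young's inequality with small $\varepsilon$ then lets me absorb $\varepsilon\|D\psi\|_{L^p}^p$ into the left-hand side; dividing by $|B_{r/4}|$ and using the Jensen inequality $(\mean|Dv|)^p\leq(\mean|Dv|^{p-1})^{p/(p-1)}$ for $p\geq 2$ (or keeping $(\mean|Dv|)^p$ directly when $q_0=1$) gives \eqref{mix.loc.est1} with the exponents in \eqref{bar a def}. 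I expect the main obstacle to be the $v$-factor of $\mathcal{N}_{\mathrm{tail}}$: the singularity of $|x-y|^{-(n+sp)}$ as $x\to\partial B_{r/4}$ rules out a naive uniform sup bound on $|v(x)-(v)_{B_r}|$ and forces a careful coupling with the fractional Hardy behaviour of $\psi$ near $\partial B_{r/4}$ in order to extract exactly the scaling dictated by \eqref{bar a def}. In the subquadratic case, the secondary obstacle is the $V$-to-$L^p$ conversion on the left-hand side, whose balancing Young step is precisely where $m\in(0,1-s)$ is introduced, ultimately producing the different exponents $\bar a_1=m$ and $\bar a_2=(1-m(2-p))/(p-1)$.
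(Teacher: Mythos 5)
Your setup, near-field estimate, and the handling of the left-hand side (including the $V$-to-$L^p$ conversion for $p<2$ via \eqref{p.min}, \eqref{CCP1.ineq} and a Young step that introduces $m$) all match the paper. The genuine gap is in the tail piece. You propose to apply H\"older with exponents $p'$ and $p$ across $\mathcal{N}_{\mathrm{tail}}$, so that the $v$-factor becomes $\bigl(\int_{\rn\setminus B_{r/4}}\int_{B_{r/4}}|v(x)-v(y)|^{p}|x-y|^{-n-sp}\,dx\,dy\bigr)^{(p-1)/p}$. After the split $|v(x)-v(y)|^{p}\lesssim|v(x)-(v)_{B_r}|^{p}+|v(y)-(v)_{B_r}|^{p}$ and the geometric bound $|x-y|\gtrsim|y-x_0|$, the far $v(y)$-contribution is $r^{n}\int_{\rn\setminus B_{r/2}}|v(y)-(v)_{B_r}|^{p}|y-x_0|^{-n-sp}\,dy$: this is an $L^{p}$-type nonlocal average (a ``snail''), not an $L^{p-1}$-type average (a ``tail''). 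A snail cannot be controlled by $\tail(v-(v)_{B_r};r)^{p}$ in general (on the finite measure $|y-x_0|^{-n-sp}\,dy$ on $\rn\setminus B_{r}$ the $L^{p}$ norm dominates the $L^{p-1}$ norm, not the other way around, and there is no global sup bound on $v$ to fix this). So your asserted ``direct bound by $\tail(\cdot)^{p}$'' does not hold, and the collected factor $X$ would involve a snail, not the tail appearing in \eqref{mix.loc.est1}. The paper's introduction flags exactly this point: for measure data one must work with tails, and the H\"older route inherited from the energy-range argument of \cite{DFM-mixed} must be abandoned in the far field.

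The paper avoids this entirely by not applying H\"older to the tail piece. It splits at $B_{r/2}$ rather than $B_{r/4}$ (so that for $x\in\operatorname{supp}h\subset B_{r/4}$ and $y\notin B_{r/2}$ one has $|x-y|\gtrsim r$; no boundary singularity and hence no fractional Hardy inequality is needed), bounds the kernel pointwise by $\max\{|v(x)-(v)_{B_{r/2}}|,|v(y)-(v)_{B_{r/2}}|\}^{p-1}$, and then treats the resulting $h$-factor as a plain $L^{1}$ average via Poincar\'e. This keeps the nonlocal $v$-integrand at power $p-1$, i.e.\ precisely a tail, which is the whole point. Your fractional-Hardy plan for the $\psi$-factor is therefore both insufficient (it fixes the $\psi$-side, not the $v$-side) and unnecessary once the split is moved to $B_{r/2}$. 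Replacing your $\mathcal{N}_{\mathrm{tail}}$ step with the pointwise-bound-plus-Poincar\'e argument at the $B_{r/2}$ scale would bring your proof in line with the paper.
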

\begin{proof}
We first observe the elementary estimates
\begin{align*} 
    \left[ \tail(v-(v)_{B_{r}}; r/2) \right]^{p-1}
    & \leq c \left[ \tail(v-(v)_{B_{r}}; r) \right]^{p-1} + c r^{p} \int_{B_{r} \setminus B_{r/2}} \frac{|v(x) - (v)_{B_{r}}|^{p-1}}{|x-x_{0}|^{n+sp}} \dx \notag \\
    & \leq c \left[ \tail(v-(v)_{B_{r}}; r) \right]^{p-1} + c r^{2 p-1-sp} \mean{B_{r}} \left(\frac{|v - (v)_{B_{r}}|}{r} \right)^{p-1} \dx \notag \\
    & \leq c \left[ \tail(v-(v)_{B_{r}}; r) \right]^{p-1} + c r^{2p-1-sp} \left(\mean{B_{r}} |Dv|^{q_0} \dx \right)^{(p-1)/q_0}
\end{align*}
and
\[ [\tail(v-(v)_{B_{r/2}}; r/2)]^{p-1} \leq c \left[\tail(v-(v)_{B_{r}};r)\right]^{p-1} + c r^{2p-1-sp} \left( \mean{B_{r}} |Dv|^{q_0} \dx \right)^{(p-1)/q_0}. \]
We will use these estimates in the proof without mentioning them.

We extend $w$ to $\rn$ by letting $w=v$ on $\rn \setminus B_{r/4}$.
Testing \eqref{eq:homo-mixed} and \eqref{eq:homo-local-sec4} against $h \coloneqq v-w$, we get
\begin{align}\label{mix.loc.1}
    \mean{B_{r/4}} |V(Dv)- V(Dw)|^{2} \dx
    & \leq c \mean{B_{r/4}} ( A(Dv) - A(Dw) )\cdot (Dv - Dw) \dx \notag \\
    & = - \frac{c}{|B_{r/4}|} \int_{\rn} \int_{\rn} |v(x) - v(y)|^{p-2}(v(x)-v(y))(h(x)-h(y))K(x,y) \dx \dy \notag\\
    & = - c\int_{B_{r/2}} \mean{B_{r/2}} |v(x) - v(y)|^{p-2}(v(x)-v(y))(h(x)-h(y))K(x,y) \dx \dy \notag \\
    & \quad - 2c \int_{\rn \setminus B_{r/2}} \mean{B_{r/2}} |v(x) - v(y)|^{p-2}(v(x)-v(y))(h(x)-h(y))K(x,y) \dx \dy \notag \\
    & \eqqcolon (\mathbf{I}) + (\mathbf{II})\,.
\end{align}
H\"older's inequality, \eqref{CCP1.ineq} with $k = (v)_{B_{r}}$, and the fact that $h=0$ outside $B_{r/4}$ give
\begin{align}\label{mix.loc.2}
    |(\mathbf{I})|
    & \leq c \left( \int_{B_{r/2}} \mean{B_{r/2}} \frac{|v(x)-v(y)|^{p}}{|x-y|^{n+sp}} \dx \dy \right)^{1-1/p}
    \left( \int_{B_{r/2}} \mean{B_{r/2}} \frac{|h(x)-h(y)|^{p}}{|x-y|^{n+sp}} \dx \dy \right)^{1/p} \notag \\
    \overset{\eqref{CCP1.ineq}, \eqref{Sob.ineq}}&{\leq}
    c r^{1-s} \left( \frac{1}{r} \mean{B_r}|v-(v)_{B_r}| \dx + \frac{1}{r}\tail(v-(v)_{B_{r}}; r/2) \right)^{p-1}
    \left( \mean{B_{r/4}} |Dh|^{p} \dx \right)^{1/p} \notag \\
    & \leq c r^{1-s} \left[ \left(\mean{B_{r}} |Dv|^{q_0} \dx\right)^{1/q_0} + \frac{1}{r}\tail(v-(v)_{B_{r}}; r) \right]^{p-1}
    \left( \mean{B_{r/4}} |Dv - Dw|^{p} \dx \right)^{1/p}\,.
\end{align}
On the other hand, we can estimate
\begin{align}\label{mix.loc.3}
    |(\mathbf{II})|
    & \leq c \int_{\rn \setminus B_{r/2}} \mean{B_{r/4}} \frac{\max\{|v(x)-(v)_{B_{r/2}}|, |v(y)-(v)_{B_{r/2}}|\}^{p-1} |h(x)|}{|x-y|^{n+sp}} \dx \dy \notag \\
    & \leq c \int_{\rn \setminus B_{r/2}} \mean{B_{r/4}} \frac{\max\{|v(x)-(v)_{B_{r/2}}|, |v(y)-(v)_{B_{r/2}}|\}^{p-1} |h(x)|}{|y-x_0|^{n+sp}} \dx \dy \notag \\
    & \leq c r^{-sp} \mean{B_{r/2}} |v-(v)_{B_{r/2}}|^{p-1} |h| \dx + c \int_{\rn \setminus B_{r/2}} \frac{|v(y)-(v)_{B_{r/2}}|^{p-1}}{|y-x_0|^{n+sp}} \dy \mean{B_{r/4}} |h| \dx \notag \\
    \overset{\eqref{mix.hol},\eqref{Tail}}&{\leq} c r^{-sp} \left( \mean{B_{r}} |v - (v)_{B_{r}}| \dx + \tail(v-(v)_{B_{r}};  r/2) \right)^{p-1} \mean{B_{r/4}} |h| \dx \notag \\
    & \quad + c r^{-p} [ \tail(v-(v)_{B_{r/2}}; r/2) ]^{p-1} \mean{B_{r/4}} |h| \dx \notag \\
    & \leq c \left[ r^{(1-s)p'} \left( \mean{B_{r}} |Dv|^{q_0} \dx \right)^{1/q_0} + \frac{1}{r} \tail(v-(v)_{B_{r}}; r)  \right]^{p-1} \mean{B_{r/4}} |Dv - Dw| \dx\,,
\end{align}
where in the last line, we have applied Poincar\'e's inequality. 
Combining \eqref{mix.loc.1}, \eqref{mix.loc.2}, and \eqref{mix.loc.3}, we have
\begin{align}\label{mix.loc.4}
\lefteqn{ \mean{B_{r/4}} |V(Dv) - V(Dw)|^{2} \dx } \notag \\
    & \leq c \left[ r^{1-s} \left( \mean{B_{r}} |Dv|^{q_0} \dx \right)^{(p-1)/q_0} + r \left[ \frac{1}{r^{p'}} \tail(v-(v)_{B_{r}}; r) \right]^{p-1} \right] \left( \mean{B_{r/4}} |Dv - Dw|^{p}\dx \right)^{1/p}\,.
\end{align}

For $p \geq 2$, applying the inequality
\[  |Dv - Dw|^{p} \leq c|V(Dv) - V(Dw)|^{2} \]
and Young's inequality to \eqref{mix.loc.4}, we obtain the desired estimate \eqref{mix.loc.est1}.

On the other hand, if $p <2$, then we proceed as
\begin{align*}
    & \mean{B_{r/4}}|Dv - Dw|^{p} \dx \\
    & \leq \left( \mean{B_{r/4}} |V(Dv) - V(Dw)|^{2} \dx \right)^{p/2} \left( \mean{B_{r/4}} (|Dv|+|Dw|)^{p} \dx \right)^{(2-p)/2} \\
    \overset{\eqref{p.min}, \eqref{CCP1.ineq}}&{\leq} c \left( \mean{B_{r/4}} |V(Dv) - V(Dw)|^{2} \dx \right)^{p/2} \left( \frac{1}{r} \mean{B_r}|v-(v)_{B_r}| \dx + \frac{1}{r}\tail(v-(v)_{B_{r}}; r/4) \right)^{p(2-p)/2} \\
    \overset{\eqref{mix.loc.4}}&{\leq} c \left[ r^{1-s} \left( \mean{B_{r}} |Dv|^{q_0} \dx \right)^{(p-1)/q_0} + r \left[ \frac{1}{r^{p'}} \tail(v-(v)_{B_{r}}; r) \right]^{p-1} \right]^{p/2} \left( \mean{B_{r/4}} |Dv - Dw|^{p} \dx \right)^{1/2} \\
    & \qquad \cdot \left[ \left( \mean{B_{r}} |Dv|^{q_0} \dx \right)^{(p-1)/q_0} + r\left[\frac{1}{r^{p'}}\tail(v-(v)_{B_{r}}; r) \right]^{p-1} \right]^{p(2-p)/[2(p-1)]}\,.
\end{align*}
Hence, further applications of Young's inequality give \eqref{mix.loc.est1} as follows:
\begin{align*}
    \mean{B_{r/4}}|Dv - Dw|^{p} \dx
    & \leq c r^{(1-s)p} \left( \mean{B_{r}} |Dv|^{q_0} \dx \right)^{p/q_0} + c r^{p'} \left[\frac{1}{r^{p'}}\tail(v-(v)_{B_{r}}; r) \right]^{p} \\
    & \quad + c r^{p} \left( \mean{B_{r}} |Dv|^{q_0} \dx \right)^{p(2-p)/q_0} \left[ \frac{1}{r^{p'}} \tail(v-(v)_{B_{r}};r ) \right]^{p(p-1)} \\
    & \leq c r^{mp} \left( \mean{B_{r}} |Dv|^{q_0} \dx \right)^{p/q_0} + c r^{(1-m(2-p)) p'} \left[\frac{1}{r^{p'}}\tail(v-(v)_{B_{r}}; r) \right]^{p}\,. \qedhere
\end{align*}
\end{proof}

\begin{rem}
\rm 
Note that the constant $c$ in \eqref{mix.loc.est1} is stable as $p \nearrow 2$.
Moreover, by taking $m$ close enough to $1-s$ in \eqref{mix.loc.est1} when $p \nearrow 2$, the estimate coincide  with the limit one (for $p=2$).
Later in Section~\ref{sec.pf}, we will choose the constant $m$  depending on the constant $\sigma \in (0, 1)$ given in Theorem~\ref{theo:intro} to deal with the sum of the tails, in a way that $m \searrow 0$ as $\sigma \nearrow 1$.
\end{rem}

\subsection{Excess decay estimates for homogeneous mixed equations} Combining Lemmas~\ref{localeq.ed} and \ref{mix.loc.comp} gives the following:
\begin{lem}\label{lem:ex1}
Let $v$ be a weak solution to \eqref{eq:homo-mixed} under assumptions \eqref{growth}--\eqref{kernel.growth} with $p > 1$ and $s \in (0,1)$. Recall $\beta_w = \beta_w(\data) \in (0,1)$ determined in Lemma~\ref{localeq.ed} and $q_0=\max\{p-1,1\}$. 
Let $m \in (0,1-s)$, and $\bar a_1,\bar a_2$ defined in~\eqref{bar a def}. 
For any $q \in [1,p]$, there exist $c_{1}=c_{1}(\data)$ and $c_{2}=c_{2}(\data)$ such that, whenever $B_{\rho} \subset B_{r} \Subset \Omega$ are concentric balls,
\begin{align*}
 \left( \mean{B_{\rho}} |Dv - (Dv)_{B_{\rho}}|^{q} \dx \right)^{1/q}  & \leq c_{1} \left( \frac{\rho}{r} \right)^{\beta_w} \left( \mean{B_{r}} |Dv - (Dv)_{B_{r}}|^{q} \dx \right)^{1/q}\\
& \quad + c_{2} \left( \frac{r}{\rho} \right)^{n/q_0} \left[ r^{\bar a_1} \left( \mean{B_{r}} |Dv|^{q_0} \dx \right)^{1/q_0} +  r^{\bar a_2}  \left[ \frac{1}{r^{p'}} \tail(v-(v)_{B_{r}}; r) \right] \right] \,.
\end{align*}
\end{lem}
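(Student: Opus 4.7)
The plan is to exploit the local harmonic replacement $w \in v + W^{1,p}_{0}(B_{r/4})$ from \eqref{eq:homo-local-sec4}, combining the excess decay for $Dw$ (Lemma~\ref{localeq.ed}) with the comparison estimate (Lemma~\ref{mix.loc.comp}). The case $r/16 \leq \rho \leq r$ is trivial after absorbing $(\rho/r)^{\beta_{w}}$ into $c_{1}$, so I will assume throughout that $\rho < r/16$; in particular $B_{\rho} \subset B_{r/4}$.

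First, by the minimizing property of the average and the triangle inequality,
\[
\left(\mean{B_{\rho}} |Dv - (Dv)_{B_{\rho}}|^{q} \dx\right)^{1/q} \leq c \left(\mean{B_{\rho}} |Dw - (Dw)_{B_{\rho}}|^{q} \dx\right)^{1/q} + c \left(\mean{B_{\rho}} |Dv - Dw|^{q} \dx\right)^{1/q}.
\]
For the $Dw$-term I apply Lemma~\ref{localeq.ed} at scales $\rho$ and $r/4$ to extract the factor $(\rho/r)^{\beta_{w}}$; then I replace $(Dw)_{B_{r/4}}$ by $(Dv)_{B_{r}}$ via triangle inequality and enlarge the average from $B_{r/4}$ to $B_{r}$ (costing only a constant $4^{n/q}$). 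This produces the first term of the claim together with an additional comparison remainder $\bigl(\mean{B_{r/4}}|Dv-Dw|^{q}\dx\bigr)^{1/q}$, which, since $q \leq p$, I reduce by Jensen's inequality to the $L^{p}$-estimate furnished by Lemma~\ref{mix.loc.comp}, yielding the two terms containing $r^{\bar a_{1}}$ and $r^{\bar a_{2}}$.

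The remaining comparison term $\mean{B_{\rho}}|Dv-Dw|^{q}\dx$ is where the amplification factor $(r/\rho)^{n/q_{0}}$ enters. Using $B_{\rho} \subset B_{r/4}$ I would pass from $B_{\rho}$ to $B_{r/4}$ with integrability exponent $\max\{q, q_{0}\}$ rather than $q$: either directly when $q \geq q_{0}$ (giving $(r/\rho)^{n/q} \leq (r/\rho)^{n/q_{0}}$ since $r/\rho > 1$), or by first applying Jensen in $B_{\rho}$ to lift the exponent from $q$ up to $q_{0}$ (the case $q < q_{0}$, which can only occur when $p > 2$). A final Jensen step from $\max\{q, q_{0}\}$ up to $p$, combined with Lemma~\ref{mix.loc.comp}, then produces exactly the two pieces with $r^{\bar a_{1}}$ and $r^{\bar a_{2}}$ dressed with $(r/\rho)^{n/q_{0}}$.

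The only real obstacle is the bookkeeping around Jensen's inequality needed to secure the uniform exponent $n/q_{0}$ in $(r/\rho)^{n/q_{0}}$ for all $q \in [1, p]$; beyond that, the argument is a standard comparison-plus-decay scheme in which the decay rate is inherited from the homogeneous local equation via Lemma~\ref{localeq.ed} while the error from passing to the mixed problem is absorbed by the right-hand side of Lemma~\ref{mix.loc.comp}.
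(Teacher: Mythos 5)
Your proposal is correct and follows essentially the same route the paper intends: the lemma is stated as a direct combination of Lemma~\ref{localeq.ed} (excess decay for the local comparison map $w$) and Lemma~\ref{mix.loc.comp} (comparison between $Dv$ and $Dw$), which is exactly the decomposition you use. The one small slip is that Lemma~\ref{localeq.ed} requires $B_\rho \subset B_{r'} \Subset B_{r/4}$, so the decay should be applied at scales $\rho$ and, say, $r/8$ rather than $\rho$ and $r/4$ (your restriction $\rho < r/16$ already accommodates this), after which one enlarges to $B_{r/4}$ and then to $B_r$ at a fixed constant cost; your handling of the exponent split $q \gtrless q_0$ via two Jensen passes (lift to $q_0$ in $B_\rho$ when $q < q_0$, enlarge at exponent $\max\{q,q_0\}$, then lift to $p$) correctly produces the uniform prefactor $(r/\rho)^{n/q_0}$.
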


We also need the following estimate for the tail, whose proof is similar to that of \cite[Theorem~3.8]{ByunSong}. 
Nonetheless, in our analysis, we need the precise power of $r$ multiplied by the average of $|Dv|$ in order to control this term later in Theorem~\ref{theo:homo-mixed-hol}.
It is not required in \cite[Theorem~3.8]{ByunSong} since the authors considered regularity estimates for solutions only.
\begin{lem}\label{lem:tail-dec-v}
Let $v$ be a weak solution to \eqref{eq:homo-mixed} under assumptions \eqref{growth}--\eqref{kernel.growth} with $p > 1$ and $s \in (0,1)$.
Recall $\alpha\in(0,1)$ from Lemma~\ref{lem:mix-hol}. 
There exists $c_{3}=c_{3}(\data)$ such that for any concentric balls $B_\rho(x_0)\subset B_r(x_0) \Subset \Omega$, it holds
    \begin{align}\label{tail-dec-v}
         \left[ \frac{1}{\rho^{p'}}\tail(v-(v)_{B_{\rho}};x_0,\rho) \right] \notag &\leq c_{3} \left[ 1 + r^{(1-s)p} \left( \frac{r}{\rho} \right)^{(1-\alpha)(p-1)+1} \right]^{1/(p-1)}  \left[ \frac{1}{r^{p'}}\tail(v- (v)_{B_{r}}; x_0,r) \right] \notag \\
        & \quad + c_{3} r^{[(1-s)p-1]/(p-1)} \left( \frac{r}{\rho} \right)^{[(1-\alpha)(p-1)+1]/(p-1)} \left( \mean{B_r} |Dv|^{q_0} \dx \right)^{1/q_0}\,.
    \end{align}
\end{lem}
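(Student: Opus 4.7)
The plan is to start from the identity $[\tail(v-(v)_{B_\rho};x_0,\rho)]^{p-1}=\rho^p\int_{\rn\setminus B_\rho}|v(x)-(v)_{B_\rho}|^{p-1}|x-x_0|^{-(n+sp)}\dx$ and split the integration domain as $(B_r\setminus B_\rho)\cup(\rn\setminus B_r)$. On the exterior piece I apply the triangle inequality $|v-(v)_{B_\rho}|^{p-1}\leq c|v-(v)_{B_r}|^{p-1}+c|(v)_{B_r}-(v)_{B_\rho}|^{p-1}$; the first summand integrates to $cr^{-p}[\tail(v-(v)_{B_r};x_0,r)]^{p-1}$, the ``baseline'' term, while the second contributes at most $cr^{-sp}|(v)_{B_r}-(v)_{B_\rho}|^{p-1}$ via the standard polar bound $\int_{\rn\setminus B_r}|x-x_0|^{-(n+sp)}\dx\leq cr^{-sp}$.

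To control the annulus integral, I would use the quantitative H\"older estimate from Lemma~\ref{lem:mix-hol}. For each $t\in[\rho,r/2]$ and $x\in\partial B_t(x_0)$, the mean $(v)_{B_\rho}$ lies between $\inf_{B_\rho}v$ and $\sup_{B_\rho}v\subset[\inf_{B_t}v,\sup_{B_t}v]$, so $|v(x)-(v)_{B_\rho}|\leq\osc_{B_t}v$. Applying Lemma~\ref{lem:mix-hol} with $k=(v)_{B_r}$ on the scale making $B_r=B_{2(r/2)}\Subset\Omega$, together with Poincar\'e's inequality, yields $\osc_{B_t}v\leq c(t/r)^\alpha M$, where $M\coloneqq r(\mean{B_r}|Dv|^{q_0}\dx)^{1/q_0}+\tail(v-(v)_{B_r};x_0,r/2)$. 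Inserting this bound into the polar representation reduces the annulus integral to $cM^{p-1}r^{-\alpha(p-1)}\int_\rho^r t^{\alpha(p-1)-sp-1}\dt$. A sign analysis of $\alpha(p-1)-sp$ (the borderline case gives a logarithm, absorbed into an arbitrarily small extra power of $r/\rho$) produces the uniform bound $cr^{-sp}(r/\rho)^{\max\{0,sp-\alpha(p-1)\}}\leq cr^{-sp}(r/\rho)^{(1-\alpha)(p-1)+1}$; the last inequality uses $r/\rho\geq1$ together with $sp-\alpha(p-1)\leq p-\alpha(p-1)=(1-\alpha)(p-1)+1$. The sub-annulus $[r/2,r]$ is handled by the crude bound $|x-x_0|^{-(n+sp)}\leq cr^{-(n+sp)}$ followed by Poincar\'e, and the same reasoning bounds $|(v)_{B_r}-(v)_{B_\rho}|\leq cM$.

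To conclude I reduce the half-radius tail via $[\tail(v-(v)_{B_r};x_0,r/2)]^{p-1}\leq c[\tail(v-(v)_{B_r};x_0,r)]^{p-1}+cr^{2p-1-sp}(\mean{B_r}|Dv|^{q_0}\dx)^{(p-1)/q_0}$, exactly as in the proof of Lemma~\ref{mix.loc.comp}, so that $M^{p-1}$ splits into the two desired pieces. Collecting all contributions, multiplying by $\rho^p$, and simplifying exponents --- observing that $\rho^p(r/\rho)^{(1-\alpha)(p-1)+1}=r^p(\rho/r)^{\alpha(p-1)}$ since $(1-\alpha)(p-1)+1=p-\alpha(p-1)$, so that after dividing by $r^p$ (to convert to the rescaled tails $\tail/r^{p'}$) the coefficient of $[\tail(v-(v)_{B_r};x_0,r)/r^{p'}]^{p-1}$ becomes $c[1+r^{(1-s)p}(r/\rho)^{(1-\alpha)(p-1)+1}]$ --- yields the $(p-1)$-th power version of~\eqref{tail-dec-v}. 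Extracting $(p-1)$-th roots via $(a+b)^{1/(p-1)}\leq c(a^{1/(p-1)}+b^{1/(p-1)})$ completes the proof. I expect the main obstacle to be tracking the precise powers of $\rho$ and $r$ through the polar integration so that the universal exponent $(1-\alpha)(p-1)+1$ is produced regardless of the sign of $\alpha(p-1)-sp$; the case $\rho\geq r/2$ is trivial since then $r/\rho<2$ and every such factor is absorbed into the constant $c_3$.
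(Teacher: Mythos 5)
Your proof follows essentially the same route as the paper: split the tail integral into an annulus near $B_\rho$ plus the exterior of $B_r$, control the annulus by applying the quantitative H\"older oscillation estimate \eqref{mix.hol} inside a polar integration, control the exterior by triangle inequality, the sup bound (Lemma~\ref{lem:bdd}), Poincar\'e, and the half-radius tail reduction already used in Lemma~\ref{mix.loc.comp}, and treat the regime $\rho\gtrsim r$ by crude kernel bounds. The only material difference is in evaluating the polar integral: you keep the weight $t^{-sp}$ and run a sign analysis on $\alpha(p-1)-sp$, then weaken $\max\{0,sp-\alpha(p-1)\}$ up to $(1-\alpha)(p-1)+1$, whereas the paper first dominates $t^{-sp}\leq r^{(1-s)p}t^{-p}$ for $t\leq r$ so that the resulting exponent $\alpha(p-1)-p-1$ is uniformly below $-1$ and the factor $(r/\rho)^{(1-\alpha)(p-1)+1}$ appears directly without case distinctions or a borderline logarithm --- both variants give the stated bound.
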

\begin{proof} 
If $r/4 \leq \rho \leq r$, then 
\begin{align*}
    & \left[ \frac{1}{\rho^{p'}} \tail(v-(v)_{B_{\rho}}; \rho) \right]^{p-1} \\
    & \leq c \int_{\rn \setminus B_{r}} \frac{|v(x)-(v)_{B_{r}}|^{p-1}}{|x-x_0|^{n+sp}} \dx + c \int_{B_{r} \setminus B_{\rho}} \frac{|v(x)-(v)_{B_{r}}|^{p-1}}{|x-x_0|^{n+sp}} \dx + c \int_{\rn \setminus B_{\rho}} \frac{|(v)_{B_{r}}-(v)_{B_{\rho}}|^{p-1}}{|x-x_0|^{n+sp}} \dx \\
    & \leq c \left[ \frac{1}{r^{p'}} \tail(v-(v)_{B_{r}}; r) \right]^{p-1} + \frac{c r^{n}}{\rho^{n+sp}} \mean{B_{r}} |v-(v)_{B_{r}}|^{p-1} \dx + \frac{c}{\rho^{sp}}|(v)_{B_{\rho}}-(v)_{B_{r}}|^{p-1} \\
    & \leq c \left[ \frac{1}{r^{p'}} \tail(v-(v)_{B_{r}}; r) \right]^{p-1} + c r^{-sp} \left( \mean{B_{r}}|Dv|^{q_0} \dx \right)^{(p-1)/q_0}.
\end{align*}
Now for $0< \rho < r/4$, we split as follows:
\begin{align*}
    \left[ \frac{1}{\rho^{p'}} \tail(v-(v)_{B_{\rho}}; \rho) \right]^{p-1}
    & = \int_{\rn \setminus B_{r/4}} \frac{|v(x) - (v)_{B_{\rho}}|^{p-1}}{|x-x_0|^{n+sp}}\dx + \int_{B_{r/4} \setminus B_{\rho}} \frac{|v(x) - (v)_{B_{\rho}}|^{p-1}}{|x-x_0|^{n+sp}}\dx \eqqcolon (\mathbf{I}) + (\mathbf{II})\,.
\end{align*}
Lemma~\ref{lem:bdd} gives
\begin{align*}
    |(v)_{B_{\rho}} - (v)_{B_{r}}|
    & \leq c \sup_{B_{r/2}} |v - (v)_{B_{r}}| \leq c \mean{B_r}|v-(v)_{B_r}| \dx + c \tail(v-(v)_{B_{r}}; r)\,.
\end{align*}
Then
\begin{align*}
    (\mathbf{I})
    & \leq c \int_{\rn \setminus B_{r}} \frac{|v(x) - (v)_{B_{r}}|^{p-1}}{|x-x_0|^{n+sp}} \dx + c r^{(1-s)p-1} \left[ \mean{B_r}\frac{|v-(v)_{B_r}|}{r} \dx \right]^{p-1}  + c r^{(1-s)p-1} \left[ \frac{|(v)_{B_{\rho}} - (v)_{B_{r}}|}{r} \right]^{p-1} \\
    & \leq c \left[ \frac{1}{r^{p'}} \tail(v-(v)_{B_{r}};r) \right]^{p-1} + c r^{(1-s)p-1} \left[ \frac{1}{r} \mean{B_r}|v-(v)_{B_r}| \dx \right]^{p-1}\,.
\end{align*}
On the other hand, to estimate $(\mathbf{II})$ we use Lemma~\ref{lem:mix-hol} as follows:
\begin{align*}
    (\mathbf{II})
    & \leq c r^{(1-s) p} \int_{\rho}^{r/4} \frac{1}{t^p} \left( \osc_{B_{t}} v \right)^{p-1} \frac{{\rm d}t}{t} \\
    \overset{\eqref{mix.hol}}&{\leq} c r^{(1-s)p} \left[ \mean{B_r}|v-(v)_{B_r}| \dx + \tail(v-(v)_{B_{r}}; r) \right]^{p-1} \int_{\rho}^{r/4} \frac{1}{t^{p}}\left( \frac{t}{r}\right)^{\alpha(p-1)} \frac{{\rm d}t}{t} \\
    & \leq c r^{(1-s)p-1} \left( \frac{r}{\rho} \right)^{(1-\alpha)(p-1)+1} \left[ \frac{1}{r} \mean{B_r}|v-(v)_{B_r}|\dx + \frac{1}{r} \tail(v-(v)_{B_{r}};r) \right]^{p-1}\,.
\end{align*}
Summing up the above estimates, we conclude \eqref{tail-dec-v}.
\end{proof}

We further upgrade Lemma~\ref{lem:ex1} to the following decay estimate.
\begin{theo}\label{theo:homo-mixed-hol}
Let  $v$ be a weak solution to \eqref{eq:homo-mixed} under assumptions \eqref{growth}--\eqref{kernel.growth} with $p > 1$ and $s \in (0,1)$. 
Let $m \in (0,1-s)$, $\ve_{1} \in (0, (1-s)/p)$ and  $\bar a_1$, $\bar a_2$ be as in~\eqref{bar a def}. 
Then there exist constants $R_{0} \in (0,1)$, $\beta_{v} \in (0, \min\{ 1/(p-1), \beta_{w}\})$ and $c \ge 1$, all depending only on $\data,m,\ve_{1}$, such that for any concentric balls $B_{\rho} \subset B_{r} \Subset \Omega$ with $r \le R_{0}$ it holds
\begin{align*}
\lefteqn{ \mean{B_{\rho}} |Dv - (Dv)_{B_{\rho}}| \dx} \\
& \leq c \left( \frac{\rho}{r} \right)^{\beta_{v}} \left[ \mean{B_{r}} |Dv - (Dv)_{B_{r}}| \dx + r^{\bar a_1 - \ve_{1}} \left( \mean{B_{r}}|Dv|^{q_0} \dx \right)^{1/q_0}   + r^{\bar a_2 - \ve_{1}} \left[ \frac{1}{r^{p'}} \tail(v-(v)_{B_{r}}; r) \right] \right].
\end{align*}
Furthermore, for $p\geq 2$ the constants $R_0, \beta_{v}, c$ can be taken independent of $m$.
\end{theo}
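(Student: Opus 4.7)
The plan is to establish a Campanato-type decay for a single weighted combined quantity at a fixed dyadic ratio $\tau\in(0,1)$, and then interpolate between scales. With $q_0=\max\{p-1,1\}$, for any ball $B_\rho=B_\rho(x_0)\Subset\Omega$ introduce
\[
E(\rho)\coloneqq\Bigl(\mean{B_\rho}|Dv-(Dv)_{B_\rho}|^{q_0}\dx\Bigr)^{1/q_0},\quad M(\rho)\coloneqq\Bigl(\mean{B_\rho}|Dv|^{q_0}\dx\Bigr)^{1/q_0},\quad T(\rho)\coloneqq\frac{\tail(v-(v)_{B_\rho};\rho)}{\rho^{p'}},
\]
and the combined quantity $\Psi(\rho)\coloneqq E(\rho)+\rho^{\bar a_1-\varepsilon_1}M(\rho)+\rho^{\bar a_2-\varepsilon_1}T(\rho)$. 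The target is to show $\Psi(\tau r)\le\tau^{\beta_v}\Psi(r)$ whenever $r\le R_0$, with universal $\tau,R_0,\beta_v$; since $q_0\ge 1$ Jensen gives $\mean{B_\rho}|Dv-(Dv)_{B_\rho}|\dx\le\Psi(\rho)$, and a standard dyadic interpolation will yield the stated estimate.

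To close the iteration I shall use three coupling estimates at ratio $\tau$. First, Lemma~\ref{lem:ex1} with $q=q_0$ and $\rho=\tau r$ gives $E(\tau r)\le c_1\tau^{\beta_w}E(r)+c_2\tau^{-n/q_0}r^{\varepsilon_1}[r^{\bar a_1-\varepsilon_1}M(r)+r^{\bar a_2-\varepsilon_1}T(r)]$. Second, Lemma~\ref{lem:tail-dec-v} with $\rho=\tau r$, once $R_0$ is chosen so small that $r^{(1-s)p}\tau^{-((1-\alpha)(p-1)+1)}\le 1$, yields $T(\tau r)\le c\,T(r)+c\,\tau^{-\gamma_1}r^{\gamma_2}M(r)$ with $\gamma_1=[(1-\alpha)(p-1)+1]/(p-1)$ and $\gamma_2=[(1-s)p-1]/(p-1)$. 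Third, the trivial splitting $|Dv|\le|Dv-(Dv)_{B_r}|+|(Dv)_{B_r}|$ and volume comparison give $M(\tau r)\le c\,\tau^{-n/q_0}E(r)+cM(r)$. Multiplying the second and third estimates by $(\tau r)^{\bar a_2-\varepsilon_1}$ and $(\tau r)^{\bar a_1-\varepsilon_1}$ respectively, each summand on the right becomes a power of $\tau$ times a power of $r$ times one of the three pieces of $\Psi(r)$. The crucial algebraic input is that $\bar a_2-\bar a_1+\gamma_2>0$: a direct computation from \eqref{bar a def} gives $\bar a_2-\bar a_1+\gamma_2=1-s$ for $p\ge 2$ and $((1-s)p-m)/(p-1)$ for $1<p<2$, both strictly positive thanks to $m<1-s$. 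This ensures that the most dangerous term, $\tau^{-\gamma_1}M(r)$ from the tail estimate, after the $(\tau r)^{\bar a_2-\varepsilon_1}$ weighting, is multiplied by the clean positive power $r^{\bar a_2-\bar a_1+\gamma_2}$.

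I then fix $\beta_v\in(0,\min\{\beta_w,\bar a_1-\varepsilon_1,\bar a_2-\varepsilon_1,1/(p-1)\})$, pick $\tau=\tau(\data,m,\varepsilon_1)\in(0,1)$ small enough that each of the ``good'' factors $c_1\tau^{\beta_w-\beta_v}$, $c\tau^{\bar a_1-\varepsilon_1-\beta_v}$, $c\tau^{\bar a_2-\varepsilon_1-\beta_v}$ is $\le 1/6$, and finally fix $R_0=R_0(\data,m,\varepsilon_1)$ so small that all residual gains $r^{\varepsilon_1}$ and $r^{\bar a_2-\bar a_1+\gamma_2}$ defeat the inherited negative powers of $\tau$. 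Summing yields $\Psi(\tau r)\le\tau^{\beta_v}\Psi(r)$; iteration produces $\Psi(\tau^k r)\le\tau^{k\beta_v}\Psi(r)$ for all $k\ge 1$, and for $\rho\in(\tau^{k+1}r,\tau^k r]$ the comparisons $E(\rho),M(\rho)\lesssim E(\tau^k r),M(\tau^k r)$ (volume) together with Lemma~\ref{lem:tail-dec-v} applied on the scale pair $(\tau^k r,\rho)$ (to handle $T$) give $\Psi(\rho)\le c(\rho/r)^{\beta_v}\Psi(r)$. When $p\ge 2$, neither $\bar a_1,\bar a_2$ nor the threshold conditions on $\tau,R_0$ depend on $m$, so the constants can be taken $m$-independent in that range.

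The main obstacle is Step~2: the third bound above is not a decay estimate but only a boundedness statement, and the feedback from $T$ to $M$ in Lemma~\ref{lem:tail-dec-v} carries the severe amplification $\tau^{-\gamma_1}$, which is considerably worse than the $\tau^{-n/q_0}$ loss in the excess estimate. Closing the system forces the asymmetric weighting $\rho^{\bar a_1-\varepsilon_1}$ on $M$ and $\rho^{\bar a_2-\varepsilon_1}$ on $T$, and the identity $\bar a_2-\bar a_1+\gamma_2>0$ (which encodes the constraint $m<1-s$) is precisely what permits all bad $\tau$-powers to be absorbed once $r\le R_0$; this is the structural reason behind both the $\varepsilon_1$-loss in the theorem and the sub-optimal tail decay noted in the paper's introduction.
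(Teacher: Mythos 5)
Your one-step $\Psi$-contraction is a genuinely different scheme from the paper's, and the algebraic bookkeeping in it (the role of $\bar a_2-\bar a_1+\gamma_2>0$, the reason for $m<1-s$, the $m$-independence for $p\ge 2$, the observation that the tail coupling is a boundedness rather than a decay estimate) is all correct. The paper instead runs a two-stage induction: it first proves the \emph{uniform} bounds \eqref{hom.ex3}--\eqref{hom.ex5} on the $L^{q_0}$-quantities via $\lambda_v$, and only then iterates Lemma~\ref{lem:ex1} with $q=1$ to obtain decay of the $L^1$-excess.

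There is, however, a genuine gap in your scheme for $p>2$. Your combined quantity $\Psi(\rho)=E(\rho)+\rho^{\bar a_1-\ve_1}M(\rho)+\rho^{\bar a_2-\ve_1}T(\rho)$ contains the $L^{q_0}$-excess $E(\rho)=(\mean{B_\rho}|Dv-(Dv)_{B_\rho}|^{q_0}\dx)^{1/q_0}$ rather than the $L^1$-excess $\mean{B_\rho}|Dv-(Dv)_{B_\rho}|\dx$ that appears in the statement. This choice is forced: to close the $M$-coupling you need $M(\tau r)\le\tau^{-n/q_0}E(r)+M(r)$, and that estimate genuinely requires the $L^{q_0}$-average of $|Dv-(Dv)_{B_r}|$, not the $L^1$-average, since $M$ is built from $(\mean|Dv|^{q_0})^{1/q_0}$. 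As a consequence, the contraction $\Psi(\tau^k r)\le\tau^{k\beta_v}\Psi(r)$, after Jensen on the left, yields
\[
\mean{B_\rho}|Dv-(Dv)_{B_\rho}|\dx \;\le\; c\Big(\tfrac{\rho}{r}\Big)^{\beta_v}\Big[\,E(r)+r^{\bar a_1-\ve_1}M(r)+r^{\bar a_2-\ve_1}T(r)\,\Big],
\]
with $E(r)$ on the right instead of the $L^1$-excess. For $p>2$ one has $q_0=p-1>1$, and then $E(r)$ can be much larger than $\mean{B_r}|Dv-(Dv)_{B_r}|\dx$; the best generic bound $E(r)\le 2M(r)$ destroys the crucial positive power $r^{\bar a_1-\ve_1}$ in front of the $M$-term. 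This weakened statement would not suffice downstream: in the proof of Proposition~\ref{prop:homo-mixed-hol2}, the term $(\mathbf{I})=(\inf_{B_{r/2}}|Dv|)^{p-2}\mean{B_\rho}|Dv-(Dv)_{B_\rho}|\dx$ is absorbed into $\mean{B_r}|A(Dv)-(A(Dv))_{B_r}|\dx$ precisely because the bound coming from the theorem is an $L^1$-excess at scale $r/2$; with $E(r/2)$ in its place, $(\inf|Dv|)^{p-2}E(r/2)$ has no comparable estimate by the $A$-excess. So your argument as written proves a different, strictly weaker theorem when $p>2$. The repair is exactly the two-step structure of the paper: use your contraction (or the paper's induction) to get the uniform control $r_k^{\bar a_1}M(r_k)+r_k^{\bar a_2}T(r_k)\lesssim r_k^{4\ve}\lambda_v$, and \emph{separately} iterate Lemma~\ref{lem:ex1} with $q=1$ for the $L^1$-excess. (For $2-1/n<p\le 2$, $q_0=1$ and your scheme already gives the stated result.)
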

\begin{proof} 
Recall that $\beta_w$ is the constant introduced in Lemma~\ref{localeq.ed}.
Without loss of generality, we may assume $\beta_{w} < 1/(p-1)$.
We take
\begin{equation}\label{ve-choice}
    \ve = \min \left\{ \frac{\bar{a}_{1}}{8}, \frac{\ve_{1}}{4}, 
    \frac{\beta_{w}}{4} \right\}\,.
\end{equation}
{Note that when $p\geq 2$, then $\bar a_1$ and (consequently) $\ve$ do not depend on $m$.} Moreover, the constants $c_{1},c_{2}$ are from Lemma~\ref{lem:ex1} and $c_{3}$ is from Lemma~\ref{lem:tail-dec-v}, where all the constants depend only on $\data$.
We take $\tau = \tau(\data, m, \ve_{1}) \in (0, 1/8)$ small enough to satisfy
\begin{equation}\label{tau-choice}
   2c_{1}\tau^{\beta_{w}} \le 1 \,, \quad 2\tau^{\ve}\leq 1\,, \quad 2^{1/(p-1)} c_{3} \tau^{4\varepsilon} \leq 1/2 \, ,
\end{equation}
and $R_{0}  \in (0,1)${, depending on $(\data, m, \ve_{1})$ when $1<p<2$ and on $(\data, \ve_{1})$ only when $p\geq 2$,} to satisfy 
\begin{equation}\label{Rzero}
    c_{2} R_{0}^{\ve} \leq \tau^{10n}\,, \quad c_{3}\tau^{-p'}R_{0}^{1-s} \leq 1/2 \,.
\end{equation}

We fix a ball $B_{r} = B_{r}(x_{0}) \Subset \Omega$ with $r \le R_{0}$ as in the statement, and set
\[ r_k \coloneqq \tau^{k} r\,, \quad \bk{k} \coloneqq B_{r_k}(x_0) \quad \text{for every }\ k = 0, 1, 2, \dots\,. \]
Lemma~\ref{lem:ex1} with $q= q_0$ and~\eqref{tau-choice} give
\begin{align}\label{hom.ex1}
\left( \mean{\bk{k+1}} |Dv - (Dv)_{\bk{k+1}}|^{q_0} \dx \right)^{1/q_0}
& \leq \frac{1}{2} \left( \mean{\bk{k}} |Dv - (Dv)_{\bk{k}}|^{q_0} \dx \right)^{1/q_0} + \tau^{9n} r_k^{\bar a_1 - \ve} \left(\mean{\bk{k}} |Dv|^{q_0} \dx \right)^{1/q_0} \notag \\
& \quad + \tau^{9n} r_k^{\bar a_2 - \ve} \left[ \frac{1}{r_k^{p'}} \tail(v-(v)_{\bk{k}}; r_{k}) \right]\,.
\end{align}
We now set
\begin{equation}\label{hom.ex2}
\lambda_{v} \coloneqq \tau^{-8n} r_{0}^{\bar a_1 - 4 \ve} \left( \mean{\bk{0}} |Dv|^{q_0} \dx \right)^{1/q_0} + \tau^{-8n} r_{0}^{\bar a_2 - 4 \ve} \left[\frac{1}{r_{0}^{p'}} \tail(v-(v)_{\bk{0}};  r_{0})\right]\,.
\end{equation}
To proceed further, we employ an induction argument to demonstrate that for any $k \in \N$
\begin{align}
r_{k}^{\bar{a}_{1}-4\varepsilon} \sum_{i=0}^{k} \left( \mean{\bk{i}} |Dv - (Dv)_{\bk{i}}|^{q_0} \dx \right)^{1/q_0} \leq \tau^{4n} \lambda_{v} \,, \label{hom.ex3} \\
r_{k}^{\bar a_2 - 4 \ve} \left[\frac{1}{r_{k}^{p'}} \tail(v-(v)_{\bk{k}}; r_{k})\right] \leq \lambda_{v}\,, \label{hom.ex4}
\end{align}
and
\begin{equation}\label{hom.ex5}
r_{k}^{\bar a_1 - 4 \ve} \left( \mean{\bk{k}} |Dv|^{q_0} \dx \right)^{1/q_0} \leq \lambda_{v}\,.
\end{equation}
By the choice of $\lambda_{v}$ in \eqref{hom.ex2}, one can directly check that
\begin{equation}\label{hom.ex6}
r_{0}^{\bar a_1 - 4 \ve}\left( \mean{\bk{0}} |Dv|^{q_0} \dx \right)^{1/q_0} + r_{0}^{\bar a_2 - 4 \ve} \left[\frac{1}{r_{0}^{p'}} \tail(v-(v)_{\bk{0}}; r_{0})\right] \leq \tau^{8n} \lambda_{v}\,.
\end{equation}
We now assume that there exists $k \in \N$ such that \eqref{hom.ex3}, \eqref{hom.ex4}, and \eqref{hom.ex5} hold for any $i \in \{0, 1, \dots, k\}$.
Then \eqref{ve-choice}, \eqref{tau-choice}, \eqref{hom.ex1}, and \eqref{hom.ex3}--\eqref{hom.ex6} imply 
\begin{align}\label{hom.ex7}
\lefteqn{ r_{k+1}^{\bar{a}_{1}-4\varepsilon}\sum_{i=0}^{k+1} \left( \mean{\bk{i}} |Dv-(Dv)_{\bk{i}}|^{q_0} \dx \right)^{1/q_0} } \notag \\
&\leq r_{k+1}^{\bar{a}_{1}-4\varepsilon}\left( \mean{\bk{0}} |Dv-(Dv)_{\bk{0}}|^{q_0} \dx \right)^{1/q_0} + \tau^{\bar{a}_{1}-4\ve}r_{k}^{\bar{a}_{1}-4\varepsilon} \sum_{i=0}^{k} \left( \mean{\bk{i}} |Dv-(Dv)_{\bk{i}}|^{q_0} \dx \right)^{1/q_0} \notag\\
&\quad + \tau^{9n} \sum_{i=0}^{k} r_i^{3 \ve} \left( \mean{\bk{i}} |Dv|^{q_0} \dx \right)^{1/q_0} \notag + \tau^{9n} \sum_{i=0}^{k} r_{i}^{\bar a_2 - \ve} \left[ \frac{1}{r_{i}^{p'}} \tail(v-(v)_{\bk{i}};  r_{i}) \right] \notag\\
&\leq 2\tau^{8n}\lambda_{v} + \tau^{4\varepsilon+4n} \lambda_{v} + 2\tau^{9n}\frac{r_{0}^{3\varepsilon}}{1- \tau^{3\ve }} \lambda_{v} \leq \tau^{4n} \lambda_{v}\,.
\end{align}
By Minkowski's inequality, we get
\begin{align*}
r_{k+1}^{\bar a_1 - 4 \ve}\left( \mean{\bk{k+1}} |Dv|^{q_0} \dx \right)^{1/q_0}
& \leq r_{k+1}^{\bar a_1 - 4 \ve} \sum_{i=0}^{k} \left( \mean{\bk{i+1}}|Dv-(Dv)_{\bk{i}}|^{q_0} \dx \right)^{1/q_0} + r_{k+1}^{\bar a_1 - 4 \ve} |(Dv)_{\bk{0}}| \notag \\
&  \leq \tau^{-n/q_0} r_{k}^{\bar{a}_{1}-4\varepsilon}\sum_{i=0}^{k} \left( \mean{\bk{i}}|Dv-(Dv)_{\bk{i}}|^{q_0} \dx\right)^{1/q_0} + r_{0}^{\bar a_1 - 4 \ve} |(Dv)_{\bk{0}}| \\
\overset{\eqref{hom.ex6},\eqref{hom.ex7}}&{\leq} \tau^{3n} \lambda_{v} + \tau^{8n} \lambda_{v} \notag 
 \leq \lambda_{v}\,.
\end{align*}
Note that if $p \geq 2$, then $\bar a_2 - \bar a_1 = s ( p-1)$.
On the other hand, if $p \in (1,2)$, then $\bar a_2 - \bar a_1 = (1-m)/(p-1) \in (s/(p-1), 1/(p-1))$.
Hence, for any $p >1$, it holds
\[ (\bar a_2- \bar a_1)(p-1) + (1-s)p - 1 \geq (1-s)(p-1) >0\,. \]
Using this fact and Lemma~\ref{lem:tail-dec-v}, we have
\begin{align*}
\lefteqn{ r_{k+1}^{\bar{a}_{2}-4\varepsilon}\left[\frac{1}{r_{k+1}^{p'}}\tail(v-(v)_{B^{k+1}};r_{k+1})\right] }\\
& \le c_{3}\tau^{\bar{a}_{2}-4\varepsilon}\left[1+r_{k}^{(1-s)p}\tau^{-(1-\alpha)(p-1)-1}\right]^{1/(p-1)}r_{k}^{\bar{a}_{2}-4\varepsilon}\left[\frac{1}{r_{k}^{p'}}\tail(v-(v)_{B^{k}};r_{k})\right] \\
& \quad + c_{3}\tau^{\bar{a}_{2}-4\varepsilon}r_{k}^{\bar{a}_{2}-\bar{a}_{1}+[(1-s)p-1]/(p-1)}\tau^{-[(1-\alpha)(p-1)+1]/(p-1)}r_{k}^{\bar{a}_{1}-4\varepsilon}\left(\mean{B^{k}}|Dv|^{q_{0}}\dx\right)^{1/q_{0}} \\
& \le 2^{1/(p-1)}c_{3}\tau^{4\varepsilon}\lambda_{v} + c_{3}\tau^{-p'}r_{k}^{1-s}\lambda_{v}  \le \lambda_{v}\,.
\end{align*}
In summary, by the induction argument \eqref{hom.ex3}, \eqref{hom.ex4}, and \eqref{hom.ex5} hold for every $k \in \N \cup \{0\}$.

This time Lemma~\ref{lem:ex1} with $q= 1$ and~\eqref{tau-choice} give
\begin{align*}
\mean{B^{i+1}}|Dv-(Dv)_{B^{i+1}}|\dx 
& \le \frac{1}{2}\mean{B^{i}}|Dv-(Dv)_{B^{i}}|\dx + cr_{i}^{\bar{a}_{1}}\left(\mean{B^{i}}|Dv|^{q_{0}}\dx\right)^{1/q_{0}} \\
& \quad + cr_{i}^{\bar{a}_{2}}\left[\frac{1}{r_{i}^{p'}}\tail(v-(v)_{B^{i}};r_{i})\right]
\end{align*}
for a constant $c$ ultimately depending on $(\data,m,\ve_{1})$ when $1<p<2$ and on $(\data,\ve_{1})$ only when $p \ge 2$. Iterating this inequality and then using \eqref{hom.ex4} and \eqref{hom.ex5}, we have
\begin{align*}
\mean{B^{k}}|Dv-(Dv)_{B^{k}}|\dx 
& \le \frac{1}{2^{k}}\mean{B^{0}}|Dv-(Dv)_{B^{0}}|\dx + c\sum_{i=1}^{k}\frac{1}{2^{k-i}}r_{i}^{\bar{a}_{1}}\left(\mean{B^{i}}|Dv|^{q_{0}}\dx\right)^{1/q_{0}} \\
& \quad + c\sum_{i=1}^{k}\frac{1}{2^{k-i}}r_{i}^{\bar{a}_{2}}\left[\frac{1}{r_{i}^{p'}}\tail(v-(v)_{B^{i}};r_{i})\right] \\
& \le \frac{c}{2^{k}}\left(\mean{B^{0}}|Dv-(Dv)_{B^{0}}|\dx + 2\lambda_{v}\sum_{i=1}^{k}2^{i}r_{i}^{4\varepsilon}\right) \\
& \le \frac{c}{2^{k}}\left(\mean{B^{0}}|Dv-(Dv)_{B^{0}}|\dx + \lambda_{v}\right)\,.
\end{align*}
Hence, there is $\beta_{v}$, depending on $(\data,m,\ve_{1})$ when $1<p<2$ and on $(\data,\ve_{1})$ only when $p\ge2$, such that
\[ \mean{\bk{k}}|Dv-(Dv)_{\bk{k}} \dx \leq c\left( \frac{r_{k}}{r_0}\right)^{\beta_{v}} \left( \mean{\bk{0}} |Dv - (Dv)_{\bk{0}}| \dx + \lambda_v \right) \, . \]
Now, for any $0 < \rho \leq r$, there exists $k_0 \in \N \cup \{0\}$ such that $r_{k_{0}+1} < \rho \leq r_{k_{0}}$. Then, by enlarging the radius of the ball, one has
\begin{align*}
\mean{B_{\rho}(x_0)} |Dv - (Dv)_{B_{\rho}(x_0)}| \dx
& \leq \mean{B_{\rho}(x_0)} |Dv - (Dv)_{\bk{k_0}}| \dx + |(Dv)_{B_{\rho}(x_0)} - (Dv)_{\bk{k_0}}| \\
& \leq c \mean{\bk{k_0}} |Dv - (Dv)_{\bk{k_0}}| \dx  \\
& \leq c \left( \frac{\rho}{r} \right)^{\beta_{v}} \left( \mean{B_{r}(x_0)} |Dv - (Dv)_{B_{r}}| \dx + \lambda_{v} \right)\,.
\end{align*}
Recalling the definition of $\lambda_{v}$ in \eqref{hom.ex2}, an elementary manipulation gives the desired estimate.
\end{proof}
As a direct consequence of Theorem~\ref{theo:homo-mixed-hol}, we have the following local Lipschitz bound for $v$.
\begin{coro}Under the assumptions of Theorem~\ref{theo:homo-mixed-hol},   we have that
\begin{equation}\label{homo-mixed-lip}
    \sup_{B_{r/2}(x_0)} |Dv| \leq c \left( \mean{B_{r}(x_0)} |Dv|^{q_0} \dx \right)^{1/q_0} + cr^{\bar{a}_{2} - \ve_{1}} \left[ \frac{1}{r^{p'}} \tail(v-(v)_{B_{r}(x_0)} ; x_0, r) \right]\,.
\end{equation}\end{coro}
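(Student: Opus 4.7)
The plan is to derive the $L^\infty$ bound as a standard Campanato-type consequence of Theorem~\ref{theo:homo-mixed-hol}, combined with one tail comparison step. Fix $x \in B_{r/2}(x_0)$. Since $B_{r/2}(x) \subset B_r(x_0) \Subset \Omega$ and $r/2 \le R_0$, the theorem applies to all concentric balls $B_\rho(x) \subset B_{r/2}(x)$ with outer radius $r/2$, giving
\[
\mean{B_\rho(x)} |Dv - (Dv)_{B_\rho(x)}| \dx \le c\left(\frac{2\rho}{r}\right)^{\beta_v} \mathcal{M}(x), \qquad 0 < \rho \le r/2,
\]
where $\mathcal{M}(x)$ abbreviates the bracketed right-hand side centred at $x$ with outer radius $r/2$.

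My first step is to reduce $\mathcal{M}(x)$ to quantities centred at $x_0$ with radius $r$. The excess $\mean{B_{r/2}(x)}|Dv-(Dv)_{B_{r/2}(x)}|\dx$ and the $L^{q_0}$ average are both dominated by $c(\mean{B_r(x_0)}|Dv|^{q_0}\dx)^{1/q_0}$ via the inclusion $B_{r/2}(x) \subset B_r(x_0)$. For the tail I split $\rn \setminus B_{r/2}(x) = (\rn \setminus B_r(x_0)) \cup (B_r(x_0) \setminus B_{r/2}(x))$. On the outer piece $|y-x_0| \le 2|y-x|$, so after adding and subtracting $(v)_{B_r(x_0)}$, the main contribution is $\tail(v-(v)_{B_r(x_0)};x_0,r)^{p-1}$ while the cross-term $|(v)_{B_{r/2}(x)}-(v)_{B_r(x_0)}|^{p-1}$ is controlled by Poincar\'e. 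On the annular piece $|y-x|\ge r/2$ yields a factor $r^{-sp}$ in front of an $L^{p-1}$-average of $v-(v)_{B_{r/2}(x)}$ over $B_r(x_0)$, also handled by Poincar\'e. A short computation absorbs every residual term into the two quantities on the right-hand side of~\eqref{homo-mixed-lip}, and the factor $r^{\bar a_1-\ve_1}$ multiplying the $L^{q_0}$ average is absorbed into the constant $c$ via $r \le R_0 \le 1$.

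Denote by $\mathcal{N}$ the right-hand side of~\eqref{homo-mixed-lip}, so $\mathcal{M}(x) \le c\mathcal{N}$. Applying the decay estimate along the dyadic scale $\rho_k = 2^{-k}(r/2)$ and using $|(Dv)_{B_{\rho_{k+1}}(x)}-(Dv)_{B_{\rho_k}(x)}| \le 2^n \mean{B_{\rho_k}(x)}|Dv-(Dv)_{B_{\rho_k}(x)}|\dx$, the series telescopes with geometric decay $2^{-k\beta_v}$, so the averages are Cauchy and
\[
|(Dv)_{B_\rho(x)} - (Dv)_{B_{r/2}(x)}| \le c\mathcal{N} \qquad \text{for every } 0 < \rho \le r/2.
\]
The Campanato characterisation provides a continuous representative of $Dv$ on $B_{r/2}(x_0)$ with $\lim_{\rho \to 0}(Dv)_{B_\rho(x)} = Dv(x)$, hence $|Dv(x)| \le |(Dv)_{B_{r/2}(x)}| + c\mathcal{N} \le c\mathcal{N}$, the last bound again using $B_{r/2}(x) \subset B_r(x_0)$. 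Taking the supremum over $x \in B_{r/2}(x_0)$ yields~\eqref{homo-mixed-lip}. The main technical obstacle is the tail comparison in the first step; the remaining Campanato-to-$L^\infty$ passage is routine.
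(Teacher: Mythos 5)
Your argument is correct and is precisely the standard Campanato-to-$L^\infty$ passage that the paper leaves implicit when it calls the corollary a ``direct consequence'' of Theorem~\ref{theo:homo-mixed-hol}: re-center the excess-decay estimate at an arbitrary $x\in B_{r/2}(x_0)$ (after which the telescoping of dyadic averages and the boundedness of $(Dv)_{B_{r/2}(x)}$ give the pointwise bound), and reduce the $x$-centred quantities to $x_0$-centred ones. The tail re-centering is the only non-routine step; your split into $\rn\setminus B_r(x_0)$ and $B_r(x_0)\setminus B_{r/2}(x)$, the triangle inequality in the means, and Poincar\'e do yield powers $r^{\bar a_2-\ve_1+1-sp/(p-1)}$ which one checks are nonnegative for all $p>1$, $s\in(0,1)$, $m\in(0,1-s)$ and $\ve_1\in(0,(1-s)/p)$, so the extra terms are indeed absorbed into the $L^{q_0}$-average.
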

Let us enhance the comparison estimate in the superquadratic case, i.e., for $p \geq 2$.
It is worth mentioning that in the next proposition, {due to Theorem~\ref{theo:homo-mixed-hol},} none of the constants depend on $m$ since we consider the superquadratic case only. 
\begin{prop}\label{prop:homo-mixed-hol2}
Let $v$ be a weak solution to \eqref{eq:homo-mixed} under assumptions \eqref{growth}--\eqref{kernel.growth} with $p\geq 2$ and $s \in (0,1)$, and let $\ve_{1} \in (0, (1-s)/p)$ be arbitrary. Assume $R_{0}=R_{0}(\data, \ve_{1})\in(0, 1)$ and
$\beta_{v} = \beta_{v}(\data, \ve_{1}) \in (0, \min\{ 1/(p-1), \beta_{w}\})$ are given by Theorem~\ref{theo:homo-mixed-hol}. 
Then there exist $c \ge 1$ and $\wt R_{0} \in (0,R_{0})$, both depending only on $\data$ and $\ve_{1}$, such that for any concentric balls $B_{\rho} \subset B_{r} \Subset \Omega$ with $r<\wt R_0$ we have
\begin{align*}
\lefteqn{ \mean{B_{\rho}} |A(Dv) - (A(Dv))_{B_{\rho}}| \dx }\\
& \leq c  \left( \frac{\rho}{r} \right)^{\beta_{v}} \Bigg[ \mean{B_{r}} |A(Dv) - (A(Dv))_{B_{r}}| \dx + r^{\ve_{1}} \mean{B_{r}}|Dv|^{p-1} \dx + r^{1 - (p-1)\ve_{1}} \left[ \frac{1}{r^{p'}} \tail(v-(v)_{B_{r}}; r) \right]^{p-1} \Bigg]\,.
\end{align*}
\end{prop}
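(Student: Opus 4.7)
The strategy is to adapt the dyadic iteration of Theorem~\ref{theo:homo-mixed-hol} so as to control the excess of $A(Dv)$ rather than that of $Dv$, exploiting that $p\geq 2$ makes the $A(\cdot)$-map locally biLipschitz via Lemma~\ref{lem:mono}. The crucial new input is the excess decay for $A(Dw)$ valid for weak solutions $w$ of the homogeneous local $p$-Laplace equation, available from \cite[Theorems~4.3, 4.4]{BSY}. For $\rho\geq r/8$ the claim is immediate by ball enlargement and $(\rho/r)^{\beta_v}\geq 8^{-\beta_v}$. For $\rho< r/8$, letting $w\in v + W^{1,p}_0(B_{r/4})$ be the weak solution of \eqref{eq:homo-local-sec4}, the local $A(Dw)$-excess decay combined with triangle inequality and ball enlargement produces the one-step estimate
\begin{equation*}
\mean{B_\rho}|A(Dv) - (A(Dv))_{B_\rho}|\dx \leq c(\rho/r)^{\beta_w}\mean{B_r}|A(Dv) - (A(Dv))_{B_r}|\dx + c(r/\rho)^n \mean{B_{r/4}}|A(Dv) - A(Dw)|\dx.
\end{equation*}

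The core step is to bound the comparison error by $cr^{\varepsilon_1}\mean{B_r}|Dv|^{p-1}\dx + cr^{1-(p-1)\varepsilon_1}[\tail(v-(v)_{B_r};r)/r^{p'}]^{p-1}$. The plan is to apply the pointwise inequality $|A(z_1) - A(z_2)|\leq c(|z_1|+|z_2|)^{p-2}|z_1 - z_2|$ from Lemma~\ref{lem:mono}, followed by H\"older's inequality, the local Lipschitz bound \eqref{homo-mixed-lip} for $Dv$ with an auxiliary parameter $\varepsilon_1''\in(0,(1-s)/p)$, interior $L^p$ bounds for $Dw$ via quasi-minimality \eqref{p.min} together with standard local Lipschitz estimates for homogeneous $p$-Laplace solutions, and the $L^p$-comparison $(\mean{B_{r/4}}|Dv - Dw|^p\dx)^{1/p}$ from Lemma~\ref{mix.loc.comp}. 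This produces four cross products of powers of $\mean{B_r}|Dv|^{p-1}\dx$ and $\tail/r^{p'}$ weighted by powers of $r$. Each cross term is distributed via a \emph{weighted} Young's inequality $ab\leq t^P a^P/P + t^{-P'}b^{P'}/P'$ with weight $t=r^\gamma$ (with $\gamma\in\mathbb{R}$ possibly negative), where an elementary analysis shows that the admissible range of $\gamma$ is simultaneously non-empty for all four cross terms, provided $\varepsilon_1''$ is picked from a suitable non-empty range depending on $\varepsilon_1$, $s$, $p$ and $\data$.

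Substituting this bound and choosing $\rho = \tau r$ with $\tau\in(0,1/8)$ small yields a Campanato-type one-step recursion of the form $a_{k+1}\leq c\tau^{\beta_w} a_k + c\tau^{-n}e_k$, with $a_k \coloneqq \mean{B_{r_k}}|A(Dv) - (A(Dv))_{B_{r_k}}|\dx$. I would then iterate along the dyadic sequence $r_k = \tau^k r$ and control the iterated errors $e_k$ using the uniform density bounds \eqref{hom.ex4},~\eqref{hom.ex5} from the proof of Theorem~\ref{theo:homo-mixed-hol}, which bound $\mean{B_{r_k}}|Dv|^{p-1}\dx$ and $\tail(v-(v)_{B_{r_k}};r_k)/r_k^{p'}$ by $r_k$-weighted multiples of the scale $\lambda_v$ defined in \eqref{hom.ex2}. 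Picking $\tau$ with $c\tau^{\beta_w}\leq \tau^{\beta_v}$ and summing the resulting geometric series delivers the claimed decay with the same exponent $\beta_v$ as in Theorem~\ref{theo:homo-mixed-hol}. The main technical obstacle is the bookkeeping in the comparison error step: the four pairs of linear constraints on $\gamma$, $\varepsilon_1$, $\varepsilon_1''$, together with the convergence requirement that forces $\varepsilon_1''$ to be bounded below by a multiple of $\varepsilon_1$, must be shown to admit a common solution; a direct computation verifies this under the standing constraint $\varepsilon_1<(1-s)/p$.
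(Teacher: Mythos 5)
Your plan takes a genuinely different route from the paper. The paper does not re-run the Campanato iteration at the $A(\cdot)$-level; it applies Theorem~\ref{theo:homo-mixed-hol} \emph{once} (with $\varepsilon_1$ replaced by $\varepsilon_1/(p-1)$) and then converts the resulting $Dv$-excess decay into $A(Dv)$-excess decay via a purely algebraic trick: writing $(\sup_{B_{r/2}}|Dv|)^{p-2}\leq c(\inf_{B_{r/2}}|Dv|)^{p-2}+c(\osc_{B_{r/2}}Dv)^{p-2}$, the $\inf^{p-2}$ piece is absorbed inside the integrand so that $|Dv|^{p-2}|Dv-\xi|\lesssim|A(Dv)-A(\xi)|$ with $\xi=A^{-1}\big((A(Dv))_{B_r}\big)$, while the $\osc^{p-2}$ piece is controlled by a second application of the same excess decay. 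This is a one-shot derivation; no new comparison estimate and no fresh induction is needed.

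The gap in your proposal is in the iteration step. You propose $a_{k+1}\leq c\tau^{\beta_w}a_k+c\tau^{-n}e_k$ with $e_k\lesssim r_k^{\ve_{1}}\mean_{B_{r_k}}|Dv|^{p-1}\dx+r_k^{1-(p-1)\ve_{1}}\big[\tail_k/r_k^{p'}\big]^{p-1}$, and you want to control $e_k$ using the density bounds \eqref{hom.ex4}, \eqref{hom.ex5}. But these are \emph{upper bounds that deteriorate as $k\to\infty$}: \eqref{hom.ex5} gives, for $p\geq 2$,
\[
\mean_{B_{r_k}}|Dv|^{p-1}\dx\;\leq\;\lambda_v^{p-1}\,r_k^{-(\bar a_1-4\ve)(p-1)}\;=\;\lambda_v^{p-1}\,r_k^{-(1-s)+4\ve(p-1)}\,,
\]
whence $r_k^{\ve_1}\mean_{B_{r_k}}|Dv|^{p-1}\dx\leq\lambda_v^{p-1}r_k^{\ve_1-(1-s)+4\ve(p-1)}$. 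The constraint $\ve\leq\min\{\bar a_1/8,\ve_1/4,\beta_w/4\}$ forces $4\ve(p-1)\leq\min\{(1-s)/2,\ \ve_1(p-1)\}$, and since $\ve_1<(1-s)/p\leq(1-s)/2$, the exponent $\ve_1-(1-s)+4\ve(p-1)$ is strictly negative for \emph{every} admissible choice of the auxiliary parameter. Thus the density bounds force $e_k$ to \emph{grow} geometrically, and the recursion $a_{k+1}\leq c\tau^{\beta_w}a_k+c\tau^{-n}e_k$ cannot yield the advertised decay $a_k\lesssim\tau^{\beta_v k}(\cdot)$. The density bounds were designed (in the proof of Theorem~\ref{theo:homo-mixed-hol}) to control the sharp error terms $r_i^{\bar a_1}(\mean_{B^i}|Dv|^{q_0})^{1/q_0}$ and $r_i^{\bar a_2}[\tail_i/r_i^{p'}]$, for which the gap $4\ve>0$ produces decay; they are not sharp enough for the weaker weights $r_i^{\ve_1}$ and $r_i^{1-(p-1)\ve_1}$ that appear after Young's inequality. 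To repair your approach you would need to replace the scale-$r_k$ density bounds by the Lipschitz estimate \eqref{homo-mixed-lip} evaluated once at the initial scale $r_0$, giving a $k$-uniform bound $\sup_{B_{r_k}}|Dv|\leq C_0$, and re-derive the one-step error with that uniform factor; but that is a different argument from the one you describe and it considerably complicates the bookkeeping compared with the paper's $\inf/\osc$ split.
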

\begin{proof}
We may assume that $\rho \leq r/2$ without loss of generality.  
We have
\begin{align} \label{homo-mixed-hol-1}
\lefteqn{ \mean{B_{\rho}} |A(Dv) - (A(Dv))_{B_{\rho}}| \dx \notag  \leq c \mean{B_{\rho}} |A(Dv) - A((Dv)_{B_{\rho}})| \dx } \notag \\
& \qquad\qquad \leq c \left(\sup_{B_{r/2}} |Dv|\right)^{p-2} \mean{B_{\rho}} |Dv - (Dv)_{B_{\rho}}| \dx \notag \\
& \qquad\qquad \leq c \left(\inf_{B_{r/2}} |Dv|\right)^{p-2} \mean{B_{\rho}} |Dv - (Dv)_{B_{\rho}}| \dx + c \left( \osc_{B_{r/2}} Dv \right)^{p-2} \mean{B_{\rho}} |Dv - (Dv)_{B_{\rho}}| \dx \notag \\
& \qquad \qquad \eqqcolon (\mathbf{I})  + (\mathbf{II})\, . 
\end{align}
First, using Theorem~\ref{theo:homo-mixed-hol} with $\varepsilon_{1}$ replaced by $\varepsilon_{1}/(p-1)$, one can estimate $ (\mathbf{I})$ as follows: 
    \begin{align*}
        (\mathbf{I}) & \leq c \left( \frac{\rho}{r} \right)^{\beta_{v}} \left(\inf_{B_{r/2}} |Dv|\right)^{p-2} \Bigg[ \mean{B_{r/2}} |Dv - (Dv)_{B_{r/2}}| \dx + r^{(1 - s - \ve_{1})/(p-1)} \left( \mean{B_{r}} |Dv|^{p-1} \dx \right)^{1/(p-1)} \\
        & \qquad \qquad \qquad \qquad \qquad \qquad \quad + r^{(1 - \ve_{1})/(p-1)} \left[ \frac{1}{r^{p'}} \tail(v-(v)_{B_{r}}; r) \right] \Bigg] \\
        & \leq c \left( \frac{\rho}{r} \right)^{\beta_{v}} \Bigg[ \mean{B_{r/2}} |Dv|^{p-2}|Dv - \xi| \dx + r^{(1 - s - \ve_{1})/(p-1)} \mean{B_{r}} |Dv|^{p-1} \dx \\
        & \qquad \qquad \qquad + r^{\ve_{1}(p-2)/(p-1)} \left( \mean{B_{r}} |Dv|^{p-1} \dx \right)^{(p-2)/(p-1)} r^{1/(p-1) - \ve_{1}} \left[ \frac{1}{r^{p'}} \tail(v-(v)_{B_{r}}; r) \right] \Bigg] \\
        & \leq c \left( \frac{\rho}{r} \right)^{\beta_{v}} \Bigg[ \mean{B_{r/2}} |Dv|^{p-2}|Dv - \xi| \dx + r^{\ve_{1}} \mean{B_{r}} |Dv|^{p-1} \dx + r^{1 - (p-1)\ve_{1}} \left[ \frac{1}{r^{p'}} \tail(v-(v)_{B_{r}}; r) \right]^{p-1} \Bigg]\,,
    \end{align*}
    where $\xi$ is an arbitrary $n$-dimensional vector.
    Taking $\xi \in \rn$ to satisfy $A(\xi) = (A(Dv))_{B_{r}}$, we have
    \begin{align*}
        (\mathbf{I}) & \leq c \left( \frac{\rho}{r} \right)^{\beta_{v}} \Bigg[ \mean{B_{r}} |A(Dv) - (A(Dv))_{B_{r}}| \dx + r^{\ve_{1}} \mean{B_{r}} |Dv|^{p-1} \dx + r^{1 - (p-1)\ve_{1}} \left[ \frac{1}{r^{p'}} \tail(v-(v)_{B_{r}}; r) \right]^{p-1} \Bigg] \,.
    \end{align*}
    Next, applying Theorem~\ref{theo:homo-mixed-hol} twice, 
    we infer that
    \begin{align*}
        (\mathbf{II}) 
        & \leq c \left( \osc_{B_{r/2}} Dv \right)^{p-2} \left( \frac{\rho}{r} \right)^{\beta_{v}} \Bigg[ \mean{B_{r}} |Dv - (Dv)_{B_{r}}| \dx + r^{(1-s)/(p-1)-\varepsilon_{1}} \left( \mean{B_{r}}|Dv|^{p-1} \dx \right)^{1/(p-1)}  \notag \\
        & \qquad\qquad\qquad\qquad\qquad\qquad\quad + r^{1/(p-1)-\varepsilon_{1}} \left[ \frac{1}{r^{p'}} \tail(v-(v)_{B_{r}};  r) \right] \Bigg]  \notag \\
        & \leq c \left( \frac{\rho}{r} \right)^{\beta_{v}} \Bigg[ \mean{B_{r}} |Dv - (Dv)_{B_{r}}|^{p-1} \dx + r^{1-s - (p-1)\ve_{1}} \mean{B_{r}}|Dv|^{p-1} \dx  \notag \\
        & \qquad \qquad \qquad  + r^{1 - (p-1)\ve_{1}} \left[ \frac{1}{r^{p'}} \tail(v-(v)_{B_{r}}; r) \right] ^{p-1} \Bigg]\,. \notag
    \end{align*}
From the choice of $\xi$, we have
\[ \mean{B_{r}} |Dv - (Dv)_{B_{r}}|^{p-1} \dx
    \leq c \mean{B_{r}} |Dv - \xi|^{p-1} \dx
    \leq c \mean{B_{r}} |A(Dv) - (A(Dv))_{B_{r}}| \dx\]
and therefore
\begin{align*}
(\mathbf{II})
        & \leq c \left( \frac{\rho}{r} \right)^{\beta_{v}} \Bigg[ \mean{B_{r}} |A(Dv) - (A(Dv))_{B_{r}}| \dx + r^{\ve_{1}} \mean{B_{r}} |Dv|^{p-1} \dx  \notag \\
        & \qquad \qquad \qquad   + r^{1 - (p-1)\ve_{1}} \left[ \frac{1}{r^{p'}} \tail(v-(v)_{B_{r}};  r) \right]^{p-1} \Bigg]\,.
\end{align*}
Connecting the estimates found for $(\mathbf{I})$ and $(\mathbf{II})$ to \eqref{homo-mixed-hol-1}, we conclude with the desired estimate.
\end{proof}

\section{Excess decay estimates for measure data mixed problems}\label{sec:ex-dec-measure-data}
\subsection{Comparison between measure data mixed problem and homogeneous mixed problem}\label{sec:comp-homo-mixed-meas-mixed}
Here, we establish several comparison estimates between \eqref{eq:main} and a homogeneous mixed problem.
These estimates, along with those in the previous section, imply excess decay estimates for \eqref{eq:main}.
We first consider $u$ being the weak solution to \eqref{eq:main} under the additional assumption
\begin{equation}\label{regular-data}
\mu\in C_0^\infty(\Omega)\qquad\text{and} \qquad g\in W^{1,p}(\rn)\,,
\end{equation}
which will be eventually removed in Section~\ref{est.sola} below.
 
For any ball $B_{r} \subset \Omega$, let us consider $v$ being a weak solution to
\begin{equation}\label{eq:homo-mixed-2}
\left\{
\begin{aligned}
-\dv A(Dv)+\cL v & = 0 &\text{in }& B_{r}\,, \\
v & = u &\text{in }& \rn\setminus B_{r}\,. 
\end{aligned} 
\right.
\end{equation} 
We will employ the basic comparison estimates between \eqref{eq:main} and \eqref{eq:homo-mixed-2} obtained in \cite[Lemmas~4.1 and 4.2]{ByunSong}. We stress that the second one does not hold true for $1<p\leq 2-1/n$ with $q\geq 1$.
Similar results for $p$ close to~$1$ can be found in \cite{NP23ARMA}.
\begin{lem}\label{lem:mix-comp1}
Let $u$ and $v$ be the weak solutions to \eqref{eq:main} and \eqref{eq:homo-mixed-2}, respectively, under assumptions \eqref{growth}--\eqref{kernel.growth} with $p>1$, $s\in(0,1)$ and~\eqref{regular-data}. Then there exists a constant $c = c(\data)$ such that, for any $d>0$ and $\gamma >1$, 
\begin{align*}
\mean{B_{r}} \frac{|V(Du) - V(Dv)|^{2}}{(d+|u-v|)^{\gamma}} \dx \leq c \frac{d^{1-\gamma}}{\gamma - 1} \left[ \frac{|\mu|(B_{r})}{r^{n}} \right]\,.
\end{align*}
\end{lem}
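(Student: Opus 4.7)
My approach is the classical Kilpel\"ainen--Mal\'y type test-function technique adapted to the mixed local--nonlocal framework. Fix $d>0$ and $\gamma>1$ and introduce the odd, nondecreasing, Lipschitz function
\[
\Phi(t) \coloneqq \frac{\mathrm{sign}(t)}{\gamma-1}\left[d^{1-\gamma}-(d+|t|)^{1-\gamma}\right], \qquad t \in \R,
\]
which satisfies $\Phi'(t) = (d+|t|)^{-\gamma}$ a.e.\ and $\|\Phi\|_{L^\infty(\R)} \le d^{1-\gamma}/(\gamma-1)$. Since the boundary condition in \eqref{eq:homo-mixed-2} enforces $u=v$ on $\rn \setminus B_r$, the composition $\varphi \coloneqq \Phi(u-v)$ vanishes outside $B_r$ and, by a standard density/approximation argument, is admissible in the weak formulations of both \eqref{eq:main} and \eqref{eq:homo-mixed-2}. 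Subtracting these weak formulations yields
\[
\int_{B_r}(A(Du)-A(Dv))\cdot D\varphi \dx \;+\; \mathcal{N} \;=\; \int_{B_r}\varphi\,\mathrm{d}\mu,
\]
where $\mathcal{N}$ collects the nonlocal contributions.

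For the local term, the identity $D\varphi = (d+|u-v|)^{-\gamma}(Du-Dv)$ combined with the monotonicity estimate from Lemma~\ref{lem:mono} gives
\[
\int_{B_r}(A(Du)-A(Dv))\cdot D\varphi \dx \;\gtrsim\; \int_{B_r}\frac{|V(Du)-V(Dv)|^{2}}{(d+|u-v|)^{\gamma}}\dx.
\]
For $\mathcal{N}$, writing $w \coloneqq u-v$, $a \coloneqq u(x)-u(y)$, $b \coloneqq v(x)-v(y)$ (so $a-b = w(x)-w(y)$), the integrand takes the form $[\,|a|^{p-2}a-|b|^{p-2}b\,]\,[\,\Phi(w(x))-\Phi(w(y))\,]\,K(x,y)$. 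The strict monotonicity of $t\mapsto|t|^{p-2}t$ forces the first bracket to share the sign of $w(x)-w(y)$, while the fact that $\Phi$ is nondecreasing forces the second bracket to share the same sign; combined with $K\ge 0$ from \eqref{kernel.growth}, this yields $\mathcal{N}\ge 0$, and we may discard it.

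Finally, since $\|\varphi\|_\infty \le d^{1-\gamma}/(\gamma-1)$ and, by \eqref{regular-data}, $\mu$ is smooth with total mass $|\mu|(B_r)$ on $B_r$, the right-hand side is bounded by $(\gamma-1)^{-1}d^{1-\gamma}|\mu|(B_r)$; dividing by $|B_r|$ produces the asserted inequality. The only nontrivial step is verifying the sign of $\mathcal{N}$: although the pointwise argument is elementary, it depends decisively on $\Phi$ being odd and nondecreasing (so that $\Phi(u-v)$ is sign-monotone with respect to the differences $w(x)-w(y)$) and on the symmetry and positivity of $K$. Every other ingredient follows directly from the monotonicity of $A$ and the admissibility of $\varphi$.
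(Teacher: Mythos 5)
Your proof is correct and is essentially the argument the paper relies on. Note that the paper does not reprove this lemma: it is cited directly from \cite[Lemma~4.1]{ByunSong}, and your Kilpel\"ainen--Mal\'y test-function argument, combined with the standard observation that the nonlocal contribution is nonnegative (because $t\mapsto|t|^{p-2}t$ and $\Phi$ are both nondecreasing so the two differences share the sign of $w(x)-w(y)$, and $K\ge 0$), is precisely the approach used there and throughout the nonlocal measure-data literature. One small point of presentation: the oddness of $\Phi$ is not needed for the sign of $\mathcal{N}$ (only monotonicity is), but it is what guarantees $\Phi(0)=0$ so that $\varphi=\Phi(u-v)$ vanishes where $u=v$, and it gives the symmetric $L^\infty$ bound used on the right-hand side.
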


\begin{lem}\label{lem:mix-comp2}  Let 
$u$ and $v$ be the weak solutions to \eqref{eq:main} and \eqref{eq:homo-mixed-2}, respectively,  under assumptions \eqref{growth}--\eqref{s-p-range} and~\eqref{regular-data}.  For any $1 \leq q < \min \left\{  \frac{n(p-1)}{n-1},p  \right\}$, there exists a constant $c = c(\data,q)$ satisfying
    \begin{align*}
        \mean{B_{r}} |Du - Dv|^{q} \dx \leq c \left[ \frac{|\mu|(B_{r})}{r^{n-1}} \right]^{q/(p-1)} + c \chi_{\{p<2\}} \left[ \frac{|\mu|(B_{r})}{r^{n-1}} \right]^{q} \left( \mean{B_{r}} |Du|^{q} \dx \right)^{2-p}\,.
    \end{align*}
\end{lem}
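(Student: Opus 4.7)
The plan is to upgrade the weighted $L^{2}$-comparison furnished by Lemma~\ref{lem:mix-comp1} to the $L^{q}$-bound claimed, via the Boccardo--Gallou\"et truncation scheme adapted to the mixed local-nonlocal setting. The splitting into the cases $p\ge 2$ and $p<2$ is forced by the two sides of the monotonicity relation in Lemma~\ref{lem:mono}, and the restriction $q<\min\{n(p-1)/(n-1),p\}$ is exactly the Marcinkiewicz exponent that comes out of the scheme.

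The first step would be to obtain level-set estimates for $u-v$. Since $u=v$ in $\rn\setminus B_{r}$, the difference $u-v$ belongs to $\mathcal{X}^{1,p}_{0}(B_{r})$, and the difference of the two equations, tested against $T_{k}(u-v)$, retains its coercive character: by the monotonicity of $\mathcal L$ the nonlocal cross contributions have a favourable sign, as in the proof of Lemma~\ref{lem:mix-comp1}. This yields the standard Marcinkiewicz-type bound
\[
|\{x\in B_{r}:|u-v|>t\}|\;\le\;c\,t^{-n(p-1)/(n-p)}\,\bigl(|\mu|(B_{r})\bigr)^{n/(n-p)},\qquad t>0,
\]
and consequently control on $\mean{B_{r}}|u-v|^{s}\dx$ for any $s<n(p-1)/(n-p)$.

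For the superquadratic case $p\ge 2$, Lemma~\ref{lem:mono} gives $|Du-Dv|^{p}\lesssim |V(Du)-V(Dv)|^{2}$. Applying Lemma~\ref{lem:mix-comp1} with $d\approx [|\mu|(B_{r})\,r^{1-n}]^{1/(p-1)}$ and $\gamma$ slightly larger than $1$, I would obtain
\[
\int_{B_{r}}\frac{|Du-Dv|^{p}}{(d+|u-v|)^{\gamma}}\dx\;\le\;c\,d^{1-\gamma}\,|\mu|(B_{r}),
\]
and then split
\[
|Du-Dv|^{q}=\frac{|Du-Dv|^{q}}{(d+|u-v|)^{q\gamma/p}}\cdot(d+|u-v|)^{q\gamma/p}
\]
via H\"older with exponents $p/q$ and $p/(p-q)$. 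The first factor is handled by the display above; the second is handled by Step~1. This produces $\mean{B_{r}}|Du-Dv|^{q}\dx\lesssim [|\mu|(B_{r})/r^{n-1}]^{q/(p-1)}$, i.e., the first term of the claim with the second term being absent.

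For the subquadratic case $2-1/n<p<2$, Lemma~\ref{lem:mono} only yields $|Du-Dv|^{2}\lesssim |V(Du)-V(Dv)|^{2}(|Du|+|Dv|)^{2-p}$, so one needs the representation
\[
|Du-Dv|^{q}=\Bigl[\tfrac{|V(Du)-V(Dv)|^{2}}{(d+|u-v|)^{\gamma}}\Bigr]^{q/2}(d+|u-v|)^{q\gamma/2}(|Du|+|Dv|)^{q(2-p)/2}
\]
followed by H\"older with exponents $2/q$ and $2/(2-q)$; the first factor is integrated by Lemma~\ref{lem:mix-comp1}, and a further H\"older splits the remainder into a power of $\mean{B_{r}}(d+|u-v|)^{\sharp}\dx$ (treated by Step~1) times a power of $\mean{B_{r}}(|Du|+|Dv|)^{q}\dx$. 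Using $|Dv|\le |Du|+|Du-Dv|$, the $|Dv|$-piece is either reabsorbed by Young's inequality on the left, or traded against $\mean{B_{r}}|Du|^{q}\dx$. With the optimal choice of $d$, what remains is exactly $[|\mu|(B_{r})/r^{n-1}]^{q}\bigl(\mean{B_{r}}|Du|^{q}\dx\bigr)^{2-p}$, which is the second term of the claim.

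The main obstacle I expect is the quantitative bookkeeping in the subquadratic case: the double H\"older, the balancing between $d$ and $\gamma$, and the careful reabsorption of $|Dv|$ must be arranged so that the admissible range of $q$ comes out to precisely $q<\min\{n(p-1)/(n-1),p\}$ without further loss. A secondary delicate point is already present in Step~1: one must verify that, despite the presence of the nonlocal operator $\mathcal{L}$, the Boccardo--Gallou\"et argument on $T_{k}(u-v)$ produces non-negative contributions from the nonlocal part, so that the classical level-set estimate goes through unchanged; this is plausible because $u-v$ is supported in $B_{r}$ and $\mathcal{L}$ is monotone, but it is the point that links the local measure-data machinery to the mixed framework.
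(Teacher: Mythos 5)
The paper does not prove this lemma itself; it cites it from \cite[Lemmas~4.1--4.2]{ByunSong}. So you should be compared against the natural standard proof, and your overall scheme is the right one: Boccardo--Gallou\"et truncation for the level-set estimates on $u-v$, the weighted comparison of Lemma~\ref{lem:mix-comp1}, and a H\"older-type splitting. The observation you flag at the end is indeed the crux and you have it right: testing the difference of the two equations against $T_k(u-v)$, the nonlocal contribution is nonnegative because $t\mapsto|t|^{p-2}t$ and $T_k$ are nondecreasing and $(u(x)-u(y))-(v(x)-v(y))=(u-v)(x)-(u-v)(y)$, so all classical local estimates pass through. (For $p\ge 2$ the detour through $V$ and Lemma~\ref{lem:mix-comp1} is also unnecessary: $|Du-Dv|^p\lesssim(A(Du)-A(Dv))\cdot(Du-Dv)$, so the plain truncation gives the gradient level-set bound directly.)

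There is, however, a concrete error in your choice of $d$. You set $d\approx[|\mu|(B_r)r^{1-n}]^{1/(p-1)}$, which carries the natural scaling of $Du-Dv$. But in your H\"older split the factor that must be averaged is $(d+|u-v|)^{\gamma q/(p-q)}$, so $d$ has to match the Marcinkiewicz bound for $u-v$ itself, namely $d\approx[|\mu|(B_r)r^{p-n}]^{1/(p-1)}$ (for $p<n$; the case $p\ge n$ is easier and handled separately). Carrying the arithmetic through with your $d$ in the case $p\ge2$ gives
\[
\mean{B_r}|Du-Dv|^q\dx\;\lesssim\;\bigl(|\mu|(B_r)\bigr)^{q/(p-1)}\,r^{-q(np-1)/[p(p-1)]},
\]
which exceeds the claimed $[|\mu|(B_r)/r^{n-1}]^{q/(p-1)}=(|\mu|(B_r))^{q/(p-1)}r^{-q(n-1)/(p-1)}$ by a factor $r^{-q/p}$ (recall $r<1$), so the claim is \emph{not} proved. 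With the corrected $d$ the $\gamma$-dependence collapses: one is left with $d^{q/p}(|\mu|(B_r)/r^n)^{q/p}$, and substituting $d=[|\mu|(B_r)/r^{n-p}]^{1/(p-1)}$ yields exactly $[|\mu|(B_r)/r^{n-1}]^{q/(p-1)}$. The same correction must be made in the subquadratic case; there, writing $Z=[|\mu|(B_r)/r^{n-1}]^{1/(p-1)}$, $X=\mean{B_r}|Du-Dv|^q\dx$, $Y=\mean{B_r}|Du|^q\dx$, the triple H\"older with the corrected $d$ gives $X\lesssim Z^{qp/2}(X+Y)^{(2-p)/2}$, a Young absorption yields $X\lesssim Z^{qp/2}Y^{(2-p)/2}+Z^{q}$, and a second Young step on the geometric mean $Z^{qp/2}Y^{(2-p)/2}=(Z^{q})^{1/2}\bigl(Z^{q(p-1)}Y^{2-p}\bigr)^{1/2}$ produces precisely the two terms of the statement. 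So the approach is sound, but the stated $d$ is wrong and would leave a gap of $r^{-q/p}$; you need $d$ on the $u$-scale, not the $Du$-scale.
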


To proceed further, we choose a ball $B_{Mr} = B_{Mr}(x_{0}) \subset \Omega$ with $M \ge 8$ to be chosen later and consider the following comparison map
\begin{equation}\label{eq:homo-mixed-3}
\left\{
\begin{aligned}
-\dv A(D v_{*})+\cL v_{*} & = 0 & \text{in }& B_{M r}\,, \\
v_{*} & = u & \text{in }& \rn\setminus B_{M r}\,.
\end{aligned} 
\right.
\end{equation}
We assume $Mr \leq R_{0}<1$, where $R_{0}$ is the radius from Theorem \ref{theo:homo-mixed-hol} (cf. \eqref{Rzero}).
Once we have Lemmas~\ref{lem:mix-comp1} and \ref{lem:mix-comp2}, we can also obtain the following comparison estimate, whose proof is exactly the same as that of \cite[Lemma~3]{KuMiguide}. 
\begin{lem}\label{lem:mix-comp3}
Let $u$, $v$, and $v_{*}$ be the weak solutions to \eqref{eq:main}, \eqref{eq:homo-mixed-2}, and \eqref{eq:homo-mixed-3}, respectively, under assumptions \eqref{growth}--\eqref{kernel.growth} with  $p \geq 2$, $s\in(0,1)$ and~\eqref{regular-data}. Let $M$ be as in \eqref{eq:homo-mixed-3}, and assume that there are constants $H \geq 1$ and $\Lambda >0$ such that
\begin{equation*}
\frac{|\mu|(B_{Mr})}{(Mr)^{n-1}} \leq H\Lambda^{p-1}
\end{equation*}
and 
\begin{equation*}
\frac{\Lambda}{H} \leq |Dv_{*}| \leq H \Lambda \qquad \text{in } B_{r}\,.
\end{equation*}
Then there exists $c=c(\data,M,H)$ such that 
\begin{equation*}
\mean{B_{r}} |Du - Dv| \dx \leq c \Lambda^{2-p} \left[ \frac{|\mu|(B_{Mr})}{(Mr)^{n-1}} \right]\,.
\end{equation*}
\end{lem}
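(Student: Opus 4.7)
The plan is to reproduce, in the mixed local-nonlocal setting, the scheme of \cite[Lemma~3]{KuMiguide}. The guiding principle is that under the hypotheses of the lemma, the principal local part of the operator is effectively uniformly elliptic on $B_r$ with ellipticity constant of order $\Lambda^{p-2}$, and the measure-data contribution should scale linearly (rather than as a $(p-1)$-th root) with $|\mu|(B_{Mr})/(Mr)^{n-1}$, as in linear elliptic theory, picking up the factor $\Lambda^{2-p}$ through the rescaling of the ellipticity.

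My first step would be to upgrade the assumed two-sided bound on $v_*$ on $B_r$ to the same pointwise bound (up to absorbing~$H$), which is a direct consequence of the gradient H\"older regularity for homogeneous mixed equations (Theorem~\ref{theo:homo-mixed-hol}) applied on $B_{Mr}$. Next, Lemma~\ref{lem:mix-comp2} applied with $q=1$ once on $B_{Mr}$ to the pair $(u,v_*)$ and once on $B_r$ to the pair $(u,v)$, together with the smallness assumption $|\mu|(B_{Mr})/(Mr)^{n-1}\le H\Lambda^{p-1}$, yields averaged bounds $\mean{B_r}|Du-Dv_*|\dx,\ \mean{B_r}|Du-Dv|\dx \lesssim \Lambda$. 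A Chebyshev-type argument combined with the pointwise lower bound on $|Dv_*|$ then singles out a ``non-degenerate'' subset $E\subset B_r$ with $|E|\ge c_0|B_r|$ on which $|Du|\simeq|Dv|\simeq \Lambda$ pointwise, making the principal part of the operator genuinely uniformly elliptic on $E$ with constants of order~$\Lambda^{p-2}$.

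The core step is to invoke Lemma~\ref{lem:mix-comp1} with a careful choice of parameters (taking $\gamma=2$ and $d$ proportional to $r\Lambda$), exploit the strong monotonicity relation from Lemma~\ref{lem:mono}, which on $E$ reads $|V(Du)-V(Dv)|^2\gtrsim \Lambda^{p-2}|Du-Dv|^2$, and control the denominator $d+|u-v|$ via the trivial bound $|u-v|\lesssim r\Lambda$ on $B_r$ (available in the regular-data regime \eqref{regular-data}, since $u=v$ outside $B_r$ and $v$ inherits a Lipschitz bound of order~$\Lambda$ from $v_*$ through Theorem~\ref{theo:homo-mixed-hol}). Assembling the resulting energy estimate on $E$, absorbing the contribution of the complement $B_r\setminus E$ via the already-established averaged bound on $|Du-Dv|$, and using the monotonicity $|\mu|(B_r)\le|\mu|(B_{Mr})$, one arrives at the desired linear control of $\mean{B_r}|Du-Dv|\dx$ by $\Lambda^{2-p}|\mu|(B_{Mr})/(Mr)^{n-1}$, with constants depending only on $\data$, $M$ and $H$.

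The main obstacle compared with the purely local proof is the appearance of the nonlocal difference $\cL u - \cL v$ in the weak equation satisfied by $u-v$. Fortunately, whenever one tests against a monotone nonlinear function of $u-v$, this contribution has a favourable sign, due to the symmetry of the kernel~$K$ and the monotonicity of $t\mapsto|t|^{p-2}t$, and hence may be dropped from the left-hand side of the resulting energy inequality. This is precisely the reason why Lemmas~\ref{lem:mix-comp1} and~\ref{lem:mix-comp2}, on which the whole scheme rests, take the same form as in the purely local case, and why the proof of Lemma~\ref{lem:mix-comp3} may proceed \emph{verbatim} as in \cite[Lemma~3]{KuMiguide}.
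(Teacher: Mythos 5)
Your proposal has a genuine gap, and in fact the computations you sketch cannot produce the stated estimate even granting every intermediate claim.

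The central problem is that the chain ``Lemma~\ref{lem:mix-comp1} with $\gamma=2$ and $d\sim r\Lambda$, then the quadratic monotonicity $|V(Du)-V(Dv)|^2\gtrsim\Lambda^{p-2}|Du-Dv|^2$, then Cauchy--Schwarz'' yields at best
\[
\mean{B_r}|Du-Dv|\dx\;\lesssim\;\Lambda^{(3-p)/2}\left[\frac{|\mu|(B_{Mr})}{(Mr)^{n-1}}\right]^{1/2}\,,
\]
and this is strictly \emph{weaker} than the asserted $\Lambda^{2-p}\big[|\mu|(B_{Mr})/(Mr)^{n-1}\big]$ whenever $|\mu|(B_{Mr})/(Mr)^{n-1}\ll\Lambda^{p-1}$. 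But this is exactly the regime in which the lemma is used: the hypothesis $|\mu|(B_{Mr})/(Mr)^{n-1}\le H\Lambda^{p-1}$ is only an upper bound (indeed Lemma~\ref{lem:mix-comp3} is invoked in Lemma~\ref{lem:nondeg-comp} under \eqref{nondeg-small-mu}, where $\theta_*$ is small). The Chebyshev step producing $E$ with $|E|\ge c_0|B_r|$ and a \emph{fixed} $c_0$ does not repair this: on $B_r\setminus E$ the ``already-established averaged bound'' from Lemma~\ref{lem:mix-comp2} is of the weaker order $\big[|\mu|(B_{Mr})/(Mr)^{n-1}\big]^{1/(p-1)}$, and the factor $(1-c_0)^{1-1/q}$ is a constant, so the complement's contribution cannot be driven down to $\Lambda^{2-p}\big[|\mu|(B_{Mr})/(Mr)^{n-1}\big]$ as the measure term shrinks.

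Two further points. The claimed pointwise bound $|u-v|\lesssim r\Lambda$ on $B_r$ is not available: your justification controls the oscillation of $v$ only, while $u$ solves the problem with measure data $\mu$ and has no Lipschitz bound with constants depending solely on $(\data,M,H)$; what is available is only an averaged Poincar\'e--Sobolev bound on $|u-v_*|$, so the weight $(d+|u-v_*|)^{-\gamma}$ must be handled via a level-set/layer-cake decomposition, not an $L^\infty$ bound. Also, you freely interchange $Dv$ and $Dv_*$: the two-sided bound $\Lambda/H\le|Dv_*|\le H\Lambda$ on $B_r$ is a hypothesis on $v_*$, and passing to a pointwise bound for $Dv$ requires additional work (this is essentially Lemma~\ref{lem:Dv-low-up}, which is proved \emph{after} Lemma~\ref{lem:mix-comp3}). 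The argument of \cite[Lemma~3]{KuMiguide} (and its ancestor in \cite{KuMiARMA2013}) that the paper cites is genuinely more delicate than the sketch you give: $\gamma$ must be taken close to a Sobolev exponent $q^\ast$ rather than $\gamma=2$ (which already fails for large $p$), $B_r$ is decomposed by level sets of $|u-v_*|$, and one uses the full strength of Lemma~\ref{lem:mono}, including the lower bound $|V(z_1)-V(z_2)|^2\gtrsim|z_1-z_2|^p$, not only the $\Lambda^{p-2}$-scaled quadratic one. Your observation that the nonlocal term is harmless because it has a favourable sign when tested against monotone functions of $u-v_*$ is correct and is indeed why the local scheme carries over, but the step that actually produces the linear scaling in $\mu$ is missing from your writeup.
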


\subsection{Comparison estimates involving the $A(\cdot)$-map 
in the case $p \geq 2$}\label{sec:alternatives}

This section is devoted to enhancing the comparison estimate for $A(Du)$ and $A(Dv)$.
To achieve this we will follow the principal idea of employing the two-scale alternatives as in \cite{AvKuMi}.
In the rest of this section, we fix the constant
\begin{equation}\label{Lambda}
    \Lambda \coloneqq \left( \mean{B_{r/M}} |Du|^{p-1} \dx \right)^{1/(p-1)}\,.
\end{equation}
Recall that as we focus on $p \geq 2$ in this section, the constants defined in \eqref{bar a def} are given by
\[ \bar a_1 = \frac{1-s}{p-1} \qquad\text{and}\qquad \bar a_2 = \frac{1}{p-1}\,. \]
We assume $\ve_{1} \in (0, (1-s)/p)$ and $0< M r \leq \wt R_{0}$ as in Proposition~\ref{prop:homo-mixed-hol2}.

We will consider first the \underline{degenerate alternative}
\begin{equation}\label{deg-alt}
    \mean{B_{Mr}} |A(Du)- (A(Du))_{B_{Mr}}| \dx \geq \theta |(A(Du))_{B_{r/M}}|\,,
\end{equation}
for a free parameter $\theta \in (0,1)$ whose value is to be chosen later in \eqref{theta-choice}. Next, we will focus on the \underline{non-degenerate alternative}
\begin{equation}\label{nondeg-alt}
    \mean{B_{Mr}} |A(Du)- (A(Du))_{B_{Mr}}| \dx < \theta |(A(Du))_{B_{r/M}}|\,.
\end{equation}
Observe that under \eqref{nondeg-alt}, for any $\rho \in [r/M, Mr]$ we have
\begin{align*}
    \lefteqn{ \mean{B_{\rho}}|Du|^{p-1} \dx
    \leq c\mean{B_{\rho}}|A(Du) - (A(Du))_{B_{r/M}}| \dx + c|(A(Du))_{B_{r/M}}| } \\
    & \leq c \mean{B_{\rho}} |A(Du) - (A(Du))_{B_{Mr}}| \dx + c \mean{B_{r/M}} |A(Du) - (A(Du))_{B_{Mr}}| \dx + c |(A(Du))_{B_{r/M}}| \\
    & \leq c M^{2n} \mean{B_{Mr}} |A(Du) - (A(Du))_{B_{Mr}}| \dx + c |(A(Du))_{B_{r/M}}| \\
    & \leq c (1 + \theta M^{2n}) |(A(Du))_{B_{r/M}}|\,.
\end{align*}
Assume that $\theta = \theta(n,M) \in (0,1)$ is small enough to satisfy
\begin{equation}\label{theta-choice}
    \theta M^{2n} \leq 1\,.
\end{equation}
Then, for any $\rho \in [r/M, Mr]$ and $\Lambda$ from~\eqref{Lambda}, there exists $c$ depending only on $\data$ such that
\begin{equation}\label{nondeg-Dubdd}
    \mean{B_{\rho}}|Du|^{p-1} \dx \leq c \Lambda^{p-1}\,.
\end{equation}
Finally, we will consider \underline{further alternatives}, i.e., we will assume that either for some $\theta_{*} \in(0,1)$
\begin{equation}\label{nondeg-large-mu}
    \theta_{*} \Lambda^{p-1} \leq \left[ \frac{|\mu|(B_{Mr})}{(Mr)^{n-1}} \right] + (Mr)^{1 - (p-1) \ve_{1}} \left[ \frac{1}{(Mr)^{p'}} \tail(u-(u)_{B_{Mr}}; Mr) \right]^{p-1}
\end{equation}
or
\begin{equation}\label{nondeg-small-mu}
    \theta_{*} \Lambda^{p-1} > \left[ \frac{|\mu|(B_{Mr})}{(Mr)^{n-1}} \right] + (Mr)^{1 - (p-1) \ve_{1}} \left[ \frac{1}{(Mr)^{p'}} \tail(u-(u)_{B_{Mr}}; Mr) \right]^{p-1}\,.
\end{equation}
Note that we will determine the constants $M$ and $\theta_{*}$ in Lemma~\ref{lem:Dv-low-up}.

 We start with the comparison estimate under the degenerate alternative assumption \eqref{deg-alt}.
\begin{lem}\label{lem:deg-comp}
Let $u$ and $v$ be the weak solutions to \eqref{eq:main} and \eqref{eq:homo-mixed-2}, respectively, under assumptions \eqref{growth}--\eqref{kernel.growth} with $p \geq 2$, $s\in(0,1)$ and \eqref{regular-data}. 
Assume further that $M$ is as in \eqref{eq:homo-mixed-3} and $\theta \in(0,1)$ is as in \eqref{theta-choice}. 
Under the degenerate alternative assumption \eqref{deg-alt}, there exists  $c=c(\data)$ such that for any $\ve_{2} \in (0,1)$ {and $B_{Mr}\subset\Omega$} we have
\begin{align*}
\mean{B_{r}}|A(Du) - A(Dv)| \dx
& \leq c \ve_{2} \left( M^{2n} + \frac{1}{\theta} \right) \mean{B_{Mr}} |A(Du) - (A(Du))_{B_{Mr}}| \dx \notag + c \ve_{2}^{2-p} M^{n-1} \left[ \frac{|\mu|(B_{Mr})}{(Mr)^{n-1}} \right]\,.
\end{align*} 
\end{lem}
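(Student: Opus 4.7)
The strategy is to reduce the $A(\cdot)$-comparison to controlling $|Du-Dv|$ in the $L^{p-1}$ scale, then insert the output of Lemma~\ref{lem:mix-comp2} together with the degenerate alternative \eqref{deg-alt}. I focus on $p>2$; the borderline case $p=2$ follows at once from $|A(Du)-A(Dv)|\approx|Du-Dv|$ (Lemma~\ref{lem:mono}) combined with Lemma~\ref{lem:mix-comp2} at $q=1$, and corresponds to $\ve_2^{2-p}=1$.

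The pointwise inequality in Lemma~\ref{lem:mono} gives
\[
|A(Du)-A(Dv)|\le c\,(|Du|+|Dv|)^{p-2}|Du-Dv|.
\]
To this I apply Young's inequality with conjugate exponents $q=(p-1)/(p-2)$ and $q'=p-1$: for any $\ve_2\in(0,1)$,
\[
(|Du|+|Dv|)^{p-2}|Du-Dv|\le c\ve_2\,(|Du|+|Dv|)^{p-1}+c\ve_2^{\,2-p}|Du-Dv|^{p-1}.
\]
Averaging over $B_r$, the second summand is handled directly by Lemma~\ref{lem:mix-comp2} with $q=p-1$; since $p\ge 2$ the subquadratic correction disappears and the contribution is bounded by $c\ve_2^{\,2-p}[|\mu|(B_r)/r^{n-1}]\le c\ve_2^{\,2-p}M^{n-1}[|\mu|(B_{Mr})/(Mr)^{n-1}]$, matching the second term on the right-hand side of the claim.

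For the first summand, another use of Lemma~\ref{lem:mix-comp2} (with $q=p-1$) reduces $\mean{B_r}(|Du|+|Dv|)^{p-1}\dx$, up to a measure term that is dominated by the $\ve_2^{\,2-p}$ contribution since $\ve_2\le 1\le \ve_2^{\,2-p}$, to $\mean{B_r}|Du|^{p-1}\dx$, which by \eqref{growth} is comparable to $\mean{B_r}|A(Du)|\dx$. Splitting this average as
\[
\mean{B_r}|A(Du)|\dx\le \mean{B_r}\bigl|A(Du)-(A(Du))_{B_{Mr}}\bigr|\dx+\bigl|(A(Du))_{B_{Mr}}\bigr|,
\]
the first term is bounded by $M^n$ times the excess of $A(Du)$ on $B_{Mr}$ after enlargement of the ball. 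For the mean value term I exploit the degenerate alternative \eqref{deg-alt}: writing
\[
\bigl|(A(Du))_{B_{Mr}}\bigr|\le \bigl|(A(Du))_{B_{Mr}}-(A(Du))_{B_{r/M}}\bigr|+\bigl|(A(Du))_{B_{r/M}}\bigr|,
\]
the first summand is controlled by $M^{2n}$ times the excess on $B_{Mr}$ (enlarging $B_{r/M}\subset B_{Mr}$), and the second summand is controlled by $\theta^{-1}$ times the same excess by \eqref{deg-alt} itself. Assembling these estimates produces the prefactor $c\ve_2(M^{2n}+\theta^{-1})$ on the excess of $A(Du)$, and combined with the previous paragraph yields the claim.

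The main delicate point will be ensuring that the spurious measure term picked up in the estimate of $\mean{B_r}(|Du|+|Dv|)^{p-1}\dx$ stays subordinate to the one from the Young-inequality split, so that only the single term $c\ve_2^{\,2-p}M^{n-1}[|\mu|(B_{Mr})/(Mr)^{n-1}]$ survives; this is exactly where $\ve_2\le 1$ and $p\ge 2$ together are used. Everything else is a bookkeeping argument about enlargements of balls and applications of results already available (Lemmas~\ref{lem:mono} and~\ref{lem:mix-comp2}), so no further new ingredient is needed.
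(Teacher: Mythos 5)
Your proof is correct and follows essentially the same route as the paper: Lemma~\ref{lem:mono}, Young's inequality with weight $\ve_2$, Lemma~\ref{lem:mix-comp2}, and ball enlargement plus the degenerate alternative \eqref{deg-alt} to absorb the mean-value term. The only cosmetic difference is that the paper first uses the pointwise bound $|A(Du)-A(Dv)|\le c|Du|^{p-2}|Du-Dv|+c|Du-Dv|^{p-1}$ (from Lemma~\ref{lem:mono} and $|Dv|\le|Du|+|Du-Dv|$), so the Young split produces $\ve_2|Du|^{p-1}$ directly and avoids the extra absorption step that you correctly handle via $\ve_2\le1\le\ve_2^{\,2-p}$ for $p\ge 2$.
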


\begin{proof}
From assumption \eqref{deg-alt}, we have
    \begin{align*}
        \mean{B_{r}}|Du|^{p-1} \dx
        & \leq c \mean{B_{r}} |A(Du) - (A(Du))_{B_{r/M}}| \dx + c |(A(Du))_{B_{r/M}}| \\
        & \leq c \mean{B_{r}} |A(Du) - (A(Du))_{B_{Mr}}| \dx + c |(A(Du))_{B_{Mr}} - (A(Du))_{B_{r/M}}| + c |(A(Du))_{B_{r/M}}| \\
        & \leq c \left( M^{2n} + \frac{1}{\theta} \right) \mean{B_{Mr}} |A(Du) - (A(Du))_{B_{Mr}}| \dx\,.
    \end{align*}
    Then by Lemmas~\ref{lem:mono} and \ref{lem:mix-comp2} we deduce
    \begin{align*}
        \mean{B_{r}} |A(Du) - A(Dv)| \dx
        & \leq c \mean{B_{r}} |Du|^{p-2}|Du - Dv| \dx + c \mean{B_{r}} |Du - Dv|^{p-1} \dx \\
        & \leq c \ve_{2} \mean{B_{r}} |Du|^{p-1} \dx + c \ve_{2}^{2-p} \mean{B_{r}} |Du-Dv|^{p-1} \dx \\
        & \leq c \ve_{2} \left( M^{2n} + \frac{1}{\theta} \right) \mean{B_{Mr}} |A(Du) - (A(Du))_{B_{Mr}}| \dx + c \ve_{2}^{2-p} \left[ \frac{|\mu|(B_{r})}{r^{n-1}} \right] \\
        & \leq c \ve_{2} \left( M^{2n} + \frac{1}{\theta} \right) \mean{B_{Mr}} |A(Du) - (A(Du))_{B_{Mr}}| \dx + c \ve_{2}^{2-p} M^{n-1} \left[ \frac{|\mu|(B_{Mr})}{(Mr)^{n-1}} \right]\,.
    \end{align*}
    This completes the proof.
\end{proof}

In the non-degenerate alternative case \eqref{nondeg-alt}, we first establish the following estimate under \eqref{nondeg-large-mu}.
\begin{lem}\label{lem:nondeg-large-mu}
Let $u$ and $v$ be the weak solutions to \eqref{eq:main} and \eqref{eq:homo-mixed-2},  respectively, under assumptions \eqref{growth}--\eqref{kernel.growth} with $p \geq 2$, $s\in(0,1)$ and \eqref{regular-data}. Suppose $M$ is as in \eqref{eq:homo-mixed-3} and $B_{Mr}\subset\Omega$. 
Assume \eqref{nondeg-alt} and \eqref{nondeg-large-mu} are satisfied for some $\theta_* \in(0,1)$.
Then there exists a constant $c=c(\data)$ such that
\begin{equation*}
    \mean{B_{r/M}} |A(Du) - A(Dv)| \dx
    \leq \frac{c}{\theta_{*}} \left[ \left[ \frac{|\mu|(B_{Mr})}{(Mr)^{n-1}} \right] + (Mr)^{1 - (p-1)\ve_{1}} \left[ \frac{1}{(Mr)^{p'}} \tail(u-(u)_{B_{Mr}};Mr) \right]^{p-1} \right]\,.
\end{equation*}
\end{lem}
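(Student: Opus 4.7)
The plan is to combine the pointwise $A$-monotonicity from Lemma~\ref{lem:mono} with the basic $L^{p-1}$-comparison estimate of Lemma~\ref{lem:mix-comp2}, and then to convert the pre-factor $\Lambda^{p-2}$ that arises into $1/\theta_{*}$ by invoking the large-measure assumption \eqref{nondeg-large-mu}. Throughout, let $\mathcal{N}$ denote the right-hand side of \eqref{nondeg-large-mu}; every appearance of a power of $M$ is absorbed into $c=c(\data)$, since $M$ is to be fixed universally later.

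Since $p\ge 2$, Lemma~\ref{lem:mono} together with the elementary inequality $(a+b)^{p-2}\le c(a^{p-2}+b^{p-2})$, followed by $|Dv|^{p-2}\le c(|Du|^{p-2}+|Dv-Du|^{p-2})$, yields the pointwise bound
\[ |A(Du)-A(Dv)| \le c\,|Du|^{p-2}|Du-Dv| + c\,|Du-Dv|^{p-1}\,. \]
Integrating over $B_{r/M}$ and applying H\"older's inequality with exponents $(p-1)/(p-2)$ and $p-1$ to the first term (which reduces to an identity when $p=2$), together with the definition \eqref{Lambda} of $\Lambda$, I obtain
\[ \mean{B_{r/M}}|A(Du)-A(Dv)|\dx \le c\,\Lambda^{p-2}\left(\mean{B_{r/M}}|Du-Dv|^{p-1}\dx\right)^{1/(p-1)} + c\mean{B_{r/M}}|Du-Dv|^{p-1}\dx\,. \]

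Since $p\ge 2$, the choice $q=p-1$ is admissible in Lemma~\ref{lem:mix-comp2} and the $\chi_{\{p<2\}}$-term drops out. Enlarging from $B_{r/M}$ to $B_{r}$ (a factor $M^{n}$) and then comparing $|\mu|(B_{r})/r^{n-1}$ with $|\mu|(B_{Mr})/(Mr)^{n-1}$ (a factor $M^{n-1}$) produces
\[ \mean{B_{r/M}}|Du-Dv|^{p-1}\dx \le c\left[\frac{|\mu|(B_{Mr})}{(Mr)^{n-1}}\right] \le c\,\mathcal{N}\,, \]
so that the previous display becomes $\mean{B_{r/M}}|A(Du)-A(Dv)|\dx \le c\,\Lambda^{p-2}\mathcal{N}^{1/(p-1)} + c\,\mathcal{N}$.

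It remains to absorb the $\Lambda^{p-2}$ factor, which is the only delicate step. Assumption \eqref{nondeg-large-mu} is precisely $\Lambda^{p-1}\le \mathcal{N}/\theta_{*}$, hence $\Lambda^{p-2}\le (\mathcal{N}/\theta_{*})^{(p-2)/(p-1)}$, and therefore
\[ \Lambda^{p-2}\mathcal{N}^{1/(p-1)} \le \theta_{*}^{-(p-2)/(p-1)}\,\mathcal{N} \le \theta_{*}^{-1}\,\mathcal{N}\,, \]
where the last inequality uses $\theta_{*}\in(0,1)$ and $(p-2)/(p-1)\le 1$. Combined with the trivial $\mathcal{N}\le \mathcal{N}/\theta_{*}$, this is precisely the claimed bound. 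The main (and only) subtle point is this final trade-off: it is what dictates the exact $1/\theta_{*}$-scaling on the right-hand side, while all the remaining steps are direct applications of monotonicity and of Lemma~\ref{lem:mix-comp2}.
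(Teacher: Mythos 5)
Your proof is correct and follows essentially the same route as the paper: decompose $|A(Du)-A(Dv)|$ via Lemma~\ref{lem:mono}, invoke Lemma~\ref{lem:mix-comp2} (with $q=p-1$, admissible since $p\ge 2$) for the difference term, and use \eqref{nondeg-large-mu} to absorb the $\Lambda$-dependence into $1/\theta_*$. The only cosmetic difference is that the paper applies Young's inequality to the product $|Du|^{p-2}|Du-Dv|$ to obtain $\Lambda^{p-1}\le \mathcal{N}/\theta_*$ directly, whereas you apply H\"older's inequality and then trade $\Lambda^{p-2}\mathcal{N}^{1/(p-1)}$ for $\theta_*^{-1}\mathcal{N}$; both are elementary and yield the same conclusion.
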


\begin{proof}
By Lemma~\ref{lem:mono} and Young's inequality, we have
\begin{align*}
\mean{B_{r/M}} |A(Du) - A(Dv)| \dx
& \leq c \mean{B_{r/M}} |Du|^{p-2}|Du - Dv| \dx + c \mean{B_{r/M}} |Du - Dv|^{p-1} \dx \notag \\
& \leq c \mean{B_{r/M}} |Du|^{p-1} \dx + c \mean{B_{r/M}} |Du - Dv|^{p-1} \dx\,.
\end{align*}
Then \eqref{nondeg-large-mu} and Lemma~\ref{lem:mix-comp2} give the desired estimate.
\end{proof}

From now on, we will discuss the non-degenerate alternative case \eqref{nondeg-alt} with \eqref{nondeg-small-mu}.
Under these underlying assumptions, where the mean oscillation of $A(Du)$ is small, and the influence of the inhomogeneous term and the nonlocal tail term is sufficiently small, we can provide pointwise upper and lower bounds for $|Dv|$, as stated in the following lemma. This result is crucial for the application of the comparison estimate of Lemma~\ref{lem:mix-comp3}.
In the proof of the lemma below, we will determine the constants mentioned earlier: $M$ and $\theta_{*}$, ensuring that these constants depend only on $\data$ and $\ve_{1}$.
\begin{lem}\label{lem:Dv-low-up}
Let $v$ and $v_{*}$ be the weak solutions to \eqref{eq:homo-mixed-2} and \eqref{eq:homo-mixed-3}, respectively, under assumptions \eqref{growth}--\eqref{kernel.growth} with $p \geq 2$, $s\in(0,1)$ and \eqref{regular-data}, and let $\ve_{1} \in (0, (1-s)/p)$. 
Recall $\Lambda$ from~\eqref{Lambda} and $\wt R_0$ from Theorem~\ref{theo:homo-mixed-hol}.
There exist $R_{1}=R_{1}(\data, \ve_{1}) \leq \wt R_{0}$, $M=M(\data, \ve_{1}) \geq 8$, and $\theta_{*} = \theta_{*}(\data, \ve_{1}) \in (0,1)$ such that if \eqref{nondeg-alt}, \eqref{theta-choice}, and \eqref{nondeg-small-mu} hold for a ball $B_{Mr}\subset\Omega$ with $r \leq R_{1}$, then for some $H$ depending only on $\data$ and $\ve_{1}$ it holds
\begin{equation}\label{Dv-low-up-1}
\frac{\Lambda}{H} \leq |Dv_{*}| \leq  H \Lambda \quad \text{in } B_{r}
\end{equation}
and
\begin{equation}\label{Dv-low-up-2}
\frac{\Lambda}{H} \leq |Dv| \leq  H \Lambda \quad \text{in } B_{r/M}\,.
\end{equation}
\end{lem}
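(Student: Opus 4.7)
\emph{Plan.} We establish each bound in two steps: an upper bound for $|Dv_{*}|$ on $B_{Mr/2}$ (resp.\ $|Dv|$ on $B_{r/2}$) via the Lipschitz estimate \eqref{homo-mixed-lip}, and a matching lower bound on the smaller ball by combining (i) the fact that $|(A(Du))_{B_{r/M}}|$ is comparable to $\Lambda^{p-1}$ (a consequence of the non-degenerate alternative), (ii) a mean-value comparison between $A(Du)$ and $A(Dv_{*})$, and (iii) small oscillation of $A(Dv_{*})$ coming from Proposition~\ref{prop:homo-mixed-hol2} together with Campanato's embedding. The parameters are fixed in the order $M$ (large), $\theta_{*}$ (small, depending on $M$), then $R_{1}$ (small, depending on $M$ and $\theta_{*}$), so that the Campanato decay factor $M^{-\beta_{v}}$ dominates the comparison and tail corrections.

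\emph{Upper bounds.} Applying \eqref{homo-mixed-lip} to $v_{*}$ on $B_{Mr}$ yields
\[
\sup_{B_{Mr/2}}|Dv_{*}| \;\le\; c\left(\mean{B_{Mr}}|Dv_{*}|^{p-1}\dx\right)^{\!1/(p-1)} + c(Mr)^{\bar{a}_{2}-\ve_{1}}\left[(Mr)^{-p'}\tail(v_{*}-(v_{*})_{B_{Mr}};Mr)\right].
\]
Triangle inequality, \eqref{nondeg-Dubdd}, Lemma~\ref{lem:mix-comp2} at $q=p-1$, and \eqref{nondeg-small-mu} bound the first summand by $c\Lambda$. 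For the tail, since $v_{*}=u$ on $\rn\setminus B_{Mr}$ we decompose
\[
\tail(v_{*}-(v_{*})_{B_{Mr}};Mr)^{p-1} \;\le\; c\,\tail(u-(u)_{B_{Mr}};Mr)^{p-1} + c(Mr)^{(1-s)p}\,|(u)_{B_{Mr}}-(v_{*})_{B_{Mr}}|^{p-1};
\]
the first piece is absorbed by \eqref{nondeg-small-mu} and the second is handled by Poincar\'e's inequality on $B_{Mr}$ combined with Lemma~\ref{lem:mix-comp2} and \eqref{nondeg-small-mu}. This yields the upper bound in \eqref{Dv-low-up-1}. Replacing $v_{*}$, $Mr$ by $v$, $r$ gives the upper bound in \eqref{Dv-low-up-2} analogously.

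\emph{Lower bounds.} From \eqref{nondeg-alt}, \eqref{theta-choice} and \eqref{nondeg-Dubdd} one checks that $\mean{B_{r/M}}|A(Du)-(A(Du))_{B_{r/M}}|\dx\le 2|(A(Du))_{B_{r/M}}|$, whence $|(A(Du))_{B_{r/M}}|\ge c^{-1}\Lambda^{p-1}$. Next, Lemma~\ref{lem:mono}, Young's inequality with parameter $\delta\in(0,1)$, the upper bound just established, and Lemma~\ref{lem:mix-comp2} (scaled from $B_{Mr}$ down to $B_{r/M}$, which costs a factor $M^{2n}$) give
\[
|(A(Du))_{B_{r/M}}-(A(Dv_{*}))_{B_{r/M}}| \;\le\; c\delta\Lambda^{p-1}+c\delta^{-(p-2)}M^{2n}\theta_{*}\Lambda^{p-1}.
\]
For the oscillation, apply Proposition~\ref{prop:homo-mixed-hol2} to $v_{*}$ at balls $B_{\rho}(x)\subset B_{Mr/4}(x)\Subset B_{Mr}(x_{0})$ centered at arbitrary $x\in B_{r}$, using the upper bound $|Dv_{*}|\le H\Lambda$ on $B_{Mr/2}(x_{0})$ to bound the right-hand side. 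Campanato's embedding then delivers
\[
\sup_{x\in B_{r}}|A(Dv_{*}(x))-(A(Dv_{*}))_{B_{r/M}}| \;\le\; cM^{-\beta_{v}}\Lambda^{p-1}+cR_{1}^{\tau}\Lambda^{p-1}
\]
for some $\tau>0$. Combining the three estimates gives, for every $x\in B_{r}$,
\[
|A(Dv_{*}(x))| \;\ge\; \bigl(c^{-1}-c\delta-c\delta^{-(p-2)}M^{2n}\theta_{*}-cM^{-\beta_{v}}-cR_{1}^{\tau}\bigr)\Lambda^{p-1}.
\]
Fixing $M$ large, then $\delta$ small, then $\theta_{*}$ small (depending on $\delta,M$), then $R_{1}$ small, so that the parenthesis exceeds $(2c)^{-1}$, and inverting $A$ via Lemma~\ref{lem:mono} yields $|Dv_{*}|\ge\Lambda/H$ on $B_{r}$. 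The argument for $|Dv|$ on $B_{r/M}$ is identical, with outer ball $B_{r/4}(x)$ in place of $B_{Mr/4}(x)$ for $x\in B_{r/M}$; the Campanato decay factor is again $M^{-\beta_{v}}$.

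\emph{Main obstacle.} The delicate point is the rigid order of parameter choices: $M$ must be chosen first so that the Campanato decay $M^{-\beta_{v}}$ is small, but this triggers a $M^{2n}$ blow-up in the comparison between $A(Du)$ and $A(Dv_{*})$ that can only be absorbed after a suitably small $\theta_{*}$ (depending on $M$) is fixed. The exponents in \eqref{nondeg-small-mu} are precisely tuned so that the tail contributions appearing in \eqref{homo-mixed-lip} and in Proposition~\ref{prop:homo-mixed-hol2} are absorbable uniformly for $Mr\le 1$.
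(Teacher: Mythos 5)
Your proof is correct and follows essentially the same strategy as the paper: the Lipschitz bound \eqref{homo-mixed-lip} for the upper estimate, then a comparison of averages together with a Campanato oscillation estimate derived from Proposition~\ref{prop:homo-mixed-hol2} for the lower estimate, with the same parameter order ($M$ large, then $\theta_*$ small, then $R_1$ small). The minor stylistic differences---introducing a Young parameter $\delta$, and lower-bounding $|(A(Du))_{B_{r/M}}|$ via the non-degenerate alternative where the paper instead works with $(|A(Du)|)_{B_{r/M}}$ (which is automatically comparable to $\Lambda^{p-1}$ from \eqref{Lambda}) and then applies a pigeonhole argument to locate $\bar x_*$---do not change the substance.
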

\begin{proof}
To show \eqref{Dv-low-up-1}, we first give the bound for the tail. Lemma~\ref{lem:mix-comp2} and \eqref{nondeg-small-mu} imply
\begin{align*}
\lefteqn{ (Mr)^{1 - (p-1) \ve_{1}} \left[ \frac{1}{(Mr)^{p'}} \tail(u-(v_{*})_{B_{Mr}}; Mr) \right]^{p-1} } \\
& \leq c (Mr)^{1 - (p-1) \ve_{1}} \left[ \left[ \frac{1}{(Mr)^{p'}} \tail(u-(u)_{B_{Mr}}; Mr) \right]^{p-1}  + |(u-v_{*})_{B_{Mr}}|^{p-1} \int_{\rn \setminus B_{Mr}} \frac{1}{|x-x_{0}|^{n+sp}} \dx \right] \notag \\
& \leq c (Mr)^{1 - (p-1) \ve_{1}} \left[ \frac{1}{(Mr)^{p'}} \tail(u-(u)_{B_{Mr}}; Mr) \right]^{p-1} + c(Mr)^{(1-s)p - (p-1)\ve_{1}} \left(\mean{B_{Mr}}|Du-Dv_{*}| \dx \right)^{p-1} \nonumber \\
& \le c\left[(Mr)^{1-(p-1)\varepsilon_{1}} + (Mr)^{(1-s)p+(p-1)\varepsilon_{1}} \right]\theta_{*}\Lambda^{p-1}\,.
\end{align*}
Choosing $R_{1} = R_{1}(\data,\varepsilon,M)$ so small that 
\begin{equation}\label{R1.cond1}
MR_{1} \le \widetilde{R}_{0} \le 1\,,
\end{equation}
we have
\begin{align}\label{Dv-low-up-3}
(Mr)^{1- (p-1) \ve_{1}} \left[ \frac{1}{(Mr)^{p'}} \tail(u-(v_{*})_{B_{Mr}}; Mr) \right]^{p-1}
\leq c \theta_{*} \Lambda^{p-1}\,.
\end{align}
    
To obtain the upper bound of $|Dv_{*}|$ in \eqref{Dv-low-up-1}, we apply \eqref{homo-mixed-lip} to see
\begin{align}\label{Dv-low-up-4}
\sup_{B_{Mr/2}} |Dv_{*}|^{p-1}
& \leq c \mean{B_{Mr}} |Dv_{*}|^{p-1} \dx + c (Mr)^{1 - (p-1) \ve_{1}} \left[ \frac{1}{(Mr)^{p'}} \tail(v_{*}-(v_{*})_{B_{Mr}}; Mr) \right]^{p-1} \notag \\
& \leq c \mean{B_{Mr}} |Du|^{p-1} \dx + c \mean{B_{Mr}}|Du-Dv_{*}|^{p-1} \dx \notag \\
& \qquad + c (Mr)^{1 - (p-1) \ve_{1}} \left[ \frac{1}{(Mr)^{p'}} \tail(u-(v_{*})_{B_{Mr}}; Mr) \right]^{p-1}\,.
\end{align}
Combining \eqref{Dv-low-up-3} and \eqref{Dv-low-up-4}, and then using  Lemma~\ref{lem:mix-comp2}, \eqref{nondeg-Dubdd}, and \eqref{nondeg-small-mu}, we obtain the upper bound in \eqref{Dv-low-up-1}:
\begin{equation}\label{Dv*.upper}
\sup_{B_{Mr/2}} |Dv_{*}| \leq c \Lambda\,.
\end{equation}
    
As for the lower bound of $|Dv_{*}|$ in \eqref{Dv-low-up-1}, recalling \eqref{Lambda}, we choose $c_{u} = c_{u}(\data)$ such that
\begin{equation*} 
\frac{\Lambda^{p-1}}{c_{u}} \le (|A(Du)|)_{B_{r/M}} \le c_{u}\Lambda^{p-1}\,.
\end{equation*}
Then we have
\begin{align*}
(|A(Dv_{*})|)_{B_{r/M}} & \ge (|A(Du)|)_{B_{r/M}} - \left|(|A(Dv_{*})|)_{B_{r/M}}-(|A(Du)|)_{B_{r/M}}\right| \\
& = (|A(Du)|)_{B_{r/M}} - \left|\mean{B_{r/M}}(|A(Dv_{*})|-|A(Du)|)\dx\right| \\
& \ge \frac{\Lambda^{p-1}}{c_{u}} - \mean{B_{r/M}}|A(Du)-A(Dv_{*})|\dx\,,
\end{align*}
where the last integral is estimated as
\begin{align*}
\mean{B_{r/M}}|A(Du)-A(Dv_{*})|\dx 
& \le cM^{2n}\mean{B_{Mr}}|Du-Dv_{*}|^{p-1}\dx + cM^{2n}\Lambda^{p-2}\mean{B_{Mr}}|Du-Dv_{*}|\dx \\
& \le cM^{2n}\left[\frac{|\mu|(B_{Mr})}{(Mr)^{n-1}}\right] + cM^{2n}\Lambda^{p-2}\left[\frac{|\mu|(B_{Mr})}{(Mr)^{n-1}}\right]^{1/(p-1)} \\
& \le \tilde{c}_{*}M^{2n}\left[\theta_{*} + \theta_{*}^{1/(p-1)}\right]\Lambda^{p-1}\,
\end{align*}
with $\tilde{c}_{*} = \tilde{c}_{*}(\data)$. 
Choosing $\theta_{*} = \theta_{*}(\data,M)$ such that
\begin{equation}\label{theta*.cond}
\tilde{c}_{*}M^{2n}\left[ \theta_{*} + \theta_{*}^{1/(p-1)} \right] \le \frac{1}{2c_{u}}\,,
\end{equation}
we have
\begin{equation*}
\mean{B_{r/M}}|A(Du)-A(Dv_{*})|\dx \le \frac{\Lambda^{p-1}}{2c_{u}}
\end{equation*}
and therefore
\begin{equation*}
(|A(Dv_{*})|)_{B_{r/M}} \ge \frac{\Lambda^{p-1}}{2c_{u}}\,.
\end{equation*}
In particular, there exists $\bar{x}_{*} \in B_{r/M}$ such that
\begin{equation*}
|A(Dv_{*}(\bar{x}_{*}))| \ge \frac{\Lambda^{p-1}}{2c_{u}}\,. 
\end{equation*}
In light of Proposition~\ref{prop:homo-mixed-hol2} and \eqref{R1.cond1}, we have
\begin{align*}
\osc_{B_{r}} A(Dv_{*})
& \leq \frac{c}{M^{\beta_{v}}} \Bigg[ \mean{B_{Mr}} |A(Dv_{*})-(A(Dv_{*}))_{B_{Mr}}| \dx + (Mr)^{\ve_{1}}\mean{B_{Mr}} |Dv_{*}|^{p-1} \dx  \notag \\
& \qquad\qquad\qquad + (Mr)^{1-(p-1)\ve_{1}} \left[ \frac{1}{(Mr)^{p'}} \tail(v_{*}-(v_{*})_{B_{Mr}}; Mr) \right]^{p-1} \Bigg] \notag  \\
& \leq \frac{c}{M^{\beta_{v}}} \Bigg[ \mean{B_{Mr}} |Du - Dv_{*}|^{p-1} \dx  + \mean{B_{Mr}} |Du|^{p-1} \dx  \notag\\
& \qquad\qquad\qquad + (Mr)^{1-(p-1)\ve_{1}} \left[ \frac{1}{(Mr)^{p'}} \tail(u-(v_{*})_{B_{Mr}}; Mr) \right] ^{p-1} \Bigg] \notag \\
& \leq \frac{c_{*}}{M^{\beta_{v}}} \Lambda^{p-1}
\end{align*}
for a constant $c_{*}=c_{*}(\data,\varepsilon_{1})$. 
Choosing $M = M(\data,\varepsilon_{1})$ such that
\begin{equation}\label{M.cond1}
\frac{c_{*}}{M^{\beta_{v}}} \le \frac{1}{4c_{u}}\,, 
\end{equation}
we obtain the lower bound in \eqref{Dv-low-up-1}:
\begin{equation*}
|A(Dv_{*}(x))| \ge |A(Dv_{*}(\bar{x}_{*}))| - \osc_{B_{r}}A(Dv_{*}) \ge \frac{\Lambda^{p-1}}{4c_{u}} \qquad \forall\; x\in B_{r}\,.
\end{equation*}
    
Now we show \eqref{Dv-low-up-2}. 
For this, we choose $R_{1} = R_{1}(\data,\varepsilon_{1},M) \le \tilde{R}_{0}$ such that
\begin{equation}\label{R1.cond2}
M^{n+p-1}R_{1}^{(1-s)p + (p-1)\varepsilon_{1}} \le 1\,.
\end{equation} 
Using \eqref{nondeg-Dubdd}, \eqref{nondeg-small-mu}, \eqref{R1.cond2} and Lemma~\ref{lem:mix-comp2}, we have
\begin{align}\label{Dv-low-up-tail}
\lefteqn{ r^{1 - (p-1) \ve_{1}} \left[ \frac{1}{r^{p'}} \tail(u-(v)_{B_{r}}; r) \right]^{p-1} } \notag \\
& \leq c r^{1 - (p-1) \ve_{1}} \left[ \frac{1}{r^{p'}} \tail(u-(u)_{B_{r}}; r) \right]^{p-1} + c r^{(1-s)p - (p-1) \ve_{1}} \mean{B_{r}}|Du-Dv|^{p-1} \dx  \notag\\
& \leq c M^{n+p -1} r^{(1 - s) p - (p-1) \ve_{1}} \mean{B_{Mr}} |Du|^{p-1} \dx \notag \\
& \quad + c (Mr)^{1 - (p-1) \ve_{1}} \left[ \frac{1}{(Mr)^{p'}} \tail(u-(u)_{B_{Mr}};Mr) \right]^{p-1} + c M^{n-1}r^{(1-s)p-(p-1)\varepsilon_{1}} \left[ \frac{|\mu|(B_{Mr})}{(Mr)^{n-1}} \right] \notag \\
& \leq  c\Lambda^{p-1}\,.
\end{align}
Similarly as in \eqref{Dv*.upper}, we use Lemma~\ref{lem:mix-comp2}, together with  \eqref{homo-mixed-lip}, \eqref{nondeg-Dubdd}, \eqref{nondeg-small-mu}, and \eqref{Dv-low-up-tail}, to obtain the upper bound
\begin{align*} 
\sup_{B_{r/2}} |Dv|^{p-1}
& \leq c \mean{B_{r}} |Du|^{p-1} \dx + c \mean{B_{r}}|Du-Dv|^{p-1} \dx + c r^{1 - (p-1) \ve_{1}} \left[ \frac{1}{r^{p'}} \tail(u-(v)_{B_{r}}; r) \right]^{p-1} \notag \\
& \leq c \left( 1 + M^{n-1}\theta_{*} \right) \Lambda^{p-1} 
\overset{\eqref{theta*.cond}}{\le} c\Lambda^{p-1}\,.
\end{align*}
Now we turn our attention to the lower bound in \eqref{Dv-low-up-2}. By completely similar calculations as above, we have
\begin{equation*}
(|A(Dv)|)_{B_{r/M}} \ge \frac{\Lambda^{p-1}}{c_{u}} - \mean{B_{r/M}}|A(Du)-A(Dv)|\dx\,,
\end{equation*}    
and this time we estimate
\begin{align*}
\mean{B_{r/M}}|A(Du)-A(Dv)|\dx 
& \le cM^{n}\mean{B_{r}}|Du-Dv|^{p-1}\dx + cM^{n}\Lambda^{p-2}\mean{B_{r}}|Du-Dv|\dx \\
& \le cM^{2n-1}\left[\frac{|\mu|(B_{Mr}}{(Mr)^{n-1}}\right] + cM^{(np-1)/(p-1)}\Lambda^{p-2}\left[\frac{|\mu|(B_{Mr})}{(Mr)^{n-1}}\right]^{1/(p-1)} \\
& \le \tilde{c}\left[M^{2n-1}\theta_{*} + \left(M^{np-1}\theta_{*}\right)^{1/(p-1)}\right]\Lambda^{p-1}\,
\end{align*}
for a constant $\tilde{c} = \tilde{c}(\data)$. Choosing $\theta_{*}=\theta_{*}(\data,M)$ so small that
\begin{equation}\label{theta*.cond2}
\tilde{c}\left[M^{2n-1}\theta_{*} + \left(M^{np-1}\theta_{*}\right)^{1/(p-1)}\right] \le \frac{1}{2c_{u}}\,, 
\end{equation}
we have
\begin{equation*}
(|A(Dv)|)_{B_{r/M}} \ge \frac{\Lambda^{p-1}}{2c_{u}}\,.
\end{equation*}
In turn, there exists $\bar{x} \in B_{r/M}$ such that
\begin{equation*}
|A(Dv(\bar{x}))| \ge \frac{\Lambda^{p-1}}{2c_{u}}\,.
\end{equation*}
We use Proposition~\ref{prop:homo-mixed-hol2} and Lemma~\ref{lem:mix-comp2}. Then \eqref{nondeg-small-mu}, \eqref{Dv-low-up-tail} and \eqref{R1.cond2} imply
\begin{align*}
\osc_{B_{r/M}} A(Dv)
& \leq \frac{c}{M^{\beta_{v}}} \Bigg[ \mean{B_{r}} |A(Dv) - (A(Dv))_{B_{r}}| \dx + r^{\ve_{1}} \mean{B_{r}} |Dv|^{p-1} \dx  \notag \\
& \qquad \qquad \qquad  + r^{1-(p-1)\ve_{1}} \left[ \frac{1}{r^{p'}} \tail(v-(v)_{B_{r}}; r) \right]^{p-1} \Bigg] \notag \\
& \leq \frac{c}{M^{\beta_{v}}} \Bigg[ \mean{B_{r}} |Du - Dv|^{p-1} \dx  +  \mean{B_{r}} |Du|^{p-1} \dx  \notag \\
& \qquad \qquad \qquad + r^{1 -(p-1)\ve_{1}} \left[ \frac{1}{r^{p'}} \tail(u-(v)_{B_{r}}; r) \right]^{p-1} \Bigg] \notag \\
& \leq \frac{c}{M^{\beta_{v}}} \left[ M^{n-1}\theta_{*} + 1 \right]\Lambda
\overset{\eqref{theta*.cond2}}{\le} \frac{c}{M^{\beta_{v}}}\Lambda^{p-1}
\end{align*}
whenever $r \le R_{1}$, where $c = c(\data,\varepsilon_{1})$. 
Choosing $M = M(\data,\varepsilon_{1})$ such that
\begin{equation}\label{M.cond2}
\frac{c}{M^{\beta_{v}}} \le \frac{1}{4c_{u}}\,,
\end{equation}
we conclude with the lower bound in \eqref{Dv-low-up-2}.
\end{proof}

\begin{rem}\rm
The procedure of fixing the parameters $M,\theta,\theta_{*}$ and $R_{1}$ is summarized as follows. 
We first choose $M = M(\data,\varepsilon_{1}) \ge 8$ as in Lemma~\ref{lem:Dv-low-up} such that \eqref{M.cond1} and \eqref{M.cond2} hold. Then we accordingly fix $\theta = \theta(\data,\varepsilon_{1})$ as in \eqref{theta-choice}.
Finally, we determine $\theta_{*}=\theta_{*}(\data,\varepsilon_{1})$ satisfying \eqref{theta*.cond}, \eqref{theta*.cond2} and $R_{1}=R_{1}(\data,\varepsilon_{1})$ satisfying \eqref{R1.cond1}, \eqref{R1.cond2}. As a result, we have chosen the values of $M,\theta,\theta_{*}$ and $R_{1}$ depending only on $\data$ and $\ve_{1}$, and we will use these values of the parameters in the rest of the paper.
\end{rem}

\begin{lem}\label{lem:nondeg-comp} 
Let $u$ and $v$ be the weak solutions to \eqref{eq:main} and \eqref{eq:homo-mixed-2},  respectively, under assumptions \eqref{growth}--\eqref{kernel.growth} with 
$p \geq 2$, $s\in(0,1)$ and \eqref{regular-data}, and let $\ve_{1} \in (0, (1-s)/p)$. 
Assume that {$B_{Mr}\subset\Omega$, while} the constants $R_{1}$, $M$, $\theta$, and $\theta_{*}$, depending on $\ve_1$ and $\data$ only, are chosen as in Lemma~\ref{lem:Dv-low-up} and \eqref{theta-choice} under the assumptions \eqref{nondeg-alt} and \eqref{nondeg-small-mu}. 
Then there exists a constant $c=c(\data,\ve_{1})$ such that
\begin{align*}
\mean{B_{r/M}}|A(Du)-A(Dv)| \dx \leq c \left[ \frac{|\mu|(B_{Mr})}{(Mr)^{n-1}} \right]\,.
\end{align*} 
\end{lem}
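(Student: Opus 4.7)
The plan is to improve on the crude bound $|A(Du)-A(Dv)|\lesssim (|Du|+|Dv|)^{p-2}|Du-Dv|$ by exploiting the pointwise two-sided control on $|Dv|$ in $B_{r/M}$ granted by Lemma~\ref{lem:Dv-low-up}, together with the enhanced $L^{1}$-comparison of Lemma~\ref{lem:mix-comp3}. Since $p\geq 2$, a triangle-inequality splitting of $(|z_{1}|+|z_{2}|)^{p-2}$ in Lemma~\ref{lem:mono} yields
\[
|A(Du)-A(Dv)| \leq c\,|Dv|^{p-2}|Du-Dv| + c\,|Du-Dv|^{p-1}\,.
\]
The upper bound in \eqref{Dv-low-up-2} gives $|Dv|^{p-2}\leq c\Lambda^{p-2}$ on $B_{r/M}$, so averaging produces
\[
\mean{B_{r/M}}|A(Du)-A(Dv)|\dx \leq c\Lambda^{p-2}\mean{B_{r/M}}|Du-Dv|\dx + c\mean{B_{r/M}}|Du-Dv|^{p-1}\dx\,.
\]

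For the first integral on the right I would invoke Lemma~\ref{lem:mix-comp3}: its measure hypothesis is met by \eqref{nondeg-small-mu} (with any $H\geq\theta_{*}$) and the required pointwise two-sided bound on $|Dv_{*}|$ in $B_{r}$ is supplied by \eqref{Dv-low-up-1}, so that
\[
\mean{B_{r}}|Du-Dv|\dx \leq c\Lambda^{2-p}\,\frac{|\mu|(B_{Mr})}{(Mr)^{n-1}}\,.
\]
Enlarging the averaging domain from $B_{r/M}$ to $B_{r}$ costs only a factor $M^{n}$, and multiplying by $\Lambda^{p-2}$ gives the crucial cancellation $\Lambda^{p-2}\cdot\Lambda^{2-p}=1$. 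For the second integral, Lemma~\ref{lem:mix-comp2} with $q=p-1$ (admissible since $p\geq 2$, and the $\chi_{\{p<2\}}$ summand vanishes) yields $\mean{B_{r}}|Du-Dv|^{p-1}\dx\leq c\,|\mu|(B_{r})/r^{n-1}$; expanding $B_{r/M}\subset B_{r}$ and bounding $|\mu|(B_{r})/r^{n-1}\leq M^{n-1}|\mu|(B_{Mr})/(Mr)^{n-1}$ closes the estimate, with the final constant depending only on $\data$ and $\ve_{1}$ since $M=M(\data,\ve_{1})$.

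The subtle point is handling the $\Lambda^{p-2}$ multiplier in front of the first average. Relying only on the weaker $L^{1}$-comparison of Lemma~\ref{lem:mix-comp2} (taking $q=1$) would leave a residual power $[|\mu|(B_{r})/r^{n-1}]^{1/(p-1)}$ that cannot be absorbed into $|\mu|(B_{Mr})/(Mr)^{n-1}$ under the small-measure regime \eqref{nondeg-small-mu}. It is precisely the improved factor $\Lambda^{2-p}$ furnished by Lemma~\ref{lem:mix-comp3}, whose applicability is made legitimate by the careful pointwise bounds of Lemma~\ref{lem:Dv-low-up}, that cancels the $\Lambda^{p-2}$ and produces the genuinely linear dependence on $|\mu|(B_{Mr})/(Mr)^{n-1}$ claimed in the statement.
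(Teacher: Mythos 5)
Your proposal is correct and follows essentially the same route as the paper: split $|A(Du)-A(Dv)|\lesssim |Dv|^{p-2}|Du-Dv|+|Du-Dv|^{p-1}$ via Lemma~\ref{lem:mono} and $p\geq 2$, bound $|Dv|^{p-2}\lesssim\Lambda^{p-2}$ using \eqref{Dv-low-up-2}, then apply Lemma~\ref{lem:mix-comp3} (enabled by \eqref{Dv-low-up-1} and \eqref{nondeg-small-mu}) to cancel $\Lambda^{p-2}\cdot\Lambda^{2-p}$, and Lemma~\ref{lem:mix-comp2} with $q=p-1$ for the second term. You have simply spelled out the applicability checks and domain-enlargement steps that the paper's terse proof leaves implicit.
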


\begin{proof}
Using Lemma~\ref{lem:mono} and \eqref{Dv-low-up-2}, we infer that
\begin{align*}
\mean{B_{r/M}} |A(Du) - A(Dv)| \dx
& \leq c \mean{B_{r/M}} |Dv|^{p-2}|Du - Dv| \dx + c \mean{B_{r/M}} |Du - Dv|^{p-1} \dx \\
& \leq c\Lambda^{p-2}\mean{B_{r/M}}|Du-Dv|\dx + c\mean{B_{r/M}}|Du-Dv|^{p-1}\dx.
\end{align*}
Note that we also have \eqref{Dv-low-up-1}. Thus, we can estimate the first and second integrals on the right-hand side by using Lemmas~\ref{lem:mix-comp3} and \ref{lem:mix-comp2}, respectively, which leads to the conclusion.
\end{proof}

\subsection{Excess decay estimates for for $2-1/n<p<\infty$}

Note that when $p \geq 2$, all the constants $M$, $\theta$, $\theta_{*}$, and $R_{1}$ were chosen depending only on $\data$ and $\ve_{1} \in (0, (1-s)/p)$ in Section~\ref{sec:alternatives}.

Summing up Lemmas~\ref{lem:deg-comp}, \ref{lem:nondeg-large-mu}, and \ref{lem:nondeg-comp}, we have the following comparison lemma.
\begin{lem}\label{lem:A-comp}
Let $u$ and $v$ be the weak solutions to \eqref{eq:main} and \eqref{eq:homo-mixed-2}, respectively, under assumptions \eqref{growth}--\eqref{kernel.growth} with 
$p \geq 2$, $s\in(0,1)$ and\eqref{regular-data}. 
Assume that  $\ve_{1} \in (0, (1-s)/p)$ and the constants $R_{1}$ and $M$ are chosen as in Lemma~\ref{lem:Dv-low-up} {and $B_{Mr}\subset\Omega$}. 
Then there exists $c=c(\data, \ve_{1})$ such that for any $\ve_{2} \in (0,1)$ we have
\begin{align*} 
\mean{B_{r/M}} |A(Du) - A(Dv)| \dx
& \leq c \ve_{2} \mean{B_{Mr}} |A(Du) - (A(Du))_{B_{Mr}}| \dx + c \ve_{2}^{2-p} \left[ \frac{|\mu|(B_{Mr})}{(Mr)^{n-1}} \right] \notag \\
& \quad + c (Mr)^{1 - (p-1)\ve_{1}} \left[ \frac{1}{(Mr)^{p'}} \tail(u-(u)_{B_{Mr}};Mr) \right]^{p-1}\,.
\end{align*} 
\end{lem}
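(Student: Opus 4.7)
My plan is to carry out a trichotomy dictated by the two-scale alternative scheme set up in Section~\ref{sec:alternatives}, and in each branch to simply invoke the matching comparison estimate already proved. With $M$, $\theta$, $\theta_{*}$, $R_{1}$ fixed once and for all as in Lemma~\ref{lem:Dv-low-up} and \eqref{theta-choice}, so that they depend only on $\data$ and $\ve_{1}$, the three mutually exclusive situations are: (a) the degenerate alternative \eqref{deg-alt}; (b) the non-degenerate alternative \eqref{nondeg-alt} combined with the ``large measure'' subcase \eqref{nondeg-large-mu}; (c) the non-degenerate alternative \eqref{nondeg-alt} combined with the ``small measure'' subcase \eqref{nondeg-small-mu}. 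It suffices to check that in each branch the right-hand side of the target inequality already dominates the bound produced.

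In branch (a) I apply Lemma~\ref{lem:deg-comp} with the given $\ve_{2}$. Its conclusion is stated as an integral average over $B_{r}$, so I pass down to $B_{r/M}$ by enlarging the integration domain at the cost of a factor $M^{n}$ which is harmless since $M$ depends only on $\data$ and $\ve_{1}$; the constants involving $\theta$ and $M$ get absorbed into $c$. This directly yields the first two terms on the right-hand side of the statement, with no contribution from the tail.

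In branch (b), Lemma~\ref{lem:nondeg-large-mu} supplies a bound by the measure term plus the tail term, with prefactor $1/\theta_{*}$ that is again absorbed into $c$. To match the form in the statement, I observe that for $p\ge 2$ and $\ve_{2}\in(0,1)$ one has $\ve_{2}^{2-p}\ge 1$, so the plain measure term is trivially dominated by $c\,\ve_{2}^{2-p}$ times it. In branch (c), all the hypotheses of Lemma~\ref{lem:Dv-low-up} are in force, so the enhanced pointwise comparison of Lemma~\ref{lem:nondeg-comp} applies and delivers a bound by the measure term alone, which is again dominated by the right-hand side of the claim as above.

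Taking the maximum over the three branches and relabeling constants yields the desired estimate. I do not anticipate any real obstacle at this stage: all the heavy lifting—the intrinsic linearization, the two-sided bounds \eqref{Dv-low-up-1}--\eqref{Dv-low-up-2} on $|Dv_{*}|$ and $|Dv|$ in the non-degenerate regime, and the calibration of $M$, $\theta$, $\theta_{*}$, $R_{1}$—has already been done in Lemmas~\ref{lem:deg-comp}--\ref{lem:nondeg-comp}, so the present lemma is the final packaging of the alternative analysis into a single clean excess-type comparison for the $A(\cdot)$-map.
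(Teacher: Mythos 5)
Your proposal is correct and takes essentially the same approach as the paper, which introduces the lemma with the phrase ``Summing up Lemmas~\ref{lem:deg-comp}, \ref{lem:nondeg-large-mu}, and \ref{lem:nondeg-comp}.'' Your case distinction into \eqref{deg-alt}, \eqref{nondeg-alt}\&\eqref{nondeg-large-mu}, and \eqref{nondeg-alt}\&\eqref{nondeg-small-mu}, together with the change of integration domain from $B_r$ to $B_{r/M}$ in branch (a) (costing a harmless $M^n$), and the observation $\ve_2^{2-p}\geq 1$ for $p\geq 2$, $\ve_2\in(0,1)$ to unify the measure terms, is exactly the intended bookkeeping; all of $M,\theta,\theta_*,R_1$ depend only on $\data$ and $\ve_1$ as fixed earlier, so the resulting constant has the claimed dependence.
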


Now we are ready to prove the excess decay estimate for $A(Du)$ when $p \geq 2$.
\begin{lem}\label{lem:ADuexcess-pge2}
Under the assumptions in Lemma~\ref{lem:A-comp}, for any $\rho \in (0, Mr]$, we have
\begin{align}\label{ADuexcess-pge2}
\mean{B_{\rho}}|A(Du) - (A(Du))_{B_{\rho}}| \dx 
& \leq c_{4} \left( \frac{\rho}{Mr} \right)^{\beta_{v}} \left[\mean{B_{M r}} |A(Du) - (A(Du))_{B_{Mr}}| \dx + (Mr)^{\ve_{1}} \mean{B_{Mr}} |A(Du)| \dx \right] \notag \\
& \quad + c_{5}\left(\frac{Mr}{\rho}\right)^{n}(Mr)^{1-(p-1)\ve_{1}}\left[ \frac{1}{(Mr)^{p'}} \tail(u-(u)_{B_{Mr}}; Mr) \right]^{p-1}  \nonumber \\
& \quad + c_{5} \left( \frac{Mr}{\rho} \right)^{\eta} \left[ \frac{|\mu|(B_{Mr})}{(Mr)^{n-1}} \right] 
\end{align} 
for some $c_{4}$, $c_{5}$ and $\eta$ depending only on $\data$ and $\ve_{1}$.
\end{lem}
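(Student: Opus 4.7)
The plan is to split the range of $\rho$ into two subcases and, in the harder one, to combine the comparison estimate of Lemma~\ref{lem:A-comp} with the intrinsic excess decay for $A(Dv)$ provided by Proposition~\ref{prop:homo-mixed-hol2}, where $v$ is the comparison map defined on $B_{r}$ by \eqref{eq:homo-mixed-2}. In the easy range $r/M\le\rho\le Mr$, enlarging the averaging ball gives
\[ \mean{B_\rho}|A(Du)-(A(Du))_{B_\rho}|\dx \le 2\Big(\tfrac{Mr}{\rho}\Big)^n \mean{B_{Mr}}|A(Du)-(A(Du))_{B_{Mr}}|\dx\,, \]
and since $Mr/\rho\le M^{2}$ while $(\rho/Mr)^{\beta_v}\ge M^{-2\beta_v}$, this is absorbed into the first summand of \eqref{ADuexcess-pge2} by taking $c_4$ large.

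For $0<\rho<r/M$, I would split
\begin{align*}
\mean{B_\rho}|A(Du)-(A(Du))_{B_\rho}|\dx
& \le 2\mean{B_\rho}|A(Du)-A(Dv)|\dx + 2\mean{B_\rho}|A(Dv)-(A(Dv))_{B_\rho}|\dx \\
& \eqqcolon (\mathbf{I})+(\mathbf{II})\,.
\end{align*}
For $(\mathbf{I})$, enlarging the integration set to $B_{r/M}$ by positivity and invoking Lemma~\ref{lem:A-comp} with a free $\varepsilon_2\in(0,1)$ produces on the right-hand side the excess on $B_{Mr}$ with coefficient $(r/(M\rho))^n\varepsilon_2$, the measure term $|\mu|(B_{Mr})/(Mr)^{n-1}$ with coefficient $(r/(M\rho))^n\varepsilon_2^{2-p}$, and the $u$-tail with coefficient $(r/(M\rho))^n$. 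The critical calibration is $\varepsilon_2\coloneqq M^{n-\beta_v}(\rho/r)^{\beta_v+n}$, which lies in $(0,1)$ since $\rho\le r/M$ and $M\ge 8$; a direct computation then shows that the first coefficient becomes a constant multiple of $(\rho/Mr)^{\beta_v}$, the measure coefficient reduces to $c(Mr/\rho)^{\eta}$ with $\eta\coloneqq n(p-1)+\beta_v(p-2)$, and the tail coefficient is dominated by $(Mr/\rho)^n$, matching precisely the three summands of \eqref{ADuexcess-pge2}.

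For $(\mathbf{II})$, since $v$ solves the homogeneous mixed equation on $B_r$, $B_{r/M}\Subset B_r$ and $r/M<\widetilde R_0$, Proposition~\ref{prop:homo-mixed-hol2} applied to $v$ with outer ball $B_{r/M}$ yields
\begin{align*}
(\mathbf{II})
& \le c\Big(\tfrac{M\rho}{r}\Big)^{\beta_v}\bigg[\mean{B_{r/M}}|A(Dv)-(A(Dv))_{B_{r/M}}|\dx + (r/M)^{\varepsilon_1}\mean{B_{r/M}}|Dv|^{p-1}\dx \\
& \qquad\qquad\qquad + (r/M)^{1-(p-1)\varepsilon_1}\big[\tfrac{1}{(r/M)^{p'}}\tail(v-(v)_{B_{r/M}};r/M)\big]^{p-1}\bigg]\,.
\end{align*}
Each sub-term is then reduced to a $u$-quantity. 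The excess of $A(Dv)$ on $B_{r/M}$ is estimated through the intermediate mean value $(A(Du))_{B_{Mr}}$, producing $cM^{2n}\mean{B_{Mr}}|A(Du)-(A(Du))_{B_{Mr}}|\dx$ plus a comparison error bounded via Lemma~\ref{lem:A-comp} with a constant choice of $\varepsilon_2$. The $|Dv|^{p-1}$-term is split as $|Dv|^{p-1}\le c|Du|^{p-1}+c|Du-Dv|^{p-1}$, the first summand giving $cM^{2n}\mean{B_{Mr}}|A(Du)|\dx$ (via $|Du|^{p-1}\approx|A(Du)|$ for $p\ge 2$) and the second contributing a $|\mu|(B_{Mr})/(Mr)^{n-1}$ piece through Lemma~\ref{lem:mix-comp2} with $q=p-1$. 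The $v$-tail is split as $\rn\setminus B_{r/M}=(\rn\setminus B_{Mr})\cup(B_{Mr}\setminus B_{r/M})$, with $v\equiv u$ on the outer piece and Lemma~\ref{lem:bdd} combined with Poincar\'e's inequality on the annulus giving control by the $u$-tail over $B_{Mr}$ and the already treated terms. Collecting all these contributions and absorbing the $M$-dependent factors into $c_4,c_5$ (which hence depend only on $\data$ and $\varepsilon_1$, as $M,\theta,\theta_*,R_1$ do) yields \eqref{ADuexcess-pge2}.

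The main technical obstacle is the calibration of $\varepsilon_2$: it must decay as a fixed negative power of $Mr/\rho$ matching the $(\rho/Mr)^{\beta_v}$ gain from $(\mathbf{II})$, while the compensating $\varepsilon_2^{2-p}$ blow-up against the measure term is precisely what dictates the non-sharp exponent $\eta=n(p-1)+\beta_v(p-2)$. A secondary bookkeeping difficulty is reducing the $v$-tail over $B_{r/M}$ to the $u$-tail over $B_{Mr}$ without losing powers of the radii, which relies essentially on the boundary condition $v\equiv u$ outside $B_r$ and on Lemma~\ref{lem:bdd}.
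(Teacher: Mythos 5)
Your proposal is correct and follows essentially the same route as the paper: split the excess of $A(Du)$ on $B_\rho$ into the excess of $A(Dv)$ (handled by Proposition~\ref{prop:homo-mixed-hol2} between $B_\rho$ and $B_{r/M}$) and a comparison term (handled by Lemma~\ref{lem:A-comp}), converting $v$-quantities to $u$-quantities along the way, with the exponent $\eta=n(p-1)+\beta_v(p-2)$ emerging from the same calibration $\varepsilon_2\approx(\rho/Mr)^{n+\beta_v}$. The only organizational difference is that the paper funnels all the accumulated comparison errors into a single $\mean{B_{r/M}}|A(Du)-A(Dv)|\dx$ term and invokes Lemma~\ref{lem:A-comp} once at the very end, whereas you invoke it twice (once with calibrated $\varepsilon_2$, once with constant $\varepsilon_2$); both work. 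One small inaccuracy: for the $v$-tail reduction in $(\mathbf{II})$, invoking Lemma~\ref{lem:bdd} on the annulus $B_{Mr}\setminus B_{r/M}$ is not quite right, since $v$ solves the PDE only on $B_r$ and coincides with $u$ only outside $B_r$; the cleaner (and what the paper effectively does) route is to write $|v-(v)_{B_{r/M}}|\le|u-(v)_{B_{r/M}}|+|u-v|$, integrate, and estimate the $|u-v|$ contribution via Poincar\'e and Lemma~\ref{lem:mix-comp2} rather than via the sup-bound of Lemma~\ref{lem:bdd}.
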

\begin{proof}
If $\rho \in (r/M, Mr]$, then \eqref{ADuexcess-pge2} follows directly from the fact that $M$ depends only on $\data$ and $\ve_{1}$. Hence, we assume $\rho \in (0, r/M]$.
Applying Proposition~\ref{prop:homo-mixed-hol2} with concentric balls $B_{\rho} \subset B_{r/M}$, we have
\begin{align*}
\lefteqn{ \mean{B_{\rho}}|A(Du)-(A(Du))_{B_{\rho}}|\dx } \\
& \le c\mean{B_{\rho}}|A(Dv)-(A(Dv))_{B_{\rho}}|\dx + c\mean{B_{\rho}}|A(Du)-A(Dv)|\dx \\
& \le c\left(\frac{M\rho}{r}\right)^{\beta_{v}}\Bigg[ \mean{B_{r/M}}|A(Dv)-(A(Dv))_{B_{r/M}}|\dx + \left(\frac{r}{M}\right)^{\ve_{1}}\mean{B_{r/M}}|Dv|^{p-1}\dx \\
& \qquad\qquad\qquad\qquad + \left(\frac{r}{M}\right)^{1-(p-1)\ve_{1}}\left[\left(\frac{M}{r}\right)^{p'}\tail(v-(v)_{B_{r/M}};r/M)\right]^{p-1} \Bigg] \\
& \quad + c\left(\frac{r}{M\rho}\right)^{n}\mean{B_{r/M}}|A(Du)-A(Dv)|\dx.
\end{align*}
We then estimate the tail term as
    \begin{align*}
        & \left(\frac{r}{M}\right)^{1 - (p-1) \ve_{1}} \left[ \left(\frac{M}{r}\right)^{p'} \tail(v-(v)_{B_{r/M}}; r/M) \right]^{p-1}
         = \left(\frac{r}{M}\right)^{1 - (p-1) \ve_{1}} \left[ \left(\frac{M}{r}\right)^{p'} \tail(u-(v)_{B_{r/M}}; r/M) \right]^{p-1}  \\
        & \leq c \mean{B_{r/M}} |Du - Dv|^{p-1} \dx + c \left(\frac{r}{M}\right)^{1 - (p-1) \ve_{1}} \left[ \left(\frac{M}{r}\right)^{p'} \tail(u-(u)_{B_{r/M}}; r/M) \right]^{p-1} \\
        & \leq c \mean{B_{r/M}} |Du - Dv|^{p-1} \dx + c (Mr)^{(1-s)p - (p-1) \ve_{1}} \mean{B_{M r}} |Du|^{p-1} \dx \\
        & \qquad + c (Mr)^{1 - (p-1) \ve_{1}} \left[ \frac{1}{(Mr)^{p'}} \tail(u-(u)_{B_{Mr}}; Mr) \right]^{p-1}\,,
\end{align*}
thereby getting
\begin{align*}
\lefteqn{ \mean{B_{\rho}}|A(Du)-(A(Du))_{B_{\rho}}|\dx } \\
& \le c\left(\frac{\rho}{Mr}\right)^{\beta_{v}}\Bigg[\mean{B_{r/M}}|A(Du)-(A(Du))_{B_{r/M}}|\dx + (Mr)^{\ve_{1}}\mean{B_{Mr}}|Du|^{p-1}\dx \\
& \qquad\qquad\qquad\qquad + (Mr)^{1-(p-1)\ve_{1}}\left[\frac{1}{(Mr)^{p'}}\tail(u-(u)_{B_{Mr}};Mr)\right]^{p-1} \Bigg] \\
& \quad + c\left(\frac{Mr}{\rho}\right)^{n}\mean{B_{r/M}}|A(Du)-A(Dv)|\dx\,.
\end{align*}
To estimate the last integral, we use Lemma~\ref{lem:A-comp} with the choice $\ve_{2} = (\rho/Mr)^{n+\beta_{v}}$, which gives
\begin{align*}
\lefteqn{ \mean{B_{r/M}}|A(Du)-A(Dv)|\dx }\\
& \le c\left(\frac{\rho}{Mr}\right)^{n+\beta_{v}}\mean{B_{Mr}}|A(Du)-(A(Du))_{B_{Mr}}|\dx + c\left(\frac{Mr}{\rho}\right)^{n(p-1)+\beta_{v}(p-2)}\left[\frac{|\mu|(B_{Mr})}{(Mr)^{n-1}}\right]  \\
& \quad + c(Mr)^{1-(p-1)\ve_{1}}\left[\frac{1}{(Mr)^{p'}}\tail(u-(u)_{B_{Mr}};Mr)\right]^{p-1}\,.
\end{align*}
Combining the last two displays, we obtain the desired estimate \eqref{ADuexcess-pge2}.
\end{proof}
As for the case $2-1/n < p < 2$, we have the following estimate resulting from Theorem~\ref{theo:homo-mixed-hol} and Lemma~\ref{lem:mix-comp2}.

\begin{lem}
Let $u$ be the weak solution to \eqref{eq:main} under assumptions \eqref{growth}--\eqref{kernel.growth} with $2-1/n < p < 2$, $s\in(0,1)$ and \eqref{regular-data}. For any $m \in (0,1-s)$ and $\varepsilon_{1} \in (0,(1-s)/p)$, there exist $R_{0} \in (0,1)$ and $\beta_{v} \in (0,\min\{1/(p-1),\beta_{w})$, $c_{6},c_{7} \ge 1$, all depending only on $\data$, $m$ and $\varepsilon_{1}$, such that for all concentric balls $B_{\rho}\subset B_{r} \subset \Omega$ with $r \le R_{0}$ it holds
\begin{align}\label{Duexcess}
\mean{B_{\rho}}|Du-(Du)_{B_{\rho}}|\dx 
& \le c_{6}\left(\frac{\rho}{r}\right)^{\beta_{v}}\Bigg[\mean{B_{r}}|Du-(Du)_{B_{r}}|\dx + r^{m-\varepsilon_{1}}\mean{B_{r}}|Du|\dx \nonumber \\
& \qquad\qquad\qquad + r^{[1-m(2-p)]/(p-1)-\varepsilon_{1}}\left[\frac{1}{r^{p'}}\tail(u-(u)_{B_{r};r})\right] \Bigg] \nonumber \\
& \quad + c_{7}\left(\frac{r}{\rho}\right)^{n}\left[\frac{|\mu|(B_{r})}{r^{n-1}}\right]^{1/(p-1)} + c_{7}\left(\frac{r}{\rho}\right)^{n}\left[\frac{|\mu|(B_{r})}{r^{n-1}}\right]\left(\mean{B_{r}}|Du|\dx\right)^{2-p}\,.
\end{align}
\end{lem}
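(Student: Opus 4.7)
Let $v \in W^{1,p}_{\loc}(\Omega)\cap W^{s,p}(\rn)$ solve the homogeneous mixed problem \eqref{eq:homo-mixed-2} on $B_{r}$ with $v=u$ outside. Since $1<p<2$, the relevant parameters from \eqref{bar a def} are $q_{0}=1$, $\bar{a}_{1}=m$, and $\bar{a}_{2}=(1-m(2-p))/(p-1)$. The plan is to apply Theorem~\ref{theo:homo-mixed-hol} to $v$, and then transfer the estimate from $Dv$ to $Du$ by using Lemma~\ref{lem:mix-comp2} to control the comparison error.

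\textbf{Step 1: Decomposition.} Via the triangle inequality and enlargement from $B_{\rho}$ to $B_{r}$,
\begin{equation*}
\mean{B_{\rho}}|Du-(Du)_{B_{\rho}}|\dx
\le 2\mean{B_{\rho}}|Dv-(Dv)_{B_{\rho}}|\dx + c\left(\frac{r}{\rho}\right)^{n}\mean{B_{r}}|Du-Dv|\dx.
\end{equation*}

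\textbf{Step 2: Excess decay for $Dv$.} Since $v$ solves the homogeneous mixed equation on $B_{r}$, Theorem~\ref{theo:homo-mixed-hol} (applied with outer ball $B_{r/2}\Subset B_{r}$) gives, after absorbing the factor $2^{n}$ into constants,
\begin{align*}
\mean{B_{\rho}}|Dv-(Dv)_{B_{\rho}}|\dx
&\leq c\left(\frac{\rho}{r}\right)^{\beta_{v}}\Bigg[\mean{B_{r/2}}|Dv-(Dv)_{B_{r/2}}|\dx + r^{m-\varepsilon_{1}}\mean{B_{r/2}}|Dv|\dx \\
&\qquad\qquad + r^{[1-m(2-p)]/(p-1)-\varepsilon_{1}}\left[\frac{1}{r^{p'}}\tail(v-(v)_{B_{r/2}};r/2)\right]\Bigg].
\end{align*}

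\textbf{Step 3: Replace $v$ by $u$.} I would rewrite each term on the right in terms of $u$-quantities and the comparison error $\mean{B_{r}}|Du-Dv|\dx$: the excess of $Dv$ and the average $\mean{B_{r/2}}|Dv|\dx$ each yield the corresponding $u$-expression on $B_{r}$ plus $c\mean{B_{r}}|Du-Dv|\dx$ by the triangle inequality. For the tail term, I split $\rn\setminus B_{r/2}=(B_{r}\setminus B_{r/2})\cup(\rn\setminus B_{r})$. On the annulus I bound $|y-x_{0}|\sim r$ and use Jensen's inequality $\mean{B_{r}}|v-(v)_{B_{r}}|^{p-1}\dx\le (\mean{B_{r}}|v-(v)_{B_{r}}|\dx)^{p-1}$ (valid since $p-1<1$) combined with Poincar\'e; on $\rn\setminus B_{r}$ I use $v=u$, expand $|u(y)-(v)_{B_{r/2}}|^{p-1}\le c|u(y)-(u)_{B_{r}}|^{p-1}+c|(u)_{B_{r}}-(v)_{B_{r/2}}|^{p-1}$, and control the mean shift by Poincar\'e applied to $u-v\in W^{1,p}_{0}(B_{r})$. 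This produces
\begin{equation*}
\left[\frac{1}{r^{p'}}\tail(v-(v)_{B_{r/2}};r/2)\right]
\le c\left[\frac{1}{r^{p'}}\tail(u-(u)_{B_{r}};r)\right]+cr^{-1/(p-1)}\mean{B_{r}}|Du-Dv|\dx.
\end{equation*}

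\textbf{Step 4: Bound the comparison error and combine.} Lemma~\ref{lem:mix-comp2} with $q=1$ gives
\begin{equation*}
\mean{B_{r}}|Du-Dv|\dx
\le c\left[\frac{|\mu|(B_{r})}{r^{n-1}}\right]^{1/(p-1)}
+c\left[\frac{|\mu|(B_{r})}{r^{n-1}}\right]\!\left(\mean{B_{r}}|Du|\dx\right)^{2-p}.
\end{equation*}
All residual instances of $\mean{B_{r}}|Du-Dv|\dx$ appearing inside the bracket in Step~2 carry a nonnegative power of $r$ (since $m-\varepsilon_{1}>0$ and $\bar{a}_{2}-\bar{a}_{1}-\varepsilon_{1}\ge 0$ for the parameter ranges allowed), and are multiplied by the small factor $(\rho/r)^{\beta_{v}}\le 1$, so they are dominated by $(r/\rho)^{n}\mean{B_{r}}|Du-Dv|\dx$ using $r\le 1$. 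Plugging the Riesz bound in delivers exactly the two $|\mu|$-terms with prefactor $(r/\rho)^{n}$.

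\textbf{Main obstacle.} The delicate point is the tail manipulation in Step~3: the outer ball of Theorem~\ref{theo:homo-mixed-hol} must be strictly inside $B_{r}$, so the tail is based at radius $r/2$, and after splitting one must verify that the Jensen step (exploiting $p-1<1$) combined with Poincar\'e on the annulus produces only powers of $r$ that match $[1-m(2-p)]/(p-1)-\varepsilon_{1}$ and do not destroy the claimed exponent. A second subtlety is ensuring the two forms of the Riesz-type bound from Lemma~\ref{lem:mix-comp2} combine with the prefactor $(r/\rho)^{n}$ exactly as in the statement, without spurious lower-order contributions.
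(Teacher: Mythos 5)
Your plan — compare $u$ with the homogeneous mixed solution $v$ on $B_r$, apply Theorem~\ref{theo:homo-mixed-hol} (with parameters $m,\varepsilon_1$) to $Dv$, transfer back to $Du$, and close with Lemma~\ref{lem:mix-comp2} at $q=1$ — is exactly the route the paper has in mind: the lemma is stated as ``resulting from Theorem~\ref{theo:homo-mixed-hol} and Lemma~\ref{lem:mix-comp2},'' and the structure of your Steps 1--4 is the natural execution of that. Steps 1, 2, and 4 are sound, and your use of $q_0=1$ in this range, the $B_{r/2}$-shrinkage, and the Jensen step on the annulus are all correct.

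The one problematic point is the exponent you announce at the end of Step 3. Tracking the tail-splitting carefully (exactly as the paper does inside the proof of Lemma~\ref{mix.loc.comp}), the comparison-error residual coming from both the annulus $B_r\setminus B_{r/2}$ and the outer shift $|(u)_{B_r}-(v)_{B_{r/2}}|$ enters $[\tail(v-(v)_{B_{r/2}};r/2)]^{p-1}$ with weight $r^{2p-1-sp}$, so after taking the $1/(p-1)$-power and dividing by $r^{p'}$ one obtains
\begin{equation*}
\frac{1}{r^{p'}}\tail\bigl(v-(v)_{B_{r/2}};r/2\bigr)
\le \frac{c}{r^{p'}}\tail\bigl(u-(u)_{B_r};r\bigr)
+ c\,r^{(p-1-sp)/(p-1)}\mean{B_r}|Dv|\dx
\end{equation*}
(where $\mean{B_r}|Dv|\le \mean{B_r}|Du|+\mean{B_r}|Du-Dv|$), not the factor $r^{-1/(p-1)}$ you wrote. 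The distinction matters: after multiplying by the prefactor $r^{\bar a_2 - \varepsilon_1}$ from Theorem~\ref{theo:homo-mixed-hol}, the residual carries the power
$\bar a_2 - \varepsilon_1 + (p-1-sp)/(p-1) = \bigl(p(1-s)-m(2-p)\bigr)/(p-1) - \varepsilon_1$,
which is strictly positive because $m<1-s$ forces $p(1-s)-m(2-p) > 2(1-s)(p-1)$, whereas with your stated $r^{-1/(p-1)}$ the combined exponent is $-m(2-p)/(p-1)-\varepsilon_1<0$, and the absorption in Step~4 fails. Likewise, the condition you quote, $\bar a_2-\bar a_1-\varepsilon_1\ge 0$, is not the relevant one for this term. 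With the sharper exponent in place, the $\mean{B_r}|Du|$ piece of the residual is dominated by $r^{\bar a_1-\varepsilon_1}\mean{B_r}|Du|$ (since $p(1-s)>m$) and the $\mean{B_r}|Du-Dv|$ piece is dominated by the $(r/\rho)^n\mean{B_r}|Du-Dv|$ term, after which Lemma~\ref{lem:mix-comp2} delivers \eqref{Duexcess}. Also note that the mean shift $(u)_{B_r}-(v)_{B_{r/2}}$ has two pieces, $(u)_{B_r}-(u)_{B_{r/2}}$ (Poincar\'e on $u$, producing a $\mean{B_r}|Du|$ contribution) and $(u)_{B_{r/2}}-(v)_{B_{r/2}}$ (Poincar\'e on $u-v$); your description only mentions the latter, so make the former explicit.
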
 

\subsection{A tail estimate for $2-1/n<p<\infty$}
We provide the following tail estimate for $u$.

\begin{lem}\label{lem:u-tail} 
Let $u$ be the weak solution to \eqref{eq:main} under assumptions \eqref{growth}--\eqref{s-p-range} and \eqref{regular-data}. Recall $q_0=\max\{p-1,1\}$. Then there exists $c_{8}=c_{8}(\data)$ such that, for every concentric balls $B_{\rho} \subset B_{r} \subset \Omega$,
\begin{align}\label{u-tail}
\left[ \frac{1}{\rho^{p'}} \tail(u-(u)_{B_{\rho}};\rho) \right]^{q_0}
& \leq c_{8} \left[ 1 + r^{(1-s)p}\left( \frac{r}{\rho} \right)^{(1-\alpha)(p-1) + 1} \right]^{q_{0}/(p-1)} \left[ \frac{1}{r^{p'}} \tail(u-(u)_{B_{r}};r) \right]^{q_0} \notag \\
& \quad + c_{8} \left[ r^{(1-s)p-1} \left( \frac{r}{\rho} \right)^{n+p} \right]^{q_{0}/(p-1)} \left( \mean{B_{r}}|Du|^{q_0} \dx + \left[ \frac{|\mu|(B_{r})}{r^{n-1}} \right]^{q_0/(p-1)} \right)\,.
\end{align}
\end{lem}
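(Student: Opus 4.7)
The plan is to reduce the tail estimate for $u$ to its analogue for the solution $v$ of the corresponding homogeneous mixed problem (Lemma~\ref{lem:tail-dec-v}), and then to transport the resulting bound back to $u$ via the basic comparison Lemma~\ref{lem:mix-comp2}. Let $v\in W^{1,p}(\rn)$ solve~\eqref{eq:homo-mixed-2} on $B_{r}$ with boundary datum $u$, extended by $v\equiv u$ on $\rn\setminus B_{r}$, so that $u-v\in W^{1,p}_{0}(B_{r})$ and Poincar\'e's inequality is available on $B_r$.

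The first step is a tail comparison at the inner radius. Writing $u-(u)_{B_{\rho}}=(v-(v)_{B_{\rho}})+(u-v)-((u)_{B_{\rho}}-(v)_{B_{\rho}})$, using $|a+b|^{p-1}\le c(|a|^{p-1}+|b|^{p-1})$, and recalling $u=v$ on $\rn\setminus B_{r}$, the integrand defining $\tail(u-(u)_{B_{\rho}};\rho)^{p-1}$ splits into a tail of $v$, an integral over $B_{r}\setminus B_{\rho}$ of $|u-v|^{p-1}|x-x_{0}|^{-n-sp}$ (bounded via $|x-x_{0}|^{-n-sp}\le\rho^{-n-sp}$), and a mean-difference piece of the form $\rho^{-sp}|(u)_{B_{\rho}}-(v)_{B_{\rho}}|^{p-1}$. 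Poincar\'e's inequality on $u-v\in W^{1,p}_{0}(B_{r})$ combined with Jensen's inequality (applied in opposite directions according to whether $p\ge 2$ or $p<2$) yields in both regimes the uniform bound $\mean{B_{r}}|u-v|^{p-1}\dx\le c\,r^{p-1}\bigl(\mean{B_{r}}|Du-Dv|^{q_{0}}\dx\bigr)^{(p-1)/q_{0}}$, which after combination and division by $\rho^{p}$ produces
\[
\left[\frac{1}{\rho^{p'}}\tail(u-(u)_{B_{\rho}};\rho)\right]^{p-1}\le c\left[\frac{1}{\rho^{p'}}\tail(v-(v)_{B_{\rho}};\rho)\right]^{p-1}+c\,r^{(1-s)p-1}\left(\frac{r}{\rho}\right)^{n+sp}\left(\mean{B_{r}}|Du-Dv|^{q_{0}}\dx\right)^{(p-1)/q_{0}}.
\]

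The second step applies Lemma~\ref{lem:tail-dec-v} to $v$ to bound $\tail(v-(v)_{B_{\rho}};\rho)/\rho^{p'}$ in terms of $\tail(v-(v)_{B_{r}};r)/r^{p'}$ and $(\mean{B_{r}}|Dv|^{q_{0}}\dx)^{1/q_{0}}$. Repeating the same splitting argument as in the first step but now at the outer radius $r$ (and with the roles of $u$ and $v$ swapped) trades $\tail(v-(v)_{B_{r}};r)$ for $\tail(u-(u)_{B_{r}};r)$ at the cost of an additional $c\,r^{(1-s)p-1}(\mean{B_{r}}|Du-Dv|^{q_{0}}\dx)^{(p-1)/q_{0}}$, while Minkowski's inequality replaces $(\mean{B_{r}}|Dv|^{q_{0}})^{1/q_{0}}$ by $(\mean{B_{r}}|Du|^{q_{0}})^{1/q_{0}}+(\mean{B_{r}}|Du-Dv|^{q_{0}})^{1/q_{0}}$. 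All occurrences of $\mean{B_{r}}|Du-Dv|^{q_{0}}\dx$ are then controlled by Lemma~\ref{lem:mix-comp2} with $q=q_{0}$: for $p\ge 2$ this yields $c[|\mu|(B_{r})/r^{n-1}]$ directly, while for $2-1/n<p<2$ the extra mixed term $[|\mu|(B_{r})/r^{n-1}](\mean{B_{r}}|Du|\dx)^{2-p}$ is absorbed by Young's inequality with conjugate exponents $1/(p-1)$ and $1/(2-p)$, giving in both cases $\mean{B_{r}}|Du-Dv|^{q_{0}}\dx\le c[|\mu|(B_{r})/r^{n-1}]^{q_{0}/(p-1)}+c\mean{B_{r}}|Du|^{q_{0}}\dx$.

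The final step raises the combined inequality to the power $q_{0}/(p-1)$ (trivial when $p\ge 2$, and using $(a+b)^{\gamma}\le c(a^{\gamma}+b^{\gamma})$ for $\gamma=q_{0}/(p-1)\ge 1$ when $p<2$), and uses $(1-\alpha)(p-1)+1\le p$ together with $n+sp\le n+p$ to majorise the exponents on $r/\rho$ appearing in the error terms; this reproduces exactly~\eqref{u-tail}. The main technical obstacle is the consistent bookkeeping of exponents through the case split $p\ge 2$ versus $p<2$: the quantity $q_{0}=\max\{p-1,1\}$ interacts non-trivially with Jensen's inequality (applied in opposite directions in the two regimes) and with Lemma~\ref{lem:mix-comp2} (whose extra mixed term in the subquadratic range forces the Young-type trick), and this interplay is what dictates the precise combination of exponents appearing on the right-hand side of the claim.
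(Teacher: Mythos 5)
Your plan is correct and follows essentially the same route as the paper's proof: split the tail of $u$ into the tail of $v$ plus error terms that vanish outside $B_r$ (using $u=v$ on $\rn\setminus B_r$), apply Lemma~\ref{lem:tail-dec-v} to $v$, swap $v$ back to $u$ at the outer radius, bound the $|u-v|$ contributions via Poincar\'e and Lemma~\ref{lem:mix-comp2}, and absorb the mixed term in the subquadratic range via Young's inequality before raising to the power $q_0/(p-1)$. The only cosmetic difference is that you pass through $(\mean{B_r}|Dv|^{q_0})^{1/q_0}$ and Minkowski where the paper uses the intermediate form $r^{-1}\mean{B_r}|v-(v)_{B_r}|$ followed by Sobolev--Poincar\'e; both give the same bound.
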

\begin{proof}
    First, we consider the case $p \geq 2$, i.e., $q_0 = p-1$.
    By a straightforward calculation, we have
    \begin{align*}
        & \left[ \frac{1}{\rho^{p'}} \tail(u-(u)_{B_{\rho}}; \rho) \right]^{p-1} \\
        & \leq c \left[ \frac{1}{\rho^{p'}} \tail(v-(v)_{B_{\rho}}; \rho) \right]^{p-1}  + c \int_{\rn \setminus B_{\rho}}\frac{|(u-v)_{B_{\rho}}|^{p-1}}{|x-x_{0}|^{n+sp}} \dx
        + c \int_{\rn \setminus B_{\rho}} \frac{|u(x)-v(x)|^{p-1}}{|x-x_{0}|^{n+sp}} \dx \\
        &  \leq c \left[ \frac{1}{\rho^{p'}} \tail(v-(v)_{B_{\rho}}; \rho) \right]^{p-1} + \frac{c}{\rho^{sp}} |(u-v)_{B_{\rho}}|^{p-1} + \frac{c}{\rho^{n+sp}} \int_{B_{r}} |u-v|^{p-1} \dx \\
        & \leq c \left[ \frac{1}{\rho^{p'}} \tail(v-(v)_{B_{\rho}}; \rho) \right]^{p-1} + c r^{-sp} \left( \frac{r}{\rho} \right)^{n+sp} \mean{B_{r}} |u-v|^{p-1} \dx
    \end{align*}
    and similarly
    \begin{align*}
        \left[ \frac{1}{r^{p'}} \tail(v-(v)_{B_{r}}; r) \right]^{p-1}
        & \leq c \left[ \frac{1}{r^{p'}} \tail(u-(u)_{B_{r}}; r) \right]^{p-1} + c r^{-sp} \mean{B_{r}} |u-v|^{p-1} \dx.
    \end{align*}
    Then by Lemma \ref{lem:tail-dec-v},
    \begin{align*}
         \left[ \frac{1}{\rho^{p'}}\tail(v-(v)_{B_{\rho}};\rho) \right]^{p-1}
         &\leq c \left[ 1 + r^{(1-s)p} \left( \frac{r}{\rho} \right)^{(1-\alpha)(p-1)+1} \right]  \left[ \frac{1}{r^{p'}}\tail(v- (v)_{B_{r}}; r) \right]^{p-1} \notag \\
        & \quad + c r^{(1-s)p-1} \left( \frac{r}{\rho} \right)^{(1-\alpha)(p-1)+1} \left[ \frac{1}{r} \mean{B_r}|v-(v)| \dx \right]^{p-1} \notag \\
        &\leq c \left[ 1 + r^{(1-s)p} \left( \frac{r}{\rho} \right)^{(1-\alpha)(p-1)+1} \right]  \left[ \frac{1}{r^{p'}}\tail(u- (u)_{B_{r}}; r) \right]^{p-1} \notag \\
        & \quad + c r^{-sp} \left( \frac{r}{\rho} \right)^{(1-\alpha)(p-1)+1} \left[ \mean{B_{r}} |u-v|^{p-1} \dx + \mean{B_{r}} |u-(u)_{B_{r}}|^{p-1} \dx \right] \,.
    \end{align*}
Finally, Sobolev--Poincar\'e inequality and Lemma \ref{lem:mix-comp2} give
\begin{align*}
\left[ \frac{1}{\rho^{p'}} \tail(u-(u)_{B_{\rho}}; \rho) \right]^{p-1}
& \leq c \left[ 1 + r^{(1-s)p} \left( \frac{r}{\rho} \right)^{(1-\alpha)(p-1)+1} \right]  \left[ \frac{1}{r^{p'}}\tail(u- (u)_{B_{r}}; r) \right]^{p-1} \\
& \quad + c r^{(1-s)p-1} \left( \frac{r}{\rho} \right)^{n+p} \left ( \left[ \frac{|\mu|(B_{r})}{r^{n-1}} \right] + \mean{B_{r}} |Du|^{p-1} \dx \right)\, .
\end{align*}
In the last line, we have used $n+sp < n + p$, $(1-\alpha)(p-1)+1 <p$, and $R_{1} < 1$. 
This completes the proof of the lemma for the case $p \geq 2$.
The proof for the case $p<2$ is completely similar; the only difference arises from the appearance of an additional term due to Lemma~\ref{lem:mix-comp2}.
Nonetheless, one can handle it by employing Young's inequality. 
\end{proof}

\subsection{Estimates for SOLA}\label{est.sola}
So far, all the estimates in this section were obtained for weak solutions to \eqref{eq:main} under the additional assumption on the regularity of $\mu$ and $g$ \eqref{regular-data}.  
Now we consider a SOLA $u$ to \eqref{eq:main} and let $ \{u_{j}\}$ be an approximating sequence for $u$ being solutions to problems with measures $\mu_j$ and boundary data $g_j$, as described in Definition~\ref{def.sola}. 
Once we have the a priori estimates \eqref{ADuexcess-pge2}, \eqref{Duexcess} and \eqref{u-tail} for $u_{j}$, then the convergence properties of $u_{j}$, $\mu_{j}$ and $g_{j}$, along with a standard approximation argument (see for instance the proof of \cite[Lemma~6.1]{ByunSong}), show that the same estimates are valid for $u$ as well. We summarize the conclusion as follows. 
\begin{coro}
Let $u$ be a SOLA to \eqref{eq:main} under assumptions \eqref{growth}--\eqref{s-p-range}.
Then the estimates \eqref{ADuexcess-pge2}, \eqref{Duexcess}, and \eqref{u-tail} continue to hold for $u$.
\end{coro}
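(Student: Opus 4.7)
The plan is to proceed by a standard approximation and passage to the limit. By Definition~\ref{def.sola}, the SOLA $u$ comes equipped with a sequence $\{u_j\}\subset W^{1,p}(\rn)$ of weak solutions to the regularized problems associated with smooth data $\mu_j\in C_0^\infty(\rn)$ and $g_j\in C_0^\infty(\rn)$. In particular each $u_j$ fulfills the regularity assumption \eqref{regular-data}, so the a priori bounds \eqref{ADuexcess-pge2}, \eqref{Duexcess} and \eqref{u-tail} apply to every $u_j$ on every admissible pair of concentric balls $B_\rho\subset B_r\subset\Omega$ whose radii lie below the scales fixed in the lemmas of Sections~\ref{sec:quant-hold}--\ref{sec:ex-dec-measure-data}. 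It remains to pass to the limit $j\to\infty$ in each of the three inequalities.

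On the left-hand sides I would exploit the strong convergence $u_j\to u$ in $W^{1,q}(\Omega)$ for any $q\in[q_0,\min\{n(p-1)/(n-1),p\})$ supplied by Definition~\ref{def.sola}. After extracting a subsequence with $Du_j\to Du$ a.e., the growth condition \eqref{growth} upgrades this to $A(Du_j)\to A(Du)$ in $L^{q_0/(p-1)}_{\loc}$. Consequently the oscillation averages $\mean{B_\rho}|Du-(Du)_{B_\rho}|\dx$ and $\mean{B_\rho}|A(Du)-(A(Du))_{B_\rho}|\dx$ arise as strong $L^1$-limits of their counterparts for $u_j$ (lower semicontinuity via Fatou would already suffice on the left-hand side), and the same is true of all the local gradient averages entering the tail-free factors on the right-hand sides.

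For the measure-theoretic terms $|\mu_j|(B_r)/r^{n-1}$ I would use the hypothesis $\limsup_j|\mu_j|(B)\le|\mu|(\overline{B})$ of Definition~\ref{def.sola}: choosing $r$ outside the at most countable set $\{\rho>0:|\mu|(\partial B_\rho(x_0))>0\}$, or alternatively enlarging $r$ slightly and letting the enlargement vanish at the end, recovers the bound in terms of $|\mu|(B_r)$. The nonlocal tail terms $\tail(u_j-(u_j)_{B_r};x_0,r)$ are treated via the decomposition $u_j-u=(u_j-g_j)-(u-g)+(g_j-g)$: the first two summands lie in $\mathcal{X}^{1,p}_0(\Omega)$ (so they vanish a.e.\ on $\rn\setminus\Omega$) and converge in $W^{1,q}(\Omega)\hookrightarrow L^{p-1}(\Omega)$ for any admissible $q$, which controls the contribution to the tail integral from any bounded shell $B_R\setminus B_r$; meanwhile Definition~\ref{def.sola} provides $\tail(g_j-g;x_0,r)\to 0$, which handles the far-field. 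Combined with $(u_j)_{B_r}\to(u)_{B_r}$, this yields the required convergence of the tails.

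The main technical point, and essentially the only one beyond routine bookkeeping, is the justification of this tail convergence by dominated convergence. For this one needs a $j$-uniform integrable majorant for $|u_j(y)-(u_j)_{B_r}|^{p-1}/|y-x_0|^{n+sp}$ on $\rn\setminus B_r$, which is secured by the uniform bound on $\|g_j\|_{L^{p-1}_{sp}(\rn)}$ (a consequence of the tail convergence of $g_j$ in Definition~\ref{def.sola}) together with the local $L^{p-1}$-bound on $u_j$ in $\Omega$ implied by the $W^{1,q}$-convergence. Once this is in place, the three estimates \eqref{ADuexcess-pge2}, \eqref{Duexcess} and \eqref{u-tail} pass term by term to the limit, delivering the claim.
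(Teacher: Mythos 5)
Your argument is correct, and it is exactly the ``standard approximation argument'' that the paper invokes without writing out: the paper's text preceding the corollary simply cites the approach of \cite[Lemma~6.1]{ByunSong} and the convergence properties stipulated in Definition~\ref{def.sola}. You have reconstructed that argument faithfully, identifying the right convergences to use on each side.

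A few small remarks on completeness, not gaps. On the left-hand sides, as you note, Fatou alone suffices since $A(Du_j)\to A(Du)$ and $Du_j\to Du$ a.e.\ along a subsequence and the averages $(A(Du_j))_{B_\rho}\to(A(Du))_{B_\rho}$ by the $L^1_{\loc}$-convergence. For the measure terms, your two options (choosing $r$ outside the countable exceptional set, or relaxing to $|\mu|(\overline{B_r})$) are both standard; it is worth noting that the latter is already enough for the downstream proof of Theorem~\ref{theo:main}, since \eqref{Riesz-sum} and its analogue dominate $\sum_i|\mu|(\overline{B^i})/r_i^{n-1}$ by $\cI^{|\mu|}_1(x_0,2r_0)$ via the inclusion $\overline{B^i}\subset B_\rho$ for $\rho>r_i$. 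Finally, your treatment of the tails is the correct one: the hypothesis $\tail(g_j-g;x_0,r)\to0$ controls the far field on $\rn\setminus\Omega$, the $W^{1,q}(\Omega)\hookrightarrow L^{p-1}(\Omega)$ convergence controls the bounded shell $\Omega\setminus B_r$ (using that the weight $|y-x_0|^{-(n+sp)}$ is bounded above and below there), and the combination gives convergence of $u_j-(u_j)_{B_r}$ in the weighted space $L^{p-1}(\rn\setminus B_r,\,|y-x_0|^{-(n+sp)}\,\mathrm{d}y)$; one applies the generalized dominated convergence theorem with the $L^1$-convergent majorant sequence you describe rather than a single fixed dominating function.
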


\section{Proof of Theorem~\ref{theo:main}}\label{sec.pf}

\subsection{The case $p \ge 2$}
In this section, we will prove Theorem~\ref{theo:main} for the case when $p \geq 2$.

\begin{proof}[Proof of Theorem \ref{theo:main} {\it (i)}] 
Let us recall important constants which we will use throughout this proof: 
$M$ and $R_{1}$ determined in Lemma~\ref{lem:Dv-low-up}, $\beta_{v} \in (0,1)$ in Proposition~\ref{prop:homo-mixed-hol2} (originally in Theorem~\ref{theo:homo-mixed-hol}), $c_{4},c_{5}$, and $\eta$ in Lemma~\ref{lem:ADuexcess-pge2}, while $c_8$ in Lemma~\ref{lem:u-tail}. 
Under the condition $p \geq 2$, in light of Lemma~\ref{lem:mono}, the estimate from Lemma~\ref{lem:u-tail} will be applied in the following form: for every concentric balls $B_{\rho} \subset B_{r} \subset \Omega$ we have
\begin{align}\label{u-tail-2}
        \left[ \frac{1}{\rho^{p'}} \tail(u-(u)_{B_{\rho}};\rho) \right]^{p-1}
        & \leq c_{8} \left[ 1 + r^{(1-s)p}\left( \frac{r}{\rho} \right)^{(1-\alpha)(p-1) + 1} \right] \left[ \frac{1}{r^{p'}} \tail(u-(u)_{B_{r}};r) \right]^{p-1} \notag \\
        & \quad + c_{8} r^{(1-s)p-1} \left( \frac{r}{\rho} \right)^{n+p} \left( \mean{B_{r}}|A(Du)| \dx + \left[ \frac{|\mu|(B_{r})}{r^{n-1}} \right] \right)\,.
    \end{align}
Here, the constant $c_{8}$ in the above estimate is distinct from the one in Lemma~\ref{lem:u-tail}, since $A(z)$ is not exactly $|z|^{p-2}z$.
However, we are going to denote it using the same notation for the sake of convenience.

Let $\sigma\in(0,1)$ be an arbitrary number as in the statement. We observe that it is enough to provide a proof under the condition 
\begin{equation}\label{sigma.add}
\sigma > \max\{1-(1-s)p,0\}\,,
\end{equation}
as then the remaining case follows immediately. 
Hence, in the rest of this section, we assume \eqref{sigma.add}. 

We now fix
\[ \ve_{1} = \frac{1}{4p} \min \{ 1-\sigma, 1-s \} \]
and then choose $\tau \in (0, 1/M)$ to satisfy
\begin{equation}\label{sec6-tau}
2c_{4} \tau^{\beta_{v}} \leq 1\,, \quad 2\tau^{\ve_{1}/p} \leq 1\,, \quad 8 c_{8} \tau^{\sigma} \leq 1\,, \quad 2 \tau^{\sigma + (1-s)p -1} \le 1\,.
\end{equation}
Note that as we have chosen $\ve_{1}$ depending only on $\data$ and $\sigma$ and considering the case $p \geq 2$, all the constants introduced so far also depend only on $\data$ and $\sigma$. 
Further, we take some small radius $\bar R \in (0, R_{1})$, where $R_{1}$ comes from Lemma~\ref{lem:Dv-low-up}, to satisfy 
\begin{equation}\label{R.choice.ge2}
c_{5}\bar R^{\ve_{1}/4} \leq \tau^{10n} \,, \quad 8 c_{8} \bar R^{\sigma + (1-s)p -1} \leq \tau^{n+p}\,.
\end{equation}
For any $r \in (0, \bar R/2]$ such that $B_{2r}(x_{0}) \subset \Omega$, let us consider a sequence of concentric balls
\[ \bk{k} = B_{r_{k}}(x_0), \quad r_{k} = \tau^{k} r, \quad \text{for every} \quad k = 0, 1, 2, \dots\, . \]
Then Lemma~\ref{lem:ADuexcess-pge2} gives 
\begin{align*} 
\mean{\bk{i+1}} |A(Du) - (A(Du))_{\bk{i+1}}| \dx 
\overset{\eqref{sec6-tau}_{1},\eqref{R.choice.ge2}_{1}}&{\leq} \frac{1}{2} \mean{\bk{i}}|A(Du) - (A(Du))_{\bk{i}}| \dx + \tau^{9n} r_{i}^{3 \ve_{1}/4} \mean{\bk{i}} |A(Du)| \dx \notag \\
& \qquad + \tau^{9n} r_{i}^{\sigma + \ve_{1}} \left[ \frac{1}{r_{i}^{p'}} \tail(u-(u)_{\bk{i}};r_{i}) \right]^{p-1} + c_{5} \tau^{-\eta} \left[ \frac{|\mu|(\bk{i})}{r_{i}^{n-1}} \right]\,.
\end{align*}
Choose any natural numbers $k_{1} < k_{2}$. Summing up \eqref{ADu-pot-1} over $i \in \{k_{1},\ldots k_{2}-1\}$, we obtain
\begin{align*}
\lefteqn{ \sum_{i=k_{1}+1}^{k_{2}}\mean{B^{i}}|A(Du)-(A(Du))_{B^{i}}|\dx }  \\
& \le \frac{1}{2}\sum_{i=k_{1}}^{k_{2}-1}\mean{B^{i}}|A(Du)-(A(Du))_{B^{i}}|\dx + \tau^{9n}\sum_{i=k_{1}}^{k_{2}-1}r_{i}^{3\varepsilon_{1}/4}\mean{B^{i}}|A(Du)|\dx  \\
& \quad + \tau^{9n}\sum_{i=k_{1}}^{k_{2}-1}r_{i}^{\sigma+\varepsilon_{1}}\left[\frac{1}{r_{i}^{p'}}\tail(u-(u)_{B^{i}};r_{i})\right]^{p-1} + c_{5}\tau^{-\eta}\sum_{i=k_{1}}^{k_{2}-1}\left[\frac{|\mu|(B^{i})}{r_{i}^{n-1}}\right]\,,
\end{align*}
which in turn gives
\begin{align}\label{ADu-pot-1}
\lefteqn{ \sum_{i=k_{1}}^{k_{2}}\mean{B^{i}}|A(Du)-(A(Du))_{B^{i}}|\dx } \nonumber \\
& \le 2\mean{B^{k_{1}}}|A(Du)-(A(Du))_{B^{0}}|\dx + 2\tau^{9n}\sum_{i=k_{1}}^{k_{2}-1}r_{i}^{3\varepsilon_{1}/4}\mean{B^{i}}|A(Du)|\dx \nonumber \\
& \quad + 2\tau^{9n}\sum_{i=k_{1}}^{k_{2}-1}r_{i}^{\sigma+\varepsilon_{1}}\left[\frac{1}{r_{i}^{p'}}\tail(u-(u)_{B^{i}};r_{i})\right]^{p-1} + 2c_{5}\tau^{-\eta}\sum_{i=k_{1}}^{k_{2}-1}\left[\frac{|\mu|(B^{i})}{r_{i}^{n-1}}\right]\,.
\end{align}
    
To proceed further, we set
\begin{align}\label{ADu-pot-2}
\lambda_{A}^{p-1} \coloneqq \tau^{-8n} \mean{\bk{0}} |A(Du)| \dx + \tau^{-8n} r_{0}^{\sigma} \left[ \frac{1}{r_{0}^{p'}} \tail(u-(u)_{\bk{0}} ; r_{0})\right]^{p-1} + 2 c_{5} \tau^{-8n-\eta} \cI^{|\mu|}_{1}(x_0, 2r_{0})\,,
\end{align}
which is finite by the assumptions of Theorem~\ref{theo:main}.
Note that, since $\tau < 1/4$, it holds
\begin{align}\label{Riesz-sum}
\sum_{i=0}^{\infty} \left[ \frac{|\mu|(\bk{i})}{r_{i}^{n-1}} \right] & \leq \sum_{i=1}^{\infty} \tau^{-(n-1)} (- \log \tau) \int_{r_{i}}^{r_{i-1}} \frac{|\mu|(B_{\rho}(x_{0}))}{\rho^{n-1}} \frac{{\rm d} \rho}{\rho} + 2^{(n-1)} \log 2 \int_{r_{0}}^{2 r_{0}} \frac{|\mu|(B_{\rho}(x_0))}{\rho^{n-1}} \frac{{\rm d} \rho}{\rho} \nonumber \\
&\leq  \tau^{-n} \cI^{|\mu|}_{1} (x_0, 2r_{0}) \leq \frac{\tau^{7n+\eta}}{2 c_{5}} \lambda_{A}^{p-1}\,.
\end{align}

We claim that for every integer $k \geq 0$ the following estimates hold true
\begin{align}\label{ADu-pot-ind1}
\sum_{i=0}^{k} \mean{\bk{i}} |A(Du) - (A(Du))_{B^{i}}| \dx \leq \tau^{4n} \lambda_{A}^{p-1}\,,
\end{align}
\begin{align}\label{ADu-pot-ind2}
\sum_{i=0}^{k} r_{i}^{\sigma} \left[ \frac{1}{r_{i}^{p'}} \tail(u-(u)_{B^{i}} ; r_{i}) \right]^{p-1} \leq \lambda_{A}^{p-1}\,,
\end{align}
and
\begin{align}\label{ADu-pot-ind3}
\mean{\bk{k}}|A(Du)| \dx \leq \lambda_{A}^{p-1}\,.
\end{align}
The proof of the claim is somewhat different from that of Theorem~\ref{theo:homo-mixed-hol} since we have to deal with the sum of excess functionals, taking into account additional terms related to $\mu$. 
By definition \eqref{ADu-pot-2}, we have \eqref{ADu-pot-ind2}, \eqref{ADu-pot-ind3} for $k=0$ and
\begin{align*}
\mean{\bk{0}} |A(Du) - (A(Du))_{\bk{0}} | \dx \leq  2 \mean{\bk{0}} |A(Du)| \dx \leq 2 \tau^{8n} \lambda_{A}^{p-1}\,.
\end{align*}
We now assume that there is $k \in \N$ such that \eqref{ADu-pot-ind1}--\eqref{ADu-pot-ind3} hold for every $i=0, 1, \dots, k$.
Applying \eqref{ADu-pot-1} and then \eqref{Riesz-sum}--\eqref{ADu-pot-ind3}, we have
\begin{align*}
& \sum_{i=0}^{k+1} \mean{\bk{i}} |A(Du) - (A(Du))_{\bk{i}}| \dx \notag \\
& \leq 2\mean{\bk{0}} |A(Du) - (A(Du))_{\bk{0}} | \dx + 2\tau^{9n} \sum_{i=0}^{k} r_{i}^{3 \ve_{1}/4} \mean{\bk{i}} |A(Du)| \dx \notag \\
& \qquad + 2\tau^{9n} \sum_{i=0}^{k} r_{i}^{\sigma + \ve_{1}} \left[ \frac{1}{r_{i}^{p'}} \tail (u-(u)_{\bk{i}} ; r_{i}) \right]^{p-1} + 2c_{5} \tau^{-\eta} \sum_{i=0}^{k} \left[ \frac{|\mu|(\bk{i})}{ r_{i}^{n-1}} \right] \notag \\
& \leq 4 \tau^{8n} \lambda_{A}^{p-1} + 2\tau^{9n}\frac{r_{0}^{3\ve_{1}/4}}{1-\tau^{3 \ve_{1}/4}} \lambda_{A}^{p-1} + 2 \tau^{9n} \lambda_{A}^{p-1}  + \tau^{7n} \lambda_{A}^{p-1} 
\overset{\eqref{sec6-tau}_{1,2}}{\leq} \tau^{4n} \lambda_{A}^{p-1}\,.
\end{align*}
On the other hand, \eqref{ADu-pot-2} and \eqref{ADu-pot-ind1} yield
\begin{align*}
\mean{\bk{k+1}} |A(Du)| \dx
& \leq \tau^{-n} \sum_{i=0}^{k} \mean{\bk{i}}|A(Du)-(A(Du))_{\bk{i}}| \dx + \mean{\bk{0}} |A(Du)| \dx \notag \notag \\
& \leq \tau^{3n} \lambda_{A}^{p-1} + \tau^{8n} \lambda_{A}^{p-1} \leq \lambda_{A}^{p-1}\,.
\end{align*}
Recalling \eqref{u-tail-2}, we use $\eqref{sec6-tau}_{2,3}$, \eqref{ADu-pot-2}, \eqref{ADu-pot-ind2}, \eqref{ADu-pot-ind3}, and \eqref{Riesz-sum} in order to estimate 
\begin{align*}
\lefteqn{ \sum_{i=0}^{k+1} r_{i}^{\sigma} \left[ \frac{1}{r_{i}^{p'}} \tail(u-(u)_{B^{i}} ; r_{i}) \right]^{p-1} } \notag \\
& \leq r_{0}^{\sigma} \left[ \frac{1}{r_{0}^{p'}} \tail(u-(u)_{B^{0}} ; r_{0}) \right]^{p-1} + c_{8} \sum_{i=0}^{k} \left[ 1 + r_{i}^{(1-s)p} \tau^{-(1-\alpha)(p-1) - 1} \right] r_{i+1}^{\sigma} \left[ \frac{1}{r_{i}^{p'}} \tail(u-(u)_{B^{i}} ; r_{i}) \right]^{p-1} \notag \\
& \quad + c_{8} \sum_{i=0}^{k} \left[ \tau^{{\sigma}-n-p} r_{i}^{\sigma + (1-s)p-1} \right] \left( \mean{\bk{i}}|A(Du)| \dx + \left[ \frac{|\mu|(\bk{i})}{r_{i}^{n-1}} \right] \right) \notag \\
& \leq r_{0}^{\sigma} \left[ \frac{1}{r_{0}^{p'}} \tail(u-(u)_{B^{0}} ; r_{0}) \right]^{p-1} + 2c_{8}\tau^{\sigma} \sum_{i=0}^{k} r_{i}^{\sigma} \left[ \frac{1}{r_{i}^{p'}} \tail(u-(u)_{B^{i}} ; r_{i}) \right]^{p-1}\\
& \quad + c_{8}\tau^{-n-p} \sum_{i=0}^{k} r_{i}^{\sigma + (1-s)p - 1}\mean{\bk{i}}|A(Du)| \dx + c_{8}\tau^{-n-p}r_{0}^{\sigma+(1-s)p-1} \sum_{i=0}^{k} \left[ \frac{|\mu|(\bk{i})}{r_{i}^{n-1}} \right] \notag \\
& \leq \tau^{8n} \lambda_{A}^{p-1}
+ 2c_{8}\tau^{\sigma}\lambda_{A}^{p-1}
+ c_{8}\tau^{-n-p}\frac{r_{0}^{\sigma + (1-s)p-1}}{1-\tau^{\sigma + (1-s)p-1}}\lambda_{A}^{p-1}
+ c_{8}\tau^{-n-p}r_{0}^{\sigma + (1-s)p-1} \lambda_{A}^{p-1} 
\overset{\eqref{sec6-tau}_{4}}{\leq} \lambda_{A}^{p-1}\,.
\end{align*}
This completes the proof of the claim. 
In particular, the uniform bound \eqref{ADu-pot-ind1} implies
\[ \lim_{i\to\infty}\mean{B^{i}}|A(Du)-(A(Du))_{B^{i}}|\dx = 0, \]
and then an easy manipulation shows that $A(Du)$ is VMO at $x_{0}$. 
    
Our next goal is to establish that $x_0$ is a Lebesgue point of $A(Du)$.
Take any $k_1, k_2 \in \N$ with $k_1 < k_2$.
Then \eqref{ADu-pot-1} along with \eqref{ADu-pot-ind3} leads to
\begin{align}\label{ADu-pot-final}
|(A(Du))_{B^{k_{1}}} - (A(Du))_{B^{k_{2}}}| & \le \sum_{i=k_{1}}^{k_{2}-1}\left|(A(Du))_{B^{i}}-(A(Du))_{B^{i+1}}\right| 
 \le \sum_{i=k_{1}}^{k_{2}-1}\tau^{-n}\mean{B^{i}}|A(Du)-(A(Du))_{B^{i}}|\dx  \nonumber \\
& \le c\mean{B^{k_{1}}}|A(Du)-(A(Du))_{B^{k_{1}}}|\dx + c\lambda_{A}^{p-1}\sum_{i=k_{1}}^{k_{2}-1}r_{i}^{3\varepsilon_{1}/4} \nonumber \\
& \quad  + cr_{0}^{\ve_{1}}\sum_{i=k_{1}}^{k_{2}-1}r_{i}^{\sigma}\left[\frac{1}{r_{i}^{p'}}\tail(u-(u)_{B^{i}};r_{i})\right]^{p-1} + c\sum_{i=k_{1}}^{k_{2}-1}\left[\frac{|\mu|(B^{i})}{r_{i}^{n-1}}\right]\,.
\end{align}
Due to \eqref{ADu-pot-ind2} and the facts that $\cI^{|\mu|}_{1}(x_{0},2r_{0})<\infty$ and $A(Du)$ is VMO at $x_{0}$, $\{ (A(Du))_{\bk{i}} \}$ is a Cauchy sequence, and then an elementary manipulation shows that $x_0$ is a Lebesgue point of $A(Du)$. 
Moreover, letting $\{\xi_{i}\} \subset\rn$ be the sequence of vectors satisfying $A(\xi_{i}) = (A(Du))_{B^{i}}$, the continuity of $A^{-1}$ implies that $\lim_{i\rightarrow\infty}\xi_{i}$ exists. 
We also have 
\begin{align*}
|\xi_{i} - (Du)_{B^{i}}| & \le \mean{B^{i}}|Du-\xi_{i}|\dx + \mean{B^{i}}|Du-(Du)_{B^{i}}|\dx \\
& \le c\mean{B^{i}}|Du-\xi_{i}|\dx \le c\left(\mean{B^{i}}|A(Du)-(A(Du))_{B^{i}}|\dx\right)^{1/(p-1)} \xrightarrow[i \rightarrow \infty]{} 0\,,
\end{align*}
thereby obtaining the existence of $\lim_{i\to\infty}(Du)_{B^{i}}$. 
This, after a standard manipulation, shows that $x_{0}$ is also a Lebesgue point of $Du$, and the continuity of $A$ implies
\[ A(Du(x_{0})) = A\left(\lim_{i\to\infty}(Du)_{B^{i}}\right) = A\left(\lim_{i\to\infty}\xi_{i}\right) = \lim_{i\to\infty}A(\xi_{i}) = A(Du)(x_{0}). \]

To show the pointwise estimate \eqref{theo:main-est1}, 
we let $k_{1} = 0$ and $k_{2}\rightarrow\infty$ in \eqref{ADu-pot-final}, which gives
\begin{align*} 
\left|A(Du(x_{0}))-(A(Du))_{B^{0}}\right| \le c\mean{B^{0}}|A(Du)-(A(Du))_{B^{0}}|\dx + cr_{0}^{\kappa}\lambda_{A}^{p-1} + \cI^{|\mu|}_{1}(x_{0},2r_{0})\,,
\end{align*}
where both $c \ge 1$ and $\kappa \in (0,1)$ are depending only on $\data$ and $\sigma$. 
Recalling the definition of $\lambda_{A}$ in \eqref{ADu-pot-2}, we complete the proof of Theorem \ref{theo:main} in the case $p \geq 2$.
\end{proof}

\subsection{The case $2-1/n < p < 2$}
Throughout this section, we assume that $2-1/n < p < 2$. 
\begin{proof}[Proof of Theorem~\ref{theo:main} {\it (ii)}] 
Let $\sigma \in (0,1)$ and, as in the proof of Theorem~\ref{theo:main}~{\it (i)} without loss of generality, we assume \eqref{sigma.add}.  Then we choose 
\[ m = \frac{1-\sigma}{4}\min\left\{\frac{1}{2-p},\frac{1}{p-1},1-s\right\}\qquad\text{ and } \qquad \varepsilon_{1} = \frac{m}{2} \]
so that both are depending only on $\data$ and $\sigma$. Recall that for $p<2$, due to their definitions in~\eqref{bar a def}, we have $\bar a_1=m$ and $\bar a_2=(1-m(2-p))/(p-1)$. In particular 
\[\frac{\sigma}{p-1} + \varepsilon_{1}< \frac{1-m(2-p)}{p-1} - \varepsilon_{1} = \bar a_2 - \varepsilon_{1}  \qquad \text{ and } \qquad \varepsilon_{1} < m=\bar a_1. \]
We choose $\tau = \tau(\data,\sigma)$ so small that
\begin{equation}\label{Du-pot-tau}
2c_{6} \tau^{\beta_{v}} \le 1, \quad  2 \tau^{\ve_{1}} \le 1, \quad 2^{1/(p-1)}c_{8}\tau^{\sigma/(p-1)} \le 1/4, \quad 2\tau^{[\sigma+(1-s)p-1]/(p-1)} \le 1\,.
\end{equation}
Then, recalling $R_{0} = R_{0}(\data,\sigma) \in (0,1)$ determined in Theorem~\ref{theo:homo-mixed-hol}, we choose $\bar{R} = \bar{R}(\data,\sigma) \in (0, R_{0})$ so small that
\begin{equation}\label{R.choice.le2}
\bar{R}^{\varepsilon_{1}/4} \le \tau^{9n}, \quad \bar{R}^{[1-m(2-p)]/(p-1)-\varepsilon_{1}} \le \tau^{9n}\bar{R}^{\sigma/(p-1)+\varepsilon_{1}}, \quad 2c_{8}\left[\tau^{-(n+p)}\bar{R}^{\sigma+(1-s)p-1}\right]^{1/(p-1)} \le 1/4\,.
\end{equation}
In the following, with $r \le \bar{R}/2$ being any radius such that $B_{2r}(x_{0}) \subset \Omega$, we consider a sequence of concentric balls
\[ B^i \coloneqq B_{r_i}(x_0), \quad r_i \coloneqq \tau^i r \qquad \text{for every }\ i \in \N \cup \{0\}. \]

Applying \eqref{Duexcess} to the concentric balls $B^{i+1} \subset B^{i}$, and then using \eqref{Du-pot-tau} and \eqref{R.choice.le2}, we have
\begin{align}\label{Du-pot-1}
 \mean{B^{i+1}}|Du-(Du)_{B^{i+1}}|\dx 
& \le \frac{1}{2}\mean{B^{i}}|Du-(Du)_{B^{i}}|\dx + \tau^{9n}r_{i}^{3\varepsilon_{1}/4}\mean{B^{i}}|Du|\dx \nonumber \\
& \quad + \tau^{9n}r_{i}^{\sigma/(p-1) + \varepsilon_{1}}\left[\frac{1}{r_{i}^{p'}}\tail(u-(u)_{B^{i}};r_{i})\right] \nonumber \\
& \quad + c_{7}\tau^{-n}\left[\frac{|\mu|(B^{i})}{r_{i}^{n-1}}\right]^{1/(p-1)} + c_{7}\tau^{-n}\left[\frac{|\mu|(B^{i})}{r_{i}^{n-1}}\right]\left(\mean{B^{i}}|Du|\dx\right)^{2-p}\,.
\end{align}
Take any numbers $k_{1} < k_{2}$. Summing up \eqref{Du-pot-1} over $i \in \{k_{1},\ldots,k_{2}-1\}$, we have
\begin{align*}
\sum_{i=k_{1}+1}^{k_{2}}\mean{B^{i}}|Du-(Du)_{B^{i}}|\dx 
& \le \frac{1}{2}\sum_{i=k_{1}}^{k_{2}-1}\mean{B^{i}}|Du-(Du)_{B^{i}}|\dx + \tau^{9n}\sum_{i=k_{1}}^{k_{2}-1}r_{i}^{3\varepsilon_{1}/4}\mean{B^{i}}|Du|\dx \\
& \quad + \tau^{9n}\sum_{i=k_{1}}^{k_{2}-1}r_{i}^{\sigma/(p-1)+\varepsilon_{1}}\left[\frac{1}{r_{i}^{p'}}\tail(u-(u)_{B^{i}};r_{i})\right] \\
& \quad + c_{7}\tau^{-n}\sum_{i=k_{1}}^{k_{2}-1}\left[\frac{|\mu|(B^{i})}{r_{i}^{n-1}}\right]^{1/(p-1)} + c_{7}\tau^{-n}\sum_{i=k_{1}}^{k_{2}-1}\left[\frac{|\mu|(B^{i})}{r_{i}^{n-1}}\right]\left(\mean{B^{i}}|Du|\dx\right)^{2-p}
\end{align*}
and therefore
\begin{align}\label{Du-pot-2}
\sum_{i=k_{1}}^{k_{2}}\mean{B^{i}}|Du-(Du)_{B^{i}}|\dx & \le 2\mean{B^{k_{1}}}|Du-(Du)_{B^{k_{1}}}|\dx + 2\tau^{9n}\sum_{i=k_{1}}^{k_{2}-1}r_{i}^{3\varepsilon_{1}/4}\mean{B^{i}}|Du|\dx \nonumber \\
& \quad +2\tau^{9n}\sum_{i=k_{1}}^{k_{2}-1} r_{i}^{\sigma/(p-1)+\varepsilon_{1}}\left[\frac{1}{r_{i}^{p'}}\tail(u-(u)_{B^{i}};r_{i})\right] \nonumber \\
& \quad + 2c_{7}\tau^{-n}\sum_{i=k_{1}}^{k_{2}-1}\left[\frac{|\mu|(B^{i})}{r_{i}^{n-1}}\right]^{1/(p-1)} + 2c_{7}\tau^{-n}\sum_{i=k_{1}}^{k_{2}-1}\left[\frac{|\mu|(B^{i})}{r_{i}^{n-1}}\right]\left(\mean{B^{i}}|Du|\dx\right)^{2-p}\,.
\end{align}

We now set
\begin{align}\label{lambda.u} 
\lambda_{u} & \coloneqq \tau^{-8n}\mean{B^{0}}|Du|\dx + \tau^{-8n}r_{0}^{\sigma/(p-1)}\left[\frac{1}{r_{0}^{p'}}\tail(u-(u)_{B^{0}};r_{0})\right] \nonumber \\
& \qquad + (2c_{7}\tau^{-8n})^{1/(p-1)}\left[\cI^{|\mu|}_{1}(x_{0},2r_{0})\right]^{1/(p-1)}
\end{align}
and recall \eqref{Riesz-sum}, which together with the fact that $1/(p-1) > 1$ implies
\begin{equation}\label{Riesz-sum2} 
\sum_{i=0}^{\infty} \left[\frac{|\mu|(B^{i})}{r_{i}^{n-1}}\right]^{1/(p-1)} \le \left[\sum_{i=0}^{\infty}\frac{|\mu|(B^{i})}{r_{i}^{n-1}}\right]^{1/(p-1)} \le \tau^{-n/(p-1)}\left[\cI^{|\mu|}_{1}(x_{0},2r_{0})\right]^{1/(p-1)} \le \left(\frac{\tau^{7n}}{2c_{7}}\right)^{1/(p-1)}\lambda_{u}\,.
\end{equation}
We claim that
\begin{equation}\label{Du-pot-ind1}
\sum_{i=0}^{k}\mean{B^{i}}|Du-(Du)_{B^{i}}|\dx \le \tau^{4n}\lambda_{u}\,,
\end{equation}
\begin{equation}\label{Du-pot-ind2}
\sum_{i=0}^{k}r_{i}^{\sigma/(p-1)}\left[\frac{1}{r_{i}^{p'}}\tail(u-(u)_{B^{i}};r_{i})\right] \le \lambda_{u}\,,
\end{equation}
and
\begin{equation}\label{Du-pot-ind3}
\mean{B^{i}}|Du|\dx \le \lambda_{u}
\end{equation}
hold for every $k \in \mathbb{N}\cup\{0\}$.
The proof again goes by the induction. It is straightforward to see that \eqref{Du-pot-ind1}--\eqref{Du-pot-ind3} hold for $i=0$. We now assume that they hold for every $i = 0, \ldots ,  k$, and then show that they hold for $i=k+1$. 
We first apply \eqref{Du-pot-2} with $k_{1}=0$ and $k_{2}=k+1$. Then, by the induction hypothesis along with $\eqref{Du-pot-tau}_{1}$, \eqref{lambda.u}, and \eqref{Riesz-sum2}, we deduce that
\begin{align*}
\lefteqn{ \sum_{i=0}^{k+1}\mean{B^{i}}|Du-(Du)_{B^{i}}|\dx }\\
& \le 2\mean{B^{0}}|Du-(Du)_{B^{i}}|\dx + 2\tau^{9n}\sum_{i=0}^{k}r_{i}^{3\varepsilon_{1}/4}\mean{B^{i}}|Du|\dx + 2\tau^{9n}\sum_{i=0}^{k}r_{i}^{\sigma/(p-1)+\varepsilon_{1}}\left[\frac{1}{r_{i}^{p'}}\tail(u-(u)_{B^{i}};r_{i})\right] \\
& \quad + 2c_{7}\tau^{-n}\sum_{i=0}^{k}\left[\frac{|\mu|(B^{i})}{r_{i}^{n-1}}\right]^{1/(p-1)} + 2c_{7}\tau^{-n}\sum_{i=0}^{k}\left[\frac{|\mu|(B^{i})}{r_{i}^{n-1}}\right]\left(\mean{B^{i}}|Du|\dx\right)^{2-p} \\
& \le 2\tau^{8n}\lambda_{u} + \frac{2\tau^{9n}r_{0}^{3\varepsilon_{1}/4}}{1-\tau^{3\varepsilon_{1}/4}}\lambda_{u} + 2\tau^{9n}\lambda_{u} + 2c_{7}\tau^{-n-n/(p-1)}\left[\cI^{|\mu|}_{1}(x_{0},2r_{0})\right]^{1/(p-1)} + 2c_{7}\tau^{-2n}\lambda_{u}^{2-p}\cI^{|\mu|}_{1}(x_{0},2r_{0}) \\
& \le \tau^{4n}\lambda_{u}\,.
\end{align*}
Next, a direct calculation shows 
\[
\mean{B^{k+1}}|Du|\dx \le \tau^{-n}\sum_{i=0}^{k}\mean{B^{i}}|Du-(Du)_{B^{i}}|\dx + \mean{B^{0}}|Du|\dx 
\le \tau^{3n}\lambda_{u} + \tau^{8n}\lambda_{u} \le \lambda_{u}\,.
\]
As for the tail, Lemma~\ref{lem:u-tail}, along with \eqref{Du-pot-ind2} and \eqref{Du-pot-ind3}, yields
\begin{align*}
\lefteqn{ \sum_{i=0}^{k+1} r_{i}^{\sigma/(p-1)} \left[ \frac{1}{r_{i}^{p'}} \tail(u-(u)_{B^{i}} ; r_{i}) \right] } \notag \\
& \leq r_{0}^{\sigma/(p-1)} \left[ \frac{1}{r_{0}^{p'}} \tail(u-(u)_{B^{0}} ; r_{0}) \right] \\
& \quad + c_{8} \sum_{i=0}^{k} \left[ 1 + r_{i}^{(1-s)p} \tau^{-(1-\alpha)(p-1) - 1} \right]^{1/(p-1)} r_{i+1}^{\sigma/(p-1)} \left[ \frac{1}{r_{i}^{p'}} \tail(u-(u)_{B^{i}} ; r_{i}) \right] \notag \\
& \quad + c_{8} \sum_{i=0}^{k} \left[ \tau^{-n-p} r_{i}^{\sigma + (1-s)p-1} \right]^{1/(p-1)} \left( \mean{\bk{i}}|Du| \dx + \left[ \frac{|\mu|(\bk{i})}{r_{i}^{n-1}} \right]^{1/(p-1)} \right) \notag \\
\overset{\eqref{R.choice.le2}_{3}}& {\leq} r_{0}^{\sigma/(p-1)} \left[ \frac{1}{r_{0}^{p'}} \tail(u-(u)_{B^{0}} ; r_{0}) \right] + 2^{1/(p-1)} c_{8}\tau^{\sigma/(p-1)}  \sum_{i=0}^{k} r_{i}^{\sigma/(p-1)} \left[ \frac{1}{r_{i}^{p'}} \tail(u-(u)_{B^{i}} ; r_{i}) \right] \\
& \quad + c_{8}\tau^{-(n+p)/(p-1)}\sum_{i=0}^{k} r_{i}^{[\sigma + (1-s)p - 1]/(p-1)}\mean{\bk{i}}|Du| \dx + \frac{1}{8}\sum_{i=0}^{k} \left[ \frac{|\mu|(\bk{i})}{r_{i}^{n-1}} \right]^{1/(p-1)} \notag \\
\overset{\eqref{R.choice.le2}_{3},\eqref{lambda.u},\eqref{Riesz-sum2}}& {\leq} \lambda_{u}\,.
\end{align*}
Consequently, we have proved that \eqref{Du-pot-ind1}--\eqref{Du-pot-ind3} hold for every $k \in \mathbb{N}\cup\{0\}$.

From the uniform bound \eqref{Du-pot-ind1}, 
it follows that $Du$ is VMO at $x_{0}$. 
We now proceed as
\begin{align*}
\lefteqn{ | (Du)_{B^{k_{1}}} - (Du)_{B^{k_{2}}} | \le \sum_{i=k_{1}}^{k_{2}-1}| (Du)_{B^{i}} -(Du)_{B^{i+1}}| 
\le \sum_{i=k_{1}}^{k_{2}-1} \tau^{-n}\mean{B^{i}}|Du-(Du)_{B^{i}}|\dx } \nonumber \\
\overset{\eqref{Du-pot-2}}&{\le} c\mean{B^{k_{1}}}|Du-(Du)_{B^{k_{1}}}|\dx + c\lambda_{u}\sum_{i=k_{1}}^{k_{2}-2}r_{i}^{3\ve_{1}/4}  + c\sum_{i=k_{1}}^{k_{2}-2}r_{i}^{\sigma/(p-1)+\varepsilon_{1}}\left[\frac{1}{r_{i}^{p'}}\tail(u-(u)_{B^{i}};r_{i})\right] \nonumber \\
& \quad + c\sum_{i=k_{1}}^{k_{2}-2}\left[\frac{|\mu|(B^{i})}{r_{i}^{n-1}}\right]^{1/(p-1)} + c\sum_{i=k_{1}}^{k_{2}-2}\left[\frac{|\mu|(B^{i})}{r_{i}^{n-1}}\right]\left(\mean{B^{i}}|Du|\dx\right)^{2-p}\,.
\end{align*}
Then the fact that $Du$ is VMO at $x_{0}$, \eqref{Du-pot-ind2}, \eqref{Du-pot-ind3} and the assumption of the theorem imply that $\{(Du)_{B^{i}}\}$ is a Cauchy sequence. Hence, $x_{0}$ is a Lebesgue point of $Du$.

Finally, we let $k_{1}=0$ and $k_{2} \rightarrow \infty$ in the last display. Then, recalling the definition of $\lambda_{u}$ given in \eqref{lambda.u}, we conclude with \eqref{theo:main-est2}:
\begin{align*}
|Du(x_{0})-(Du)_{B^{0}}|
& \le c\mean{B^{0}}|Du-(Du)_{B^{0}}|\dx + cr_{0}^{3 \ve_{1}/4}\lambda_{u} + c\left[\cI^{|\mu|}_{1}(x_{0},2r_{0})\right]^{1/(p-1)} + c\lambda_{u}^{2-p}\cI^{|\mu|}_{1}(x_{0},2r_{0}) \\
& \le c\mean{B^{0}}|Du-(Du)_{B^{0}}|\dx + cr_{0}^{3 \ve_{1}/4}\mean{B^{0}}|Du|\dx + r_{0}^{\sigma/(p-1)}\left[\frac{1}{r_{0}^{p'}}\tail(u-(u)_{B^{0}};,r_{0})\right] \\
& \quad + c\left[\cI^{|\mu|}_{1}(x_{0},2r_{0})\right]^{1/(p-1)} + c\left[\cI^{|\mu|}_{1}(x_{0},2r_{0})\right]\left(\mean{B^{0}}|Du|\dx\right)^{2-p}\,. \qedhere
\end{align*}
\end{proof}

\begin{rem}
\rm
In our analysis, we need to control the summation of nonlocal tail terms over a sequence of concentric balls to show that $A(Du)$ or $Du$ is VMO at $x_{0}$. 
Moreover, in the proof of \eqref{ADu-pot-ind2} and \eqref{Du-pot-ind2}, we already have used $\cI^{|\mu|}_{1}(x_{0},2r)$ on the right-hand side of the inequality. 
However, in the local case, one can show a similar VMO-regularity under the weaker condition
\[ \lim_{\rho\rightarrow0}\frac{|\mu|(B_{\rho}(x_{0}))}{\rho^{n-1}} = 0 \]
instead of the $\cI^{|\mu|}_{1}(x_{0},2r)<\infty$. See for instance \cite[Theorem~19]{KuMiguide}.
\end{rem}

{\subsection{Proof of the continuity criterion}
\begin{proof}[Proof of Corollary~\ref{coro:cont}]
We start by choosing $\sigma = \sigma(s,p) \in (0,1)$ close enough to 1 in order to satisfy $\sigma+p-1-sp > 0$. 
We first consider the case $2-1/n < p < 2$, and aim to show that for every $\delta>0$ and $x_{0} \in B$, there exists a positive radius $r_{\delta} < \mathrm{dist}(B,\partial\Omega)$ such that
\begin{equation}\label{C1} 
\osc_{B_{r_{\delta}}(x_{0})}Du < \delta.
\end{equation}
Let $t$ be so small that $B_{2t}(x_{0}) \subset B$. We take any point $x_{1} \in B_{t}(x_{0})$.  Then $B_{t}(x_{1}) \subset B_{2t}(x_{0})$. Note that Theorem~\ref{theo:main} and the assumption of the corollary imply that both $x_{0}$ and $x_{1}$ are Lebesgue points of $Du$. Also, a straightforward calculation gives that
\begin{align*}
\mean{B_{t}(x_{1})}|Du-(Du)_{B_{t}(x_{1})}|\dx \le c\mean{B_{2t}(x_{0})}|Du-(Du)_{B_{2t}(x_{0})}|\dx
\end{align*}
and
\begin{align*}
\lefteqn{ \int_{\mathbb{R}^{n}\setminus B_{t}(x_{1})}\frac{|u(y)-(u)_{B_{t}(x_{1})}|^{p-1}}{|y-x_{1}|^{n+sp}}\dy }\\
& \le c\int_{\mathbb{R}^{n}\setminus B_{2t}(x_{0})}\frac{|u(y)-(u)_{B_{2t}(x_{0})}|^{p-1}}{|y-x_{0}|^{n+sp}}\dy + ct^{p-1-sp}\left(\mean{B_{2t}(x_{0})}|Du|\dx\right)^{p-1}.
\end{align*}

We apply \eqref{theo:main-est2} to the balls $B_{2t}(x_{0})$ and $B_{t}(x_{1})$. Then, taking into account the above observations, we have
\begin{align}\label{Du-cont}
|Du(x_{1})-Du(x_{0})| 
& \le |Du(x_{0})-(Du)_{B_{2t}(x_{0})}| + |Du(x_{1})-(Du)_{B_{t}(x_{1})}| + |(Du)_{B_{2t}(x_{0})} - (Du)_{B_{t}(x_{1})}| \notag \\
& \le c\left[\cI^{|\mu|}_{1}(x_{0},2t)\right]^{1/(p-1)} + c\left[\cI^{|\mu|}_{1}(x_{0},2t)\right]\left(\mean{B_{2t}(x_{0})}|Du|\dx\right)^{2-p} \notag \\
& \quad + c\left[\cI^{|\mu|}_{1}(x_{1},t)\right]^{1/(p-1)} + c\left[\cI^{|\mu|}_{1}(x_{1},t)\right]\left(\mean{B_{2t}(x_{0})}|Du|\dx\right)^{2-p} \notag \\
& \quad + c\mean{B_{2t}(x_{0})}|Du-(Du)_{B_{2t}(x_{0})}|\dx + ct^{\tilde{\kappa}}\mean{B_{2t}(x_{0})}|Du|\dx \notag \\
& \quad + c(2t)^{\sigma/(p-1)}\left[\frac{1}{(2t)^{p'}}\tail(u-(u)_{B_{2t}(x_{0})};x_{0},2t)\right]\,,
\end{align}
where $c \ge 1$ and $\tilde{\kappa} = \min\{\kappa,(\sigma+p-1-sp)/(p-1)\} \in (0,1)$ are depending only on $\data$. 
Moreover, revisiting the settings in the proof of Theorem~\ref{theo:main} and recalling \eqref{Du-pot-ind2}, we have in particular
\[ \lim_{i\to\infty}r_{i}^{\sigma/(p-1)}\left[\frac{1}{r_{i}^{p'}}\tail(u-(u)_{B^{i}};r_{i})\right] = 0 \,, \]
which, with an elementary manipulation, implies
\[ \lim_{\varrho\to0}\varrho^{\sigma/(p-1)}\left[\frac{1}{\varrho^{p'}}\tail(u-(u)_{B_{\varrho}(x_{0})};x_{0},\varrho)\right] = 0\,. \]
We also have that $Du$ is VMO at $x_{0}$. 
Consequently, for any $\delta > 0$, we can choose $t>0$ so small that the right-hand side of \eqref{Du-cont} is less than $\delta$. Taking $r_{\delta}$ to be this value of $t$, we conclude with \eqref{C1}. 

The proof for $p \ge 2$ is essentially the same as above, so we omit to present details. In this case, we can obtain similar estimates in terms of $A(Du)$, using \eqref{theo:main-est1}, \eqref{ADu-pot-ind2}, and the fact that $A(Du)$ is VMO at $x_{0}$.  
\end{proof} 
}

\section*{Acknowledgements}
I.~Chlebicka is supported by NCN grant no. 2019/34/E/ST1/00120. K.~Song is supported by a KIAS individual grant (MG091701) at the Korea Institute for Advanced Study. Y.~Youn is supported by the National Research Foundation, NRF-2020R1C1C1A01009760. A.~Zatorska-Goldstein is supported by NCN grant no. 2019/33/B/ST1/00535.

\section*{Declarations}

\subsection*{Conflict of interest} The authors declare that they have no conflict of interest.

\subsection*{Data Availability}
Data sharing not applicable to this article as no datasets were generated or analysed during
the current study.

\end{document}